\newcommand{\R}{\mathbb R}
\newcommand{\E}{\mathbb E}
\newcommand{\p}{\mathbb P}
\newcommand{\Var}{\mathrm{Var}}
\newcommand{\me}{\medskip \noindent}
\newcommand{\bi}{\bigskip \noindent}
\newcommand{\un}{\mbox{\large$\mathbbm{1}$}}
\newcommand{\e}{\mathsmaller{E}}
\newcommand{\D}{\mathsmaller{D}}
\def\bl{ \textcolor{blue}}
\def\be{\begin{eqnarray}}
\def\ee{\end{eqnarray}}
\def\ben{\begin{eqnarray*}}
\def\een{\end{eqnarray*}}
\newtheorem{prop}{Proposition}[section]
\newtheorem*{assumptionA}{Assumption A}
\newtheorem{lem}[prop]{Lemma}
\newtheorem{thm}[prop]{Theorem}
\newtheorem{rem}[prop]{Remark}
\newtheorem{cor}[prop]{Corollary}
\newtheorem*{assumptionA1}{Assumption A1}
\newtheorem*{assumptionA2}{Assumption A2}
\newtheorem*{assumptionA2'}{Assumption A2'}
\newtheorem*{assumptionA3}{Assumption A3}
\newtheorem*{assumptionA1'}{Assumption A1'}
\newcounter{example}
\title{Scaling limits of  population and evolution  processes  in random environment}
\author{Vincent Bansaye\thanks{CMAP, Ecole Polytechnique, CNRS, route de Saclay, 91128 Palaiseau Cedex-France; E-mail: \href{mailto:vincent.bansaye@polytechnique.edu}{\texttt{vincent.bansaye@polytechnique.edu} }},
    Maria-Emilia Caballero\thanks{UNAM; E-mail: \href{mailto:mariaemica@gmail.com}{\texttt{mariaemica@gmail.com}}},  Sylvie M\'el\'eard\thanks{CMAP, Ecole Polytechnique, CNRS, route de
   Saclay, 91128 Palaiseau Cedex-France; E-mail: \href{mailto:sylvie.meleard@polytechnique.edu}{\texttt{sylvie.meleard@polytechnique.edu}}}}
\begin{document}

\maketitle

\begin{abstract} Our motivation comes from the large population approximation of individual based models in population dynamics and population genetics. We propose a general method to investigate scaling limits of finite dimensional   Markov chains   to diffusions with jumps. 
The results of tightness, identification and convergence in law  are based on the convergence  of  suitable  characteristics of the  chain transition. They strongly exploit the structure of the population processes recursively defined  as sums of independent random variables.  We develop  two main applications.
First, we extend the classical Wright-Fisher diffusion approximation to independent and identically distributed random environments.
  Second, we  obtain the convergence in law of generalized Galton-Watson processes with interaction  in random environment to  the solution of  stochastic differential equations with jumps. \end{abstract}

\noindent\emph{Key words:} Tightness, diffusions with jumps, characteristics, semimartingales, Galton-Watson process, Wright-Fisher process, random environment \\

\medskip

\noindent\emph{MSC 2010:} 60J27, 60J75, 60F15, 60F05, 60F10, 92D25.
\setcounter{tocdepth}{2} 

\section{Introduction}
This work is a contribution to the study of  scaling limits of discrete population models. The parameter  $N\in \mathbb N$ will scale  the population sizes. The population processes  $(Z_n^N : n\in \mathbb N)$  are $\mathbb{N}^d$-valued Markov chains inductively    defined by  
$$Z^{N}_{n+1}=\sum_{j=1}^{F_N(Z^N_n)} L^N_{j,n}(Z^N_n,E^N_n),$$
where $F_N$ is a function giving the number of individual events. For each $z,e,N$,  $(L^N_{i,n}(z,e) : i,n \geq 1)$ is a family of independent identically distributed random variables
and $E^N_n$ is a $\R^d$-random variable describing the environment at generation $n$. This class of processes includes well known processes in population dynamics and population genetics. In particular,  
Galton-Watson processes correspond to $F_N(z)=z$ and $L^N_{i,n}(z,e)$ does not depend on  $(z,e)$, while Wright-Fisher processes  are obtained with    $F_N(z)=N$ and $L^N_{i,n}(z,e)$ Bernoulli random variables with  parameter $z/N$. This class of population models  can also take into account the effect of random environment and  include many additional ecological forces such as competition,  cooperation and  sexual reproduction. 

\me In this paper we are interested in the convergence of the sequence of  processes $(Z_{[v_Nt]}^N/N : t\geq 0)$,  as $N$ tends to infinity, $v_N$ being   a  time scale tending to infinity with $N$. We  provide a unified framework adapted to population models and  characterize this convergence by asymptotic properties relying on $v_{N}, F_{N}$ and $L^N$.  Many works have been devoted to the approximation of  Markov processes. They are essentially based on tightness argument and identification of the martingale problem, see for example \cite{EK, Jacod}. Unfortunately, this general method does not satisfactorily apply to our framework since the required  assumptions are difficult to check. Applying for instance  this  method to  the classical Galton-Watson framework leads to moment assumptions.  However, it is well known from the works of Lamperti \cite{Lamp, Lamp2} and Grimvall \cite{Grimvall} that  the finite dimensional convergence  of 
the renormalized processes $(Z_{[v_Nt]}^N/N : t\geq 0)$ with a  time scale $v_N\rightarrow \infty$ is equivalent to the convergence of a characteristic triplet associated with $(v_{N},L^N)$  when $N$ tends to infinity.  In this case, the sequence of  processes  $(Z_{[v_Nt]}^N/N : t\geq 0)$ converges as $N\rightarrow \infty$ to a Continuous State Branching Process (CSBP) defined as the unique strong solution of a Stochastic Differential Equation (SDE). The parameters of this SDE are given by the limiting characteristic triplet of $(v_{N},L^N)$. 
Note that the  proof is based on the branching property, using either   the Laplace exponent \cite{Grimvall}, or the relation with  the convergence of the associated random walk to a spectrally positive L\'evy process via  a Lamperti time change (cf. \cite{Lamp2} \cite{CLU}). Lamperti has also introduced a powerful transform  in the stable framework, see e.g. \cite{Lamp2}  and   \cite{Li} and  \cite{BK}.
Other time changes have been successfully used to obtain scaling limits of discrete processes, in particular  for some diffusion approximations, see for  instance \cite{Kurtz78} for branching
processes in random environment, \cite{CPU} for branching processes with immigration  and  \cite{Rosen} for controlled branching processes , amongst others.
Such time change  techniques seem essentially restricted to branching processes or stable processes or diffusion approximations. We are here interested in the convergence in law of  discrete Markov processes  $(Z^N)_{N}$ which do not  enjoy the branching property and may jump in the limit. 
The limiting processes may even be explosive and are not necessarily stable. \\

It is well known that the law of  the process $(Z^N_{[v_N.]}/N)$  is determined  by its initial law and  the family of functions
$$x\rightarrow {\cal G}^N_x(H)=v_N\E\left( H(Z^N_1/N - x )| Z^N_0= Nx\right)$$
   for $H$ continuous and bounded  on $\mathbb{R}^d$. 
Moreover, the asymptotic behavior of $ {\cal G}^N_x(H)$ as $N\rightarrow \infty$ for a  large enough class of functions $H$  captures the convergence of the  processes. 
 In such discrete case, Jacod  and Shiryaev  in \cite[II.3, IX]{Jacod} prove that the   
  tightness  and the identification are deduced from the convergence as $N\rightarrow \infty$ of the characteristics of the semimartingales 
$$\sum_{i\leq [v_Nt]}  {\cal G}^N_{Z^N_i/N} (H)$$
defined for certain  functions $H$: a truncation (vector) function and its squares and a determining class of smooth functions vanishing in a neighborhood of $0$. Let us  mention  for example \cite{Kosenkova} where the convergence of Markov chains to a L\'evy driven SDE is shown. 
This strategy is unfortunately difficult to apply in our framework, even for Galton-Watson processes. We prove in this paper that the functions $H$ can be chosen  differently, belonging to some (rich enough) functional space ${\cal H}$,  dense in the set of regular functions vanishing at zero for a norm equivalent to $$\| H\| = \sup_{u\in\mathbb{R}^d}\Big\|  {H(u)\over 1\wedge |u|^2} \Big\|_{\infty}.$$
The  choice of the space ${\cal H}$ will depend on  the assumptions on the model. In our applications, we   exploit  the independence property of the variables $(L^N_{i,.}(.,.) : i \geq 1)$. We  plug at the level of the conditional increments the characterization of the law by the Laplace exponent, which is well adapted to the sum of non-negative independent random variables.\\

Our main motivations were   the famous frameworks of population genetics and dynamics.
We show   the efficiency of our method on extensions of  Wright-Fisher and Galton-Watson chains. We first study a  Wright-Fisher model with selection in a random environment  impacting the selective advantage. 
  The environments  are assumed to be independent and identically distributed and  the associated random walk converges to a L\'evy process. We obtain the convergence of the joint law  of the processes and   random walks, by using the functional space 
 $${\cal H}=\{ (u,w) \in [-1,1]\times (-1,\infty) \rightarrow  1 - e^{- k u -\ell w}  \, ; \,  k,   \ell \geq 0\}.$$
 We thus derive a  diffusion with jumps in random environment, which generalizes the Wright-Fisher diffusion with selection and takes into account  small random fluctuations and punctual dramatic advantages in the selective effects.\\
   The second application focuses on generalized Galton-Watson processes with  reproduction law both density dependent and environment dependent.  
 We obtain a result of convergence in law to the so called \emph{CSBP with interaction in L\'evy environment} (introduced in \cite{PP, ZL}). For such processes, characteristics are unbounded  and the result is deduced  from the convergence of the compactified processes
$$\exp(-Z_k^N/N).$$
To deal with the joint laws of the latter and  the environment random walk, we use the  space of functions from  $[-1,1]\times (-1,\infty)$ to $\R$ defined by
$$\mathcal H=\{ (v,w) \rightarrow v^k\exp(-\ell w): k\geq 1, \ell \geq 0 \} \cup \{ (v,w) \rightarrow 1-\exp(-\ell w) :  \ell \geq 1\}.$$
Our results extend the criterion for the convergence of a sequence of Galton-Watson processes and the results we know in  random environment  \cite{Kurtz78, BS}
or with interactions  \cite{PD, Pardoux}. \\
They are further applied to Galton-Watson processes  with cooperation and to branching processes  with logistic growth in random environment. \\

 {\bf{Organization of the paper}}\\ 
 In Section \ref{gene}, we give general results for  the tightness, the identification and the convergence in law of a scaled Markov process to a diffusion with jumps in $\R^d$. The functional space ${\cal H}$ is introduced in Section \ref{funct}.
Tightness and identification results are stated in Section \ref{statgen}
by assuming 
 the uniform convergence and boundedness of characteristics  ${\cal G}^N_.(H)$ for any $H\in {\cal H}$. Convergence requires an additional uniqueness assumption,   obtained from pathwise uniqueness in the applications, using standard techniques for non-negative SDE  \cite{IK, FL}. Proofs of these general statements are given in Section \ref{proofgene}. In Section \ref{WF}, we apply our method  to a Wright-Fisher model with selection in a random environment. We  obtain in a suitable scaling limit a Wright-Fisher diffusion in random environment for which we prove uniqueness of solution.   In Section \ref{CSBPLEI} (Sections 4.1, 4.2, 4.3), we apply our method  to  Galton-Watson processes with  reproduction law both density dependent and environment dependent. Section 4.4 is devoted to explosive  CSBP with interaction in random environment. In particular, we consider Galton-Watson processes with cooperative effects.  Section 4.5 is dedicated to the conservative case and an application to Galton-Watson processes with logistic competition and small environmental fluctuations is studied.  Finally, 
we expect  the method to be applied in various contexts, in particular    for structured  populations models
with sexual reproduction, competition or cooperation, see Section \ref{Perspect}. \\

\noindent {\bf Notation}\\
For $x\in \R^{d}$, we denote  by $|x|$ the euclidian norm of $x$. If $A\subset \mathbb{R}^d$,  $\overline{A}$ is the closure of $A$ in $\mathbb{R}^d$.
\\
The functional norms are denoted by $\|.\|$. In particular the sup norm of a bounded function $f$ on a set ${\cal U}$ is denoted by $\|f\|_{{\cal U}, \infty}$. The sets $C_{b}({\cal U}, \mathbb{R})$ and  $C_{c}({\cal U}, \mathbb{R})$ denote the spaces of continuous real functions defined on ${\cal U}$ respectively  bounded and with compact support.  \\
As usual, we write 
 $h(u)=o(g(u))$ (resp. $h(u)\sim g(u)$) when $h(u)/g(u)$ tends to $0$  (resp. to $1$) as $u$ tends  to $0$.    $Id$ denotes the identity function.\\ 
For any $\mathcal U$    subset of $\mathbb{R}^d$  containing  a neighborhood of $0$, we define $\mathcal U^*$ as $\mathcal U\setminus \{0\}$.

\section{A criterion for tightness and convergence in law}
\label{gene}

Let  ${\cal X}$   be  a Borel subset of $\mathbb{R}^d$ and $\mathcal U$   be a closed subset of $\mathbb{R}^d$  containing  a neighborhood of $0$. 

\me 
Let us introduce a scaling parameter $N\geq 1$ .  For any $N$, we consider 
a discrete time ${\cal X}$-valued   Markov chain $(X^N_{k} :  k\in \mathbb{N})$ satisfying
for any $k\geq 0$,
$$ {\cal L}(X^N_{k+1} \,| \, X^N_{k}= x) = {\cal L}(F^N_{x}),$$
 where for any $N\in \mathbb N$,  $(F^N_{x}, x \in {\cal X})$ denotes a measurable family of ${\cal X}$-valued random variables    such that 
 for any $x\in  {\cal X}$, the random variable $F^N_{x} - x$ takes values in   ${\cal U}$.

\me The natural filtration of the process $X^N$  is denoted by
$({\cal F}^N_{k})_k.$ Note that the increments $X^N_{k+1} - X^N_{k}$ take values in $\mathcal U$. 

\me Our aim is the characterization of the convergence in law of the sequence of processes $\,(X^N_{[v_{N}.]}, N\in \mathbb{N})$, where $(v_{N})_{N}$ is a given sequence of positive real numbers going to infinity when $N$ tends to infinity. It is based on the criteria for   tightness and identification of semimartingales by use of characteristics    given in   \cite[IX]{Jacod}, which 
consists in studying the asymptotic behavior of
\be\label{GNx}
{\cal G}_{x}^N(H) = v_{N}\, \E\big(H(F^N_{x}-x)\big) = v_{N}\, \E\big(H(X^N_{k+1} - X^N_{k})\,| \, X^N_{k}= x\big),\ee
for real valued bounded measurable functions $H$ defined on ${\cal U}$.

\bi  {\bf Hypothesis (H0)}
 \emph{ We first assume that the family of random variables $(F^N_{x})_{N,x}$  satisfies
\ben
\lim_{b\to \infty} \sup_{x\in {\cal X}, N\in \mathbb{N}^*} {\cal G}_{x}^N\big(\un_{B(0,b)^c} \big)= 0. 
\een}

This hypothesis avoids to get infinite jumps in the limit. We will see in the examples  that this condition affects both the population and the environment dynamics.

\me Under  {\bf (H0)}, we will prove that the study of \eqref{GNx} can be reduced to a rich enough and tractable  subclass ${\cal H}$ of functions $H$. The choice of ${\cal H}$
 depends on the particular models and is illustrated in the examples.

\subsection{Specific and truncation functions}
\label{funct}
We  consider a closed subset $\mathcal U$ of $ \R^d$ containing a neighborhood of $0$ and   introduce the functional space
$$C_{b,0}^2 = C^2_{b,0}({\cal U}, \mathbb{R})=  \left\{ H\in C_{b}( {\cal U}, \mathbb{R}) \, :  \, H(u) = \sum_{i=1}^d \alpha_{i}u_{i} + \sum_{i,j=1}^d \beta_{i,j} u_{i} u_{j}+ o(|u|^2), \, \alpha_{i}, \beta_{i,j} \in \R \right\}.$$

The  functions of   $C^2_{b,0}$ can  be decomposed in a similar way with respect to any smooth function which behaves like the  identity at $0$, 
as stated in the next lemma. The proof uses the uniqueness of the second order Taylor expansion in a neighborhood of $0$.
 \begin{lem}
\label{lem-decomp}
Let  $f=(f^1, \ldots, f^d) \in (C^2_{b,0})^d$ such that $f^i(u)=u_i(1+o(\vert u\vert))$ for $i=1, \ldots, d$. For any 
$H\in C^2_{b,0}$,  there exists a unique decomposition of the form
$$ H = \sum_{i=1}^d \alpha^f_{i}(H) f^{i} + \sum_{i,j=1}^d \beta^f_{i,j}(H)  f^{i} f^{j}+\overline{H}^f,$$ where
 $\overline{H}^f= o(|f|^2)$ is a continuous and bounded function and $\alpha_{i}^f(H)$, $\beta_{i,j}^f(H)$, $i,j=1 \cdots d$ are real coefficients and $\beta^f$ is a symmetric matrix.
 \end{lem}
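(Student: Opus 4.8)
The plan is to reduce the statement to a purely algebraic fact about second-order Taylor expansions at $0$, together with a bookkeeping step to check that the remainder is bounded and continuous on all of $\mathcal U$. First I would observe that since $H \in C^2_{b,0}$, by definition we may write $H(u) = \sum_i \alpha_i u_i + \sum_{i,j} \beta_{i,j} u_i u_j + o(|u|^2)$ with a symmetric choice of $\beta$ (symmetrize if necessary, since $u_i u_j = u_j u_i$). Similarly each $f^i \in C^2_{b,0}$ with $f^i(u) = u_i + o(|u|)$; I would extract the next order, writing $f^i(u) = u_i + \sum_{j,k} \gamma^i_{j,k} u_j u_k + o(|u|^2)$ for suitable real $\gamma^i_{j,k}$. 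The key computation is then to expand the candidate combination $\sum_i \alpha^f_i f^i + \sum_{i,j}\beta^f_{i,j} f^i f^j$ to second order at $0$: the linear part is $\sum_i \alpha^f_i u_i$, and the quadratic part is $\sum_i \alpha^f_i \big(\sum_{j,k}\gamma^i_{j,k} u_j u_k\big) + \sum_{i,j}\beta^f_{i,j} u_i u_j$ (note $f^i f^j = u_i u_j + o(|u|^2)$).

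Next I would match coefficients. Matching the linear terms forces $\alpha^f_i(H) = \alpha_i$ for all $i$, with no freedom. Matching the quadratic terms forces $\sum_{i,j} \beta^f_{i,j} u_i u_j = \sum_{i,j}\beta_{i,j} u_i u_j - \sum_i \alpha_i \sum_{j,k}\gamma^i_{j,k} u_j u_k$ as quadratic forms; since a quadratic form determines its symmetric coefficient matrix uniquely, this pins down a unique symmetric matrix $\beta^f(H)$. This establishes both existence and uniqueness of the coefficients $\alpha^f_i(H)$ and $\beta^f_{i,j}(H)$, which is exactly the point where "uniqueness of the second order Taylor expansion in a neighborhood of $0$", invoked in the statement, is used. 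One then defines $\overline{H}^f := H - \sum_i \alpha^f_i(H) f^i - \sum_{i,j}\beta^f_{i,j}(H) f^i f^j$ by this formula.

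It remains to verify that $\overline{H}^f$ has the claimed properties. Continuity and boundedness on $\mathcal U$ are immediate: $H$, the $f^i$, and hence the finitely many products $f^i f^j$ all lie in $C_b(\mathcal U, \mathbb R)$, so $\overline{H}^f$ is a finite linear combination of bounded continuous functions. For the local behavior, by construction the second-order Taylor expansions at $0$ of $H$ and of $\sum_i \alpha^f_i f^i + \sum_{i,j}\beta^f_{i,j} f^i f^j$ agree, so $\overline{H}^f(u) = o(|u|^2)$; and since $|f(u)| = |u|(1+o(1))$ near $0$, we have $o(|u|^2) = o(|f(u)|^2)$, giving $\overline{H}^f = o(|f|^2)$ as required. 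I do not expect any genuine obstacle here; the only mild care needed is the symmetrization of $\beta^f$ and the routine check that passing from $o(|u|^2)$ to $o(|f(u)|^2)$ is legitimate using $f^i(u) \sim u_i$.
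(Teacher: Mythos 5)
Your proof is correct and follows essentially the same route the paper indicates, namely existence and uniqueness via the uniqueness of the second-order Taylor expansion at $0$, with the routine check that the remainder is bounded, continuous, and that $o(|u|^2)=o(|f(u)|^2)$ since $|f(u)|\sim|u|$. (Minor remark: the hypothesis $f^i(u)=u_i(1+o(|u|))$ actually forces the quadratic coefficients $\gamma^i_{j,k}$ in your expansion of $f^i$ to vanish, so your bookkeeping is slightly more general than needed but harmless.)
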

  \me We introduce  
 \begin{itemize}  
 \item   the  {\bf{specific function}}  $h$ which  satisfies
  \be \label{truncation} h = (h^1, \cdots,  h^d) \in  (C^2_{b,0})^d\ ;\ h^i(u)=  u_{i}(1+o(u))\ ;\ h^i(u)\neq 0 \ {\rm for }\  u  \neq 0 \quad (i=1, \ldots,d).\ee
\item  the  {\bf{truncation function}}  $h_0$, as defined in \cite{Jacod}  :
   \be
\label{troncationjacod}
h_{0}=(h_{0}^1, \cdots, h_{0}^d)\in C_b({\cal U}, \R^d), \quad  h_{0}(u)=u \  \text{ in a  neighborhood of }  \ 0.
\ee
 Obviously,  $h_0^i h_0^j\in C^2_{b,0}$ for any $ i,j=1,\ldots,  d$.  
\end{itemize}
Note that in general a specific function is not a truncation function since it may not coincide with the identity function in a neighborhood of $0$. Its choice will be driven by the processes we are  considering. We  will give different choices of functions $h$ in the next sections, for instance
$h(x)=1-\exp(-x)$ on $[-1,\infty)$ when $d=1$. These specific functions will play a crucial   role in the whole paper.

\subsection{General statements}

 \bi We introduce a functional  space $\mathcal H$
 containing the coordinates of  the specific function $h$ and their square products and  which "generates" the continuous functions with compact support in ${\cal U}$ in the sense described below. 
 The space $\mathcal H$ will be a convergence determining class.

 
 \me {\bf Hypotheses (H1)}
 There exists a functional space $\mathcal H$ such that 
 \begin{enumerate}
 \item   \emph{${\cal  H}$ is a subset of  $C^2_{b,0}$ and $\, h^i, h^ih^j\, \in Vect( {\cal H})$  for $i,j = 1, \ldots, d$.}
\item  \emph{For any  $ g\in C_{c}({\cal U}, \mathbb{R})$ with $g(0) = 0$, there exists a sequence $(g_{n})_{n}  \in C^2_{b,0}$ such that $\,\lim_{n \to \infty} \|g-g_{n}\|_{\infty, {\cal U} }= 0$
and $|h|^2\, g_n \in Vect({\cal H})$.} 

 \item    \emph{There exists a family of real numbers $\,({\cal G}_{x} H ;  x\in {\cal X}, H \in {\cal  H})$ such that for any $H \in {\cal  H}$}, 
 \ben
(i) && \qquad \lim_{N\to \infty}\sup_{x\in {\cal X}} \left| {\cal G}_{x}^N(H) - {\cal G}_{x}(H) \right| = 0. \qquad \qquad\\
(ii) &&  \qquad 
\sup_{x\in {\cal X}} \left|  {\cal G}_{x}(H) \right|  < + \infty.
\een
\end{enumerate}

\me \begin{rem} In the examples of the next sections,   {\bf (H1.2)} is proved with the use of the locally compact version of the Stone-Weierstrass Theorem.
We refer to the  Appendix for a precise statement. \\
Contrary to the "convergence determining class" of \cite{Jacod},  the functions of ${\cal H}$  will not be vanishing (or $o(u^2)$) in a neighborhood of $0$.\\
Hypothesis ${\bf (H1.3)}$ implies that the map $ x\in  {\cal X} \rightarrow{\cal G}_{x}(H) $ is measurable and  bounded for any $\,H\in {\cal H}$.
\end{rem}

\label{statgen}


We first obtain a tightness result based on the space ${\cal H}$ of test functions.

 \begin{thm}
\label{thm-tightness} Assume that the sequence $(X_0^N)_N$  is tight in $\overline{{\cal X}}$
and that {\bf (H0)} and ${\bf (H1)}$  hold. Then the  sequence of processes $\,(X^N_{[v_{N}.]}, N\in \mathbb{N})$ is tight in $\mathbb{D}([0,\infty), \overline{{\cal X}})$.
\end{thm}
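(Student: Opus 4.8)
The plan is to invoke the standard tightness criterion for semimartingales via characteristics, as in Jacod–Shiryaev \cite[VI.4--5, IX.3]{Jacod}, but feeding it the test functions from $\mathcal H$ rather than the classical truncation-based convergence-determining class. First I would fix a truncation function $h_0$ as in \eqref{troncationjacod} and, using Lemma~\ref{lem-decomp} applied with $f=h$ (the specific function), write the semimartingale decomposition of $X^N_{[v_N\cdot]}$: the process is a pure-jump semimartingale whose characteristics $(B^N, C^N, \nu^N)$ relative to $h_0$ are explicitly
$$B^N_t=\sum_{i\le [v_Nt]}\E\big(h_0(X^N_{i+1}-X^N_i)\,\big|\,\mathcal F^N_i\big),\qquad \nu^N([0,t]\times\mathrm dy)=\sum_{i\le[v_Nt]}\p\big(X^N_{i+1}-X^N_i\in\mathrm dy\,\big|\,\mathcal F^N_i\big),$$
and $C^N\equiv 0$, while the modified second characteristic $\widetilde C^N$ is built from $\nu^N$ and $h_0$. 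Each summand is $v_N^{-1}$ times a quantity of the form $\mathcal G^N_{X^N_i/\cdot}(\cdot)$ evaluated at the current state, so control of these characteristics reduces to control of $\mathcal G^N_x(H)$ for $H$ in a suitable class.

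The heart of the argument is to verify the three ingredients of the Jacod–Shiryaev tightness theorem (their Theorem VI.4.18 / IX.3.21): (i) for each $t$, the families $\{B^N_t\}_N$, $\{\widetilde C^N_t\}_N$ and $\{\int g\,\mathrm d\nu^N_t\}_N$ (for $g$ running over a determining class of bounded continuous functions vanishing near $0$) are tight in $\R$; (ii) a $C$-tightness / predictable-majoration condition ensuring no explosion of the drift and quadratic parts; and (iii) the "big jumps" condition, which is precisely \textbf{(H0)}: $\lim_{b\to\infty}\sup_{N,x}\mathcal G^N_x(\un_{B(0,b)^c})=0$ kills the mass of $\nu^N$ outside large balls uniformly. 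For (i) the key point is that every relevant integrand can be matched to an element of $\mathrm{Vect}(\mathcal H)$: the drift components $h_0^i$ and the quadratic terms $h_0^ih_0^j$ lie in $C^2_{b,0}$, hence by Lemma~\ref{lem-decomp} decompose along $h^i$, $h^ih^j$ and a remainder $\overline{(\,\cdot\,)}^h=o(|h|^2)$; by \textbf{(H1.1)} the first two families are in $\mathrm{Vect}(\mathcal H)$, and by \textbf{(H1.2)} the remainder, being continuous, compactly supported after truncation and $o(|h|^2)$, is a uniform limit of functions $g_n$ with $|h|^2g_n\in\mathrm{Vect}(\mathcal H)$ — so up to an arbitrarily small sup-norm error every integrand is a finite linear combination of elements of $\mathcal H$. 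Then \textbf{(H1.3)(i)} gives $\sup_x|\mathcal G^N_x(H)-\mathcal G_x(H)|\to 0$ and \textbf{(H1.3)(ii)} gives $\sup_x|\mathcal G_x(H)|<\infty$, whence for each $H\in\mathrm{Vect}(\mathcal H)$,
$$\sup_N\sup_{x}\big|\mathcal G^N_x(H)\big|<\infty,$$
so that each characteristic summed over $[v_Nt]$ steps is bounded in absolute value by $C\,t$ uniformly in $N$, with an additionally uniformly small error from the $g_n$-approximation. This yields boundedness of the characteristics at each fixed time and, since the bounds are proportional to time increments, the Aldous-type modulus-of-continuity control needed for tightness of the characteristic processes in $\mathbb D$.

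Finally I would assemble these into tightness of $(X^N_{[v_N\cdot]})_N$ in $\mathbb D([0,\infty),\overline{\mathcal X})$: the characteristics are tight with $C$-tight (indeed absolutely continuous, Lipschitz-in-time with uniform constant) first and modified-second parts and a big-jump-controlled $\nu^N$, so the Jacod–Shiryaev criterion applies and gives tightness of the laws; the state space in the limit is $\overline{\mathcal X}$ because $X^N$ takes values in $\mathcal X$ and closure is preserved under weak limits of the marginals, combined with tightness of $(X_0^N)_N$ in $\overline{\mathcal X}$ for the initial condition. The main obstacle I anticipate is step (i): one must be careful that the Lemma~\ref{lem-decomp} decomposition of $h_0^i$ and $h_0^ih_0^j$ together with the \textbf{(H1.2)} approximation genuinely covers \emph{all} the integrands appearing in the Jacod–Shiryaev criterion — in particular the functions $g$ in their convergence-determining class and the $o(|h|^2)$ remainders — uniformly in $N$ and $x$, so that the uniform bounds from \textbf{(H1.3)} transfer without loss to the actual characteristics; this is where the richness hypotheses on $\mathcal H$ do the real work, and where the Stone–Weierstrass argument alluded to in the remark after \textbf{(H1)} enters.
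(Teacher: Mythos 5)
Your overall architecture coincides with the paper's: write the characteristics of $X^N_{[v_N\cdot]}$ as time-averages of ${\cal G}^N_{X^N_k}(\cdot)$, use Lemma \ref{lem-decomp} with \textbf{(H1.1)}--\textbf{(H1.2)} to reduce the Jacod--Shiryaev integrands to $\mathrm{Vect}(\mathcal H)$, use \textbf{(H0)} for the big jumps, and invoke the tightness theorem of \cite[IX]{Jacod}. But the step you yourself flag as ``the main obstacle'' is precisely where the argument, as written, does not close. You assert that since each integrand is a finite combination of elements of $\mathcal H$ ``up to an arbitrarily small sup-norm error'', the uniform bounds of \textbf{(H1.3)} transfer to the characteristics ``with an additionally uniformly small error''. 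Sup-norm smallness of the error is useless here, because ${\cal G}^N_x(H)=v_N\,\E\big(H(F^N_x-x)\big)$ carries the diverging factor $v_N$: a sup-norm bound on the error only yields a bound of order $v_N$. The missing idea is the positivity (monotonicity) of $H\mapsto{\cal G}^N_x(H)$ combined with the weighted norm $\|H\|_h=\sup_{u\in{\cal U}^*}|H(u)|/|h(u)|^2$: since $|h|^2\in\mathrm{Vect}(\mathcal H)$ by \textbf{(H1.1)}, hypothesis \textbf{(H1.3)} gives $\alpha(|h|^2)=\sup_{N,x}|{\cal G}^N_x(|h|^2)|<\infty$, hence $|{\cal G}^N_x(H)|\leq {\cal G}^N_x(|h|^2)\,\|H\|_h\leq \alpha(|h|^2)\,\|H\|_h$ (inequality \eqref{majj}); applied to the error $|h|^2(g-g_n)$, whose $\|\cdot\|_h$-norm equals $\|g-g_n\|_{\infty,{\cal U}}$, this makes the approximation uniform in $N$ and $x$ despite $v_N\to\infty$. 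This is exactly the content of the paper's Lemma \ref{extension.e}, which extends \textbf{(H1.3)} to all of $C^2_{b,0}$ and yields the uniform bound \eqref{alpha}; without it, your claim that each characteristic is bounded by $C\,t$ uniformly in $N$ is not established.

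A second, related omission: the $o(|h|^2)$ remainders are in general not compactly supported, and your phrase ``compactly supported after truncation'' hides a step that again needs positivity together with \textbf{(H0)}: one cuts off with functions $\varphi_n$ and controls the tail via $|{\cal G}^N_x(H(1-\varphi_n))|\leq \|H\|_\infty\,{\cal G}^N_x(\un_{B(0,n)^c})$, uniformly small by \textbf{(H0)}. So \textbf{(H0)} is used twice — once for the big-jump condition on $\nu^N$, once to complete the extension of the uniform bounds — and both uses rest on the monotonicity of ${\cal G}^N_x$, which never appears in your plan. Once Lemma \ref{extension.e} is in place, the remainder of your plan (Lipschitz-in-time majoration of $B^N$, $\widetilde C^N$ and of the integrated jump measure, big-jump control, tightness of the initial laws, then the tightness theorem in \cite[IX]{Jacod}) is essentially the paper's proof.
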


\me 
 The next hypothesis {\bf (H2)} in addition to {\bf (H1)}  is sufficient to get the identification
  of the limiting values by their  semimartingale characteristics, and  then their representation  as solutions of a stochastic differential equation. 

\bi {\bf Hypotheses (H2)}   
 \begin{enumerate}
 \item    \emph{For any  $\,H\in {\cal H}$, the map $  x\in  {\cal X} \rightarrow{\cal G}_{x}(H) $ is continuous and extendable by continuity to $ \overline {\cal X}.$} 
\item  \emph{For
any $\,x\in \overline{{\cal X}}$ and any $\,H\in {\cal H}$},
\be
\label{h2}
 {\cal G}_{x}(H) =  \sum_{i=1}^d \alpha_{i}^{h_0}(H) b_{i}(x) + \sum_{i,j=1}^d  \beta_{i,j}^{h_0}(H)c_{i,j}(x)   + \int_{V} \overline{H}^{h_0}(K(x,v)) \mu(dv), 
 \ee
 \end{enumerate}
 \emph{where
 \begin{itemize}
\item[i)] 
  $\alpha_{i}^{h_0}$, $\beta_{i,j}^{h_0}$ and $\overline{H}^{h_0}$ have been defined in Lemma \ref{lem-decomp}, 
  \item[ii)]  $b_{i}$ and $\sigma_{i,j}$ are measurable functions defined  on $\overline{{\cal X}}$, 
  \item[iii)]  $V$ is a Polish space, $\mu$ is a $\sigma$-finite positive measure  on $V$, $K$ is a function from $\overline{{\cal X}}\times V$ with values in ${\cal U}$,  $ \int_{V}   1\wedge\vert K(.,v)\vert^2  \mu(dv)<+\infty$ and 
$$c_{i,j}(x)=\sum_{k=1}^d\sigma_{i,k}(x)\sigma_{j,k}(x) +\int_{V} (h^i_{0} h^j_{0})(K(x,v)) \mu(dv).$$
\end{itemize}}

 \bi The elements $(b,\sigma,V,\mu,K)$ will  be specified in the applications. 

\bi
\begin{thm}
 \label{identification}  
  If the sequence $(X_0^N)_N$  is tight in $\overline{{\cal X}}$ 
and  {\bf (H0)}, ${\bf (H1)}$,  ${\bf (H2)}$ hold  then any  limiting value of $\,(X^N_{[v_{N}.]}, N\in \mathbb{N})$  is a   semimartingale    solution of the  stochastic differential system
 \be
 \label{eds}
 X_{t}&=&X_{0} + \int_{0}^t b(X_{s}) ds + \int_{0}^t \sigma(X_{s}) dB_{s} + \int_{0}^t\int_{V} h_{0}(K(X_{s-},v)) \tilde N(ds, dv)\nonumber\\
 && \hskip 3cm + \int_{0}^t\int_{V} (Id - h_{0})(K(X_{s-},v)) N(ds, dv),
 \ee
where  
 $X_{0}\in \overline{{\cal X}}$ and $B$ is a $d$-dimensional Brownian motion and
    $N$ is a Poisson Point  measure on $\R_{+}\times V$ with intensity $ds \mu(dv)$. Moreover  $X_{0}, B$ , $N$ are independent and $\tilde N$ is the   compensated martingale measure of $N$.
    \end{thm}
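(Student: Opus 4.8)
The plan is to follow the classical tightness-plus-identification strategy of \cite[IX]{Jacod}, but transported onto the test-function space $\mathcal H$ instead of the usual truncation-based convergence determining class. By Theorem \ref{thm-tightness}, under {\bf (H0)} and {\bf (H1)} the sequence $(X^N_{[v_N\cdot]})_N$ is tight in $\mathbb D([0,\infty),\overline{\cal X})$, so it suffices to fix a limiting value $X$, obtained as the weak limit of a subsequence (still denoted $X^N_{[v_N\cdot]}$), and show that it solves \eqref{eds}. First I would record that $X^N_{[v_N\cdot]}$ is a semimartingale whose characteristics $(B^N,C^N,\nu^N)$ — relative to the truncation function $h_0$ — are, up to the usual discrete-time bookkeeping, built from the sums $\sum_{i\le [v_Nt]}\mathcal G^N_{X^N_i/N}(H)$ for $H\in\{h_0^k,\ h_0^kh_0^\ell,\ \text{and test functions } g\}$; explicitly $B^{N,k}_t=\sum_{i\le[v_Nt]}\mathcal G^N_{X^N_i}(h_0^k)/v_N$, and similarly for $\widetilde C^{N}$ (the modified second characteristic) and for $\nu^N(g)$ against $g\in C_c(\mathcal U^*,\R)$.

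The core step is to pass to the limit in these characteristics using only $H\in\mathcal H$. Given any $H\in C^2_{b,0}$, Lemma \ref{lem-decomp} applied with $f=h_0$ writes $H=\sum_i\alpha_i^{h_0}(H)h_0^i+\sum_{i,j}\beta_{i,j}^{h_0}(H)h_0^ih_0^j+\overline H^{h_0}$; by {\bf (H1.1)} the functions $h_0^i$ and $h_0^ih_0^j$ lie in $\mathrm{Vect}(\mathcal H)$, and by {\bf (H1.2)} any $g\in C_c(\mathcal U,\R)$ with $g(0)=0$ is a uniform limit of $g_n\in C^2_{b,0}$ with $|h_0|^2g_n$ (equivalently, using \eqref{truncation}, $|h|^2g_n$ up to equivalence of the norm) in $\mathrm{Vect}(\mathcal H)$. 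Combining with {\bf (H1.3)(i)}, the convergence $\mathcal G^N_x(H)\to\mathcal G_x(H)$ holds uniformly in $x$ for every $H\in\mathrm{Vect}(\mathcal H)$, hence for every truncation-type test function after the uniform $g_n\to g$ approximation; a standard $3\varepsilon$ argument controls the error since $\|\mathcal G^N_x\|$ is bounded uniformly on the relevant class by {\bf (H0)} and {\bf (H1.3)(ii)}. Now by {\bf (H2.1)} the limit $x\mapsto\mathcal G_x(H)$ is continuous on $\overline{\cal X}$, so the continuous mapping theorem together with the (sub)convergence $X^N_{[v_N\cdot]}\Rightarrow X$ and a Riemann-sum / ergodic-type argument for the additive functionals $\frac1{v_N}\sum_{i\le[v_Nt]}\mathcal G^N_{X^N_i}(H)$ yields, for every $H\in\mathcal H$ and every continuity time $t$,
\[
\frac1{v_N}\sum_{i\le[v_Nt]}\mathcal G^N_{X^N_i}(H)\ \Longrightarrow\ \int_0^t\mathcal G_{X_s}(H)\,ds,
\]
and similarly for the quadratic-variation correction terms; uniform integrability for this convergence comes from the uniform bounds. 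This identifies the limiting characteristics of $X$ (relative to $h_0$) as
\[
B^k_t=\int_0^t b_k(X_s)\,ds,\quad \widetilde C^{k\ell}_t=\int_0^t c_{k,\ell}(X_s)\,ds,\quad \nu(dt,du)=dt\int_V\delta_{K(X_t,v)}(du)\,\mu(dv),
\]
precisely through formula \eqref{h2} of {\bf (H2.2)}, after checking that the decomposition coefficients $\alpha^{h_0},\beta^{h_0},\overline H^{h_0}$ appearing there match the ones produced by Lemma \ref{lem-decomp}; the integrability condition $\int_V 1\wedge|K(\cdot,v)|^2\mu(dv)<\infty$ in {\bf (H2.2)} guarantees $\nu$ is a genuine semimartingale jump intensity. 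Finally, \cite[IX.2.11 and II.2.34]{Jacod} characterize a semimartingale by its characteristics, and the standard representation theorem for semimartingales with absolutely continuous characteristics of this form (e.g. \cite[III.2.26]{Jacod}, or Itô's construction on an extension of the probability space) produces a Brownian motion $B$ and a Poisson point measure $N$ with intensity $ds\,\mu(dv)$, independent, with $X_0\in\overline{\cal X}$ independent of both, such that $X$ solves \eqref{eds}; the split of the jump term into the $\tilde N$ (compensated) and $N$ (uncompensated) parts is exactly the split of $h_0$ and $Id-h_0$.

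The main obstacle I anticipate is the passage from the restricted class $\mathcal H$ to genuine truncation-type test functions in a way that is simultaneously uniform in $x$ and compatible with the limit-ergodic argument: the functions in $\mathcal H$ are \emph{not} $o(|u|^2)$ near $0$ (the remark after {\bf (H1)} stresses this), so one cannot directly invoke Jacod--Shiryaev's criterion, and one must carefully carry the Lemma \ref{lem-decomp} decomposition through the limit $N\to\infty$ while controlling the tails via {\bf (H0)} and the small-$u$ behaviour via the $g_n\to g$ approximation in {\bf (H1.2)}; getting the error terms to vanish uniformly — rather than just pointwise in $x$ — is where the structure of $\mathcal H$ (being stable in $\mathrm{Vect}$ under the operations of Lemma \ref{lem-decomp}) is essential. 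A secondary technical point is justifying that the additive-functional convergence can be taken jointly with $X^N_{[v_N\cdot]}\Rightarrow X$ at continuity points and that no mass escapes to infinity, which again rests on {\bf (H0)}.
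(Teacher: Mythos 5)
Your plan is correct and follows essentially the same route as the paper: a.s. uniform convergence of the discrete characteristics built from ${\cal G}^N$ (the Riemann-sum argument of Proposition \ref{key-prop}), extension from ${\cal H}$ to the needed test functions via the decomposition of Lemma \ref{lem-decomp} and the approximation in {\bf (H1.2)}, identification of the limiting martingale problem by \cite[IX, Theorem 2.11]{Jacod}, and the SDE representation via \cite[III, Theorem 2.26]{Jacod}. The only step you underemphasize is the preliminary lemma extending ${\cal G}_{\cdot}(H)$, for $H\in{\cal R}_b$, continuously to $\overline{\cal X}$ together with the representation ${\cal G}_x(H)=\int_V H(K(x,v))\,\mu(dv)$ there, but this is obtained exactly by the approximation mechanism you already describe.
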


 \bi
 To obtain the convergence in law of the sequence of processes $\,(X^N_{[v_{N}.]}, N\in \mathbb{N})$ in $\mathbb{D}([0,\infty), \overline{{\cal X}})$, we need 
 
 \bi
 \me {\bf Hypothesis (H3)}
\emph{The law of  the  initial condition $X_0\in \overline{{\cal X}}$ being given, 
the uniqueness in law of the solution of  (\ref{eds}) holds  in $\mathbb{D}([0,\infty), \overline{{\cal X}})$.}
 

  \me We are now in position to state the convergence result.

 \begin{thm}
 \label{main}  Assume that the sequence $(X_0^N)_N$ converges in law in $\overline{{\cal X}}$ to $X_0$
 and that   {\bf (H0)}, {\bf (H1)},  {\bf (H2)} and {\bf (H3)} hold.   Then the sequence of processes $(X^N_{[v_{N}.]})_N$ converges in law  in $\mathbb{D}([0,\infty), \overline{{\cal X}})$  to the solution of \eqref{eds}.
 \end{thm}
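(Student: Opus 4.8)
The plan is to combine the three ingredients already assembled: tightness (Theorem~\ref{thm-tightness}), identification of every subsequential limit (Theorem~\ref{identification}), and uniqueness in law (Hypothesis~\textbf{(H3)}). First I would invoke Theorem~\ref{thm-tightness}: since $(X_0^N)_N$ converges in law in $\overline{\mathcal X}$, it is in particular tight in $\overline{\mathcal X}$, and together with \textbf{(H0)} and \textbf{(H1)} this gives tightness of the sequence $(X^N_{[v_N\cdot]})_N$ in $\mathbb D([0,\infty),\overline{\mathcal X})$. By Prohorov's theorem, from any subsequence one can extract a further subsequence converging in law in $\mathbb D([0,\infty),\overline{\mathcal X})$ to some limiting process $X$. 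Along that subsequence the convergence of the initial conditions is preserved, so $X_0$ has the prescribed law.

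Next I would apply Theorem~\ref{identification} to that subsequential limit $X$: under \textbf{(H0)}, \textbf{(H1)}, \textbf{(H2)}, any limiting value is a semimartingale solution of the SDE~\eqref{eds} driven by some Brownian motion $B$ and some Poisson point measure $N$ with intensity $ds\,\mu(dv)$, with $X_0$, $B$, $N$ independent and $X_0$ having the given law. Now \textbf{(H3)} asserts uniqueness in law of the solution of~\eqref{eds} in $\mathbb D([0,\infty),\overline{\mathcal X})$ once the law of $X_0$ is fixed. Hence the law of $X$ does not depend on the chosen subsequence: every subsequential limit of $(X^N_{[v_N\cdot]})_N$ has the same law, namely that of the (law-)unique solution of~\eqref{eds} with initial law that of $X_0$.

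Finally I would use the standard subsequence criterion for convergence in law: a tight sequence in a Polish space (here $\mathbb D([0,\infty),\overline{\mathcal X})$ with the Skorokhod topology) all of whose subsequential limits coincide in law converges in law to that common limit. Applying this with the common limit being the solution of~\eqref{eds} yields $X^N_{[v_N\cdot]}\Rightarrow X$ in $\mathbb D([0,\infty),\overline{\mathcal X})$, which is the claim.

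The argument is essentially a soft assembly of previously established results, so there is no serious technical obstacle at this stage; the only points requiring a little care are checking that $\overline{\mathcal X}$ (a closed subset of $\mathbb R^d$) is Polish so that the Skorokhod space is Polish and Prohorov applies, and verifying that along an extracted subsequence the convergence $X_0^N\Rightarrow X_0$ together with the independence structure produced by Theorem~\ref{identification} is compatible with the hypotheses of \textbf{(H3)} — i.e. that the initial law fed into the uniqueness statement is indeed the law of $X_0$. Both are routine. The genuine work of the theorem has already been done in Theorems~\ref{thm-tightness} and~\ref{identification} and is deferred, via \textbf{(H3)}, to the pathwise-uniqueness verifications carried out model by model in Sections~\ref{WF} and~\ref{CSBPLEI}.
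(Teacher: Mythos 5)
Your argument is correct: tightness (Theorem~\ref{thm-tightness}) plus Prohorov, identification of every subsequential limit as a weak solution of \eqref{eds} with initial law $\mathcal{L}(X_0)$ (Theorem~\ref{identification}, together with the continuity of the evaluation at $t=0$ on the Skorokhod space to pin down the initial law), and uniqueness in law from \textbf{(H3)}, assembled by the standard subsequence principle on the Polish space $\mathbb{D}([0,\infty),\overline{\mathcal{X}})$, does yield the stated convergence, and you correctly flag the only two points needing care. The paper, however, does not run this soft Prohorov/subsequence assembly: its proof of Theorem~\ref{main} consists of a single application of the semimartingale limit theorem of Jacod--Shiryaev (Theorem 3.21, Chapter IX), observing that its conditions $(i)$--$(vi)$ -- convergence of the characteristics $(B^N,\widetilde{C}^N,\phi^N)$, the strong majoration \eqref{domm}, the control of big jumps \eqref{biggj}, continuity of the limiting characteristics, etc. -- were already verified in the proofs of Theorems~\ref{thm-tightness} and~\ref{identification}, and that \textbf{(H3)} supplies exactly the uniqueness condition $(iii)$ of that theorem. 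The two routes are logically parallel (both reduce everything to characteristics convergence plus uniqueness of the limiting martingale problem), but yours is a self-contained deduction from the paper's own two intermediate theorems, making the logical structure tightness~$+$~identification~$+$~uniqueness explicit, whereas the paper's version trades that transparency for brevity by delegating the assembly to the Jacod--Shiryaev machinery it has already set up.
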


 \subsection{Proofs}
 \label{proofgene}
 From now on, we assume that hypotheses  {\bf (H0)} and  {\bf (H1)}  hold. We recall that $\mathcal U^*=\mathcal U\setminus\{0\}$.  
 
 \me In the proofs, we  use 
 the space  ${\cal R}_{b}$ of continuous and bounded functions which are small enough close to $0$ :
$$ {\cal R}_{b} = \{ H\in C_{b}({\cal U}, \mathbb{R}), \, H(u) = o(|u|^2)\}.$$
Using  Lemma \ref{lem-decomp} and {\bf (H1.1)}, we have
 \be
\label{decomp}
C^2_{b,0} =   Vect({\cal  H}) +  {\cal R}_{b}.\ee
We work with the norm
$$\|H\|_{h} = \sup_{u\in {\cal U}^*}  \frac{|H(u)|}{|h(u)|^2}, $$
defined for $H\in {\cal R}_b$ such that $\sup_{u\in {\cal U}^*}  |H(u)|/|h(u)|^2<+\infty$. 
In that case, the  positivity  and linearity of ${\cal G}_{x}^N$   for all $  x\in {\cal X}$ and $N\geq 1$ imply that  \be
\label{majj}
\vert {\cal G}_{x}^N(H) \vert \leq {\cal G}_{x}^N(|h|^2) \,\|H\|_{h} \leq \alpha(|h|^2)\,\|H\|_{h},
\ee
where $\alpha(|h|^2) = \sup_{N,x\in \mathcal X}| {\cal G}_{x}^N (|h|^2)|<\infty$ by ${\bf (H1.3)}$ since $\vert h\vert^2 \in Vect({\cal H})$ by ${\bf (H1.1)}$.\\
 
 \subsubsection{Proof of Theorem  \ref{thm-tightness}  }

We  first extend the assumptions {\bf (H1.3)} to $C^2_{b,0}$ in order to prove the tightness.
 We note that  {\bf (H1.3i)} and {\bf (H1.3ii)} extend immediately to $H\in Vect({\cal H})$ by linearity of $H \to {\cal G}^N_x(H)$ 
for any $x\in {\cal X}$ and $N\geq 1$. 
 \begin{lem}
 \label{extension.e} For any $x\in \mathcal X$, there exists a linear extension $H\in C^2_{b,0} \rightarrow {\cal G}_{x}(H)$ of $ {\cal G}_{x}$ 
such that ${\bf (H1.3)}$ hold for any $H \in  C^2_{b,0}$. 
 \end{lem}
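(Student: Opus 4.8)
The idea is to build the extension by first extending to $Vect(\mathcal H)$ using linearity, then to all of $C^2_{b,0}$ using the decomposition \eqref{decomp} together with a density argument controlled by the norm $\|\cdot\|_h$. Concretely, given $x \in \mathcal X$, the hypothesis {\bf (H1.3)} already defines $\mathcal G_x(H)$ for $H \in \mathcal H$; extend it by linearity to $H \in Vect(\mathcal H)$. This is well-defined provided we check consistency, i.e. that if a finite linear combination of elements of $\mathcal H$ is the zero function then the corresponding combination of the values $\mathcal G_x(H)$ vanishes; but this is automatic because each $\mathcal G^N_x$ is linear and $\mathcal G_x$ is a pointwise limit of the $\mathcal G^N_x$ by {\bf (H1.3i)}. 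Along the way {\bf (H1.3i)} and {\bf (H1.3ii)} pass to $Vect(\mathcal H)$ by linearity, as already noted in the excerpt.

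Next I would handle $\mathcal R_b$. For $H \in \mathcal R_b$ with $\|H\|_h < \infty$, note that $\mathcal G^N_x(H)$ is controlled by \eqref{majj}: $|\mathcal G^N_x(H)| \le \alpha(|h|^2)\,\|H\|_h$, uniformly in $N$ and $x$. The plan is to approximate such $H$ by elements of $Vect(\mathcal H)$ in $\|\cdot\|_h$-norm. Using {\bf (H1.2)}: writing $H = |h|^2 g$ is not literally possible for general $H \in \mathcal R_b$, so instead one approximates the function $g := H/|h|^2$ (extended by $0$ at $0$, where it is continuous with value $0$ since $H = o(|h|^2) = o(|u|^2)$) — one truncates $g$ to have compact support, approximates uniformly by $g_n \in C^2_{b,0}$ with $|h|^2 g_n \in Vect(\mathcal H)$, and checks that $|h|^2 g_n \to H$ in $\|\cdot\|_h$. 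Then define $\mathcal G_x(H) := \lim_n \mathcal G_x(|h|^2 g_n)$; the bound \eqref{majj} applied to the differences shows the limit exists and is independent of the approximating sequence, and that {\bf (H1.3i)}, {\bf (H1.3ii)} survive (the uniform-in-$N$ bound \eqref{majj} lets one interchange the limits in $N$ and $n$). Finally, for a general $H \in C^2_{b,0}$, use \eqref{decomp} to write $H = G + R$ with $G \in Vect(\mathcal H)$, $R \in \mathcal R_b$, set $\mathcal G_x(H) := \mathcal G_x(G) + \mathcal G_x(R)$, and check this does not depend on the splitting (the difference of two splittings lies in $Vect(\mathcal H) \cap \mathcal R_b$, on which both definitions agree by the $\|\cdot\|_h$-continuity established above). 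Linearity and {\bf (H1.3)} for all of $C^2_{b,0}$ then follow.

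The main obstacle I expect is the density step for $\mathcal R_b$: one must pass from the hypothesis {\bf (H1.2)}, which gives uniform ($\|\cdot\|_\infty$) approximation of a \emph{compactly supported} $g$ by $g_n$ with $|h|^2 g_n \in Vect(\mathcal H)$, to $\|\cdot\|_h$-approximation of $H = |h|^2 g$. The delicate point is the behavior near $0$ and near $\infty$ (or near $\partial\mathcal U$): near $0$, the ratio $H/|h|^2 = g$ is small and uniform approximation of $g$ translates directly to $\|\cdot\|_h$-control, using $h^i(u) = u_i(1+o(u))$ and $h^i(u)\neq 0$ for $u\neq 0$ so that $|h(u)|^2 \sim |u|^2$; away from $0$, $|h|^2$ is bounded below on compact sets, so again $\|\cdot\|_\infty$ control of $g - g_n$ on the relevant compact set gives $\|\cdot\|_h$ control. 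One has to be a little careful that an arbitrary $H \in \mathcal R_b$ with $\|H\|_h < \infty$ has $g = H/|h|^2$ vanishing at infinity well enough to be approximated by compactly supported functions — this is where {\bf (H0)}, controlling the tails, and the structure of $\mathcal U$ enter, and is the one place the argument needs genuine care rather than bookkeeping.
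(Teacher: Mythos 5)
For the core of the argument your plan coincides with the paper's proof: reduce to ${\cal R}_b$ via \eqref{decomp}, and for a compactly supported $H\in{\cal R}_b$ write $H=|h|^2 g$ with $g$ continuous, compactly supported, $g(0)=0$, approximate $g$ uniformly by $g_n$ with $|h|^2g_n\in Vect({\cal H})$ ({\bf (H1.2)}), and use the uniform bound \eqref{majj} to get a Cauchy sequence and to interchange the limits in $n$ and $N$. Your worries about consistency of the linear extension and of the splitting in \eqref{decomp} are harmless: since in every case ${\cal G}_x(H)$ is obtained as the (uniform in $x$) limit of ${\cal G}^N_x(H)$, linearity and well-definedness are automatic.

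The genuine gap is the passage from compactly supported $H$ to a general $H\in{\cal R}_b$. Your route --- restrict to $H$ with $\|H\|_h<\infty$ and approximate $g=H/|h|^2$ uniformly by compactly supported functions --- does not work in general: the lemma must cover every bounded continuous $H$ with $H(u)=o(|u|^2)$, for which $\|H\|_h$ may be infinite when ${\cal U}$ is unbounded (nothing bounds $|h|^2$ away from $0$ far from the origin), and even when $\|H\|_h<\infty$ the ratio $g$ need not vanish at infinity, so it is not a uniform limit of compactly supported functions; moreover {\bf (H1.2)} only provides approximants for compactly supported targets. The correct mechanism, which you name ({\bf (H0)}) but do not implement, is to truncate $H$ itself: take cutoffs $\varphi_n$ equal to $1$ on $B(0,n)$ and $0$ off $B(0,n+1)$, note that each $H\varphi_n$ is a compactly supported element of ${\cal R}_b$ already handled, and control the tail directly by
$$\sup_{N,x}\big|{\cal G}^N_x(H\varphi_m)-{\cal G}^N_x(H\varphi_n)\big|\ \leq\ \|H\|_\infty\,\sup_{N,x}{\cal G}^N_x\big(\un_{B(0,n)^c}\big)\ =\ \|H\|_\infty\,C_n,\qquad m\geq n,$$
with $C_n\to 0$ by {\bf (H0)}. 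This uses only $\|H\|_\infty$, not $\|H\|_h$, makes $({\cal G}_x(H\varphi_n))_n$ Cauchy uniformly in $x$, and a three-term triangle inequality then yields $\sup_x|{\cal G}^N_x(H)-{\cal G}_x(H)|\to 0$, i.e. {\bf (H1.3)} on all of ${\cal R}_b$ and hence on $C^2_{b,0}$. Without this step your construction only defines the extension on a strict subclass of ${\cal R}_b$.
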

 
As a consequence, writing $\alpha(H) =  \sup_{N,x\in \mathcal X}| {\cal G}_{x}^N (H)|$, for any  $H \in  C^2_{b,0}$,
 \be
\label{alpha}\  \sup_{x\in \mathcal X}| {\cal G}_{x} (H)|\leq  \sup_{N,x\in \mathcal X}| {\cal G}_{x}^N (H)|=\alpha(H)  <+\infty.\ee

\begin{proof} Using \eqref{decomp} and linearity, we only have to prove the extension to ${\cal R}_b$. Let us first prove the result for the compactly supported functions of  ${\cal R}_{b}$.  We consider $H\in {\cal R}_b$ with compact support
and  show that the sequence $({\cal G}_{x}^N (H))_{N}$ converges when $N$ tends to infinity. The function
 $H/|h|^2$ defined on $\mathcal U^*$ can be extended to a continuous  function  $g$ on $\mathcal U$ with compact support and $g(0)=0$. Then by ${\bf (H1.2)}$, there exists a sequence $(g_{n})_n$ of functions of $C^2_{b,0}$ uniformly converging to $g$ and such that  $H_{n}=|h|^2 g_{n} \in Vect({\cal H})$. Since $H=|h|^2g$,   $\| H_{n}-H\|_{h}\rightarrow 0$  when $n\rightarrow\infty$. Moreover  the sequence $\big({\cal G}^N_{.}(H_n)\big)_N$ converges to ${\cal G}_{.}(H_n)$ when $N$ tends to infinity for any fixed $n$ and uniformly ion ${\cal X}$. Let us now consider two integers $m$ and $n$.  Equation \eqref{majj}  tells us that
 $$\sup_{N,x}\big|{\cal G}^N_{x}(H_{m}) - {\cal G}^N_{x}(H_{n})\big| \leq 
\, \alpha(|h|^2)\,\|H_{m}-H_{n}\|_{h}$$
and 
 letting $N$ go to infinity, we obtain that
$({\cal G}_{x}(H_n))_{n}$ is a Cauchy sequence. Then it  converges to a limit denoted by ${\cal G}_{x}(H)$, which satisfies  $\sup_{{\cal X}} |{\cal G}_{.} (H)| <\infty$. Moreover
$$|{\cal G}^N_{x}(H) - {\cal G}_{x}(H)| \leq \big| {\cal G}^N_{x}(H) - {\cal G}^N_{x}(H_{n}) \big| + \big|{\cal G}^N_{x}(H_{n}) - {\cal G}_{x}(H_{n}) \big|+\big|{\cal G}_{x}(H_{n}) - {\cal G}_{x}(H) \big|.$$
Since $\big| {\cal G}^N_{x}(H) - {\cal G}^N_{x}(H_{n}) \big| \leq   \alpha(|h|^2)\, \|g - g_{n}\|_{\infty}$, an appropriate choice of 
  $n$ and  then of $N$ allows us   to upper bound the left hand side by any $\epsilon>0$ and  this ensures
$$\sup_{x\in {\cal X}} |{\cal G}^N_{x}(H) - {\cal G}_{x}(H)| \stackrel{N\rightarrow \infty}{\longrightarrow} 0.$$ 
Let us now consider $H\in \mathcal R_b$. We  introduce a non-decreasing  sequence 
$(\varphi_{n})_{n}\in C^2(\R^d, [0,1])$ such that 
$$
\varphi_{n}(x) = \begin{cases}
  1 & \text{on $B(0,n)$ and} \\
  0 & \text{ on $B(0,n+1)^c$}.
 \end{cases}
 $$
 For $x\in {\cal X}$ and $N\geq 1$, 
$$
|{\cal G}^N_{x}(H \varphi_{m}) - {\cal G}^N_{x}(H \varphi_{n})| \leq \|H\|_{\infty} \, {\cal G}_{x}^N(\un_{B(0,n)^c}) 
\leq \|H\|_{\infty}\,C_{n},\qquad  m\geq n\geq N\geq1 $$
where $C_{n
}\rightarrow 0$  as $n\rightarrow \infty$ by  {\bf (H0)}. Letting $N$ tend to infinity, we obtain that  for any $x\in {\cal X}$, the sequence $({\cal G}_{x}(H \varphi_{n}))_{n}$ is Cauchy and converges  to some real number  ${\cal G}_{x}(H )$.
Moreover $ |{\cal G}_{x}(H)- {\cal G}_{x}(H \varphi_{n})|\leq C_n \|H\|_{\infty}$. It  follows that for any $H \in  {\cal R}_{b}$, 
\ben
 |{\cal G}^N_{x}(H)- {\cal G}_{x}(H)|&\leq&  |{\cal G}^N_{x}(H)- {\cal G}^N_{x}(H \varphi_{n})| + |{\cal G}^N_{x}(H \varphi_{n})- {\cal G}_{x}(H\varphi_{n})|
 + |{\cal G}_{x}(H \varphi_{n})- {\cal G}_{x}(H)|\\
 &\leq & 2C_n  \|H\|_{\infty}+  |{\cal G}^N_{x}(H \varphi_{n})- {\cal G}_{x}(H\varphi_{n})|
 \een
As $H\varphi_{n}\in {\cal R}_b$ and has compact support, ${\cal G}^N_{.}(H \varphi_{n})- {\cal G}_{.}(H\varphi_{n})$ and then 
  ${\cal G}^N_{.}(H)- {\cal G}_{.}(H)$
  tend to $0$ as $N$ tends to infinity uniformly on ${\cal X}$. 
It proves ${\bf (H1.3)}$ and \eqref{alpha}.
\end{proof}

\me We now prove  that a $\sigma$-finite measure can be associated to ${\cal G}_x$ for each $x\in {\cal X}$. It  describes the jumps of the limiting process.
\begin{lem}
\label{extension.mes}   
There exists a  family of $\sigma$-finite measures $ (\mu_{x} : x\in \mathcal X)$ on ${\cal U}^*$ such that   for any $x\in {\cal X}$ and  $H \in \mathcal{R}_{b}$,
 \be
 \label{mesidt}
 {\cal G}_{x}(H) = \int_{\cal U^*} H(u) \mu_{x}(du).
 \ee
For any $x\in {\cal X}$,  ${\cal G}_{x}$ is then extended by \eqref{mesidt} to  any measurable and bounded function $H$ on  $(\R^d)^*$ such that $H(u)=o(\vert u\vert ^2)$. Moreover
 \be
 \label{queueG}
\lim_{b\rightarrow\infty} \sup_{x\in {\cal X}}  \vert {\cal G}_{x}(\un_{B(0,b)^c})\vert = 0.
\ee
\end{lem}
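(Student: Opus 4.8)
The plan is to fix $x\in{\cal X}$, construct $\mu_x$ by a Riesz--Markov representation on $C_c({\cal U}^*,\R)$, bootstrap the identity \eqref{mesidt} from compactly supported test functions to all of ${\cal R}_b$ (and then to bounded measurable functions that are $o(|u|^2)$) by two successive truncations — one near $0$, one near infinity — and finally deduce \eqref{queueG}. Since ${\cal U}$ is closed in $\R^d$, the set ${\cal U}^*={\cal U}\setminus\{0\}$ is a locally compact, $\sigma$-compact Hausdorff space. Any $H\in C_c({\cal U}^*,\R)$ vanishes in a neighbourhood of $0$, hence lies in ${\cal R}_b$, so ${\cal G}_x(H)$ is well-defined by Lemma \ref{extension.e}; moreover $H\mapsto{\cal G}_x(H)$ is linear and ${\cal G}_x(H)=\lim_N v_N\,\E(H(F^N_x-x))\geq 0$ whenever $H\geq 0$. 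By the Riesz--Markov--Kakutani theorem there is thus a unique Radon measure $\mu_x$ on ${\cal U}^*$ with ${\cal G}_x(H)=\int_{{\cal U}^*}H\,d\mu_x$ for all $H\in C_c({\cal U}^*,\R)$, and being Radon on a $\sigma$-compact space, $\mu_x$ is $\sigma$-finite. (Measurability of $x\mapsto\mu_x(A)$ follows afterwards from the measurability of $x\mapsto{\cal G}_x(H)$ for $H\in Vect({\cal H})$, passing to limits and then using a monotone class argument.)

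Next I would establish the integrability bounds $\int_{{\cal U}^*}|h|^2\,d\mu_x<\infty$ and $\mu_x\big(B(0,\eps)^c\big)<\infty$ for every $\eps>0$, from which $\int_{{\cal U}^*}(1\wedge|u|^2)\,\mu_x(du)<\infty$. For a cutoff $\psi\in C_c({\cal U}^*,[0,1])$ the function $|h|^2\psi$ lies in $C_c({\cal U}^*,\R)$ and $\||h|^2\psi\|_h\leq 1$, so \eqref{majj} gives $0\leq\int|h|^2\psi\,d\mu_x={\cal G}_x(|h|^2\psi)\leq\alpha(|h|^2)$; letting $\psi\uparrow 1$ and applying monotone convergence yields $\int|h|^2\,d\mu_x\leq\alpha(|h|^2)<\infty$. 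Because $h^i(u)=u_i(1+o(u))$ and $h^i(u)\neq 0$ for $u\neq 0$ (see \eqref{truncation}), there is $c>0$ with $|h(u)|^2\geq c|u|^2$ on $\{|u|\leq1\}\cap{\cal U}$, hence $\int_{\{|u|\leq1\}}|u|^2\,\mu_x(du)<\infty$. For the tails, fix $\eps>0$ and a continuous $\chi$ with $\un_{B(0,\eps)^c}\leq\chi\leq\un_{B(0,\eps/2)^c}$; bounding $\chi\leq\un_{\{\eps/2\leq|u|\leq b\}}+\un_{B(0,b)^c}$, using that $|h|^2$ is bounded below on the compact set $\{\eps/2\leq|u|\leq b\}\cap{\cal U}$ and choosing $b$ large so that $\sup_{N,x}{\cal G}_x^N(\un_{B(0,b)^c})<\infty$ by {\bf (H0)}, one gets $\sup_{N,x}{\cal G}_x^N(\chi)<\infty$; since ${\cal G}_x(\chi\varphi_b)=\int\chi\varphi_b\,d\mu_x$ for all $b$ (Riesz, with $\varphi_b$ as in Lemma \ref{extension.e}), letting $b\to\infty$ via monotone convergence and {\bf (H0)} gives $\mu_x(B(0,\eps)^c)\leq\int\chi\,d\mu_x<\infty$.

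To prove \eqref{mesidt} for every $H\in{\cal R}_b$ I would proceed in two stages. If $H$ has bounded support, then $\|H\|_h<\infty$; multiplying by $\theta_\eps\in C^2(\R^d,[0,1])$ with $\theta_\eps=0$ on $B(0,\eps/2)$ and $\theta_\eps=1$ on $B(0,\eps)^c$, \eqref{majj} gives $|{\cal G}_x^N(H)-{\cal G}_x^N(\theta_\eps H)|\leq\alpha(|h|^2)\sup_{0<|u|\leq\eps}|H(u)|/|h(u)|^2\to 0$ as $\eps\to0$, since $H(u)=o(|u|^2)$. As $\theta_\eps H\in C_c({\cal U}^*,\R)$, letting $N\to\infty$ and then $\eps\to0$ — with $\int\theta_\eps H\,d\mu_x\to\int H\,d\mu_x$ by dominated convergence, the dominating function $\|H\|_h|h|^2$ being $\mu_x$-integrable by the previous paragraph — yields \eqref{mesidt}. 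For general $H\in{\cal R}_b$, I would truncate with the $\varphi_n$ of Lemma \ref{extension.e}: $|{\cal G}_x^N(H)-{\cal G}_x^N(\varphi_n H)|\leq\|H\|_\infty\,{\cal G}_x^N(\un_{B(0,n)^c})\to0$ uniformly in $N,x$ by {\bf (H0)}, while $\varphi_n H$ has bounded support and $H\in L^1(\mu_x)$ (since $|H|\leq C|u|^2$ near $0$ and $|H|\leq\|H\|_\infty$ with $\mu_x(B(0,1)^c)<\infty$), so ${\cal G}_x(\varphi_n H)=\int\varphi_n H\,d\mu_x\to\int H\,d\mu_x$. Any bounded measurable $H$ with $H(u)=o(|u|^2)$ is $\mu_x$-integrable by the same estimate, so one simply defines its extension by \eqref{mesidt}. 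Finally, for \eqref{queueG}: $\un_{B(0,b)^c}$ is bounded, measurable and vanishes near $0$, so ${\cal G}_x(\un_{B(0,b)^c})=\mu_x(B(0,b)^c)$; sandwiching it between $0$ and a continuous $H_b\in{\cal R}_b$ with $\un_{B(0,b)^c}\leq H_b\leq\un_{B(0,b/2)^c}$, the positivity of ${\cal G}_x^N$ and the uniform convergence ${\cal G}_x^N(H_b)\to{\cal G}_x(H_b)$ on ${\cal R}_b$ from Lemma \ref{extension.e} give $\sup_x\mu_x(B(0,b)^c)\leq\sup_x{\cal G}_x(H_b)\leq\sup_{N,x}{\cal G}_x^N(\un_{B(0,b/2)^c})\to0$ as $b\to\infty$ by {\bf (H0)}.

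The main obstacle is the two-sided control of ${\cal G}_x$. Near $0$ one must convert the cancellation $H=o(|u|^2)$ into a genuine quantitative estimate by playing it against the $\|\cdot\|_h$-bound \eqref{majj} and the uniform constant $\alpha(|h|^2)$, which is exactly where the local comparison $|h(u)|^2\gtrsim|u|^2$ is indispensable; near infinity — relevant when ${\cal U}$ is unbounded — the only lever is {\bf (H0)}. Interleaving the two truncations in the right order, and justifying each limiting step (vague convergence at the Riesz stage, then monotone and dominated convergence once $\int(1\wedge|u|^2)\,d\mu_x<\infty$ is secured), is the heart of the argument; the rest is bookkeeping.
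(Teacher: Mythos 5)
Your proposal is correct and follows essentially the same route as the paper: Riesz representation of the positive functional ${\cal G}_x$ on $C_c({\cal U}^*,\R)$, extension to ${\cal R}_b$ via the cutoffs $\varphi_n$, and \eqref{queueG} from {\bf (H0)} via a continuous majorant of $\un_{B(0,b)^c}$. Your write-up is in fact more careful than the paper's, making explicit the near-zero truncation (using $\|\cdot\|_h$, \eqref{majj} and the comparison $|h(u)|^2\gtrsim|u|^2$) and the integrability bounds $\int|h|^2\,d\mu_x<\infty$, $\mu_x(B(0,\eps)^c)<\infty$ that justify the dominated and monotone convergence steps the paper leaves implicit.
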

\begin{proof} For any $x\in \R^d$ and $H\in C_{c}({\cal U}^*, \R)$, the map $H \rightarrow {\cal G}_{x}(H)$ is a positive linear operator. Adding that
 ${\cal U}^*$ is locally compact, Riesz Theorem  leads to the existence of a  $\sigma$-finite measure $\mu_{x}$ on ${\cal U}^*$ such that for any $H\in C_{c}({\cal U}^*, \R)$, 
${\cal G}_{x}(H)= \int_{{\cal U}^*} H(u) \mu_{x}(du).$ 
The extension of this identity to any $H\in \mathcal R_b$ follows again from  an approximation procedure, using
 $\varphi_{n}$  defined in the proof of Lemma \ref{extension.e}. Indeed, on  the one hand monotone convergence ensures that  
 $\int_{{\cal U}^*} H \varphi_{n} \mu_{x}$ goes to  $ \int_{{\cal U}^*} H \mu_{x}$.
On the other hand,  
   $\vert {\cal G}_{x}(H \varphi_n)- {\cal G}_{x}(H)\vert \leq  C_n\|H\|_{\infty}$ goes to $0$.  
Finally \eqref{queueG}  comes from {\bf (H0)} with a monotone approximation of  $\,\un_{B(0,b)^c}$ by elements of ${\cal R}_{b}$  and the convergence of ${\cal G}^N$ to ${\cal G}$.  
\end{proof}



We now prove the convergence of conditional increments functionals, defined for  any function $H\in C^{0}_{b,2}$  and  $t>0$
by 
\be
\label{increment}
\phi^N_{t}(H) =  \sum_{k=1}^{[v_{N}t]} \E\big(H(X^N_{k} - X^N_{k-1})\,|\, {\cal F}^N_{k-1} \big) =  \,{1\over v_{N}} \sum_{k=1}^{[v_{N}t]} {\cal G}^N_{X^N_{k-1}}(H),
\ee
where the last identity follows from the Markov property.

\begin{prop}
\label{key-prop}
For any function $H\in C^{0}_{b,2}$  and $t>0$, 
$$
\lim_{N\to \infty}\sup_{t\leq T}\Big|\phi^N_{t}(H)- \int_{0}^t {\cal G}_{X^N_{[v_{N}s]}}(H) \,ds\Big|  = 0 \qquad \text{a.s}.
$$
\end{prop}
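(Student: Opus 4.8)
The plan is to compare the two expressions termwise in $k$ and to exploit that the difference is a telescoping Riemann-sum type error controlled by the variation of $s \mapsto {\cal G}_{X^N_{[v_N s]}}(H)$ over intervals of length $1/v_N$. Write $\phi^N_t(H) = \frac{1}{v_N}\sum_{k=1}^{[v_N t]} {\cal G}^N_{X^N_{k-1}}(H)$ as in \eqref{increment}, and note that $\int_0^t {\cal G}_{X^N_{[v_N s]}}(H)\,ds = \frac{1}{v_N}\sum_{k=1}^{[v_N t]} {\cal G}_{X^N_{k-1}}(H) + r^N_t$, where $r^N_t$ is the boundary term coming from the fractional part $[v_N t] \le v_N t < [v_N t]+1$, bounded in absolute value by $\frac{1}{v_N}\sup_{x}|{\cal G}_x(H)| \le \alpha(H)/v_N$ by Lemma~\ref{extension.e} and \eqref{alpha}. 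Hence
$$
\sup_{t\le T}\Big|\phi^N_t(H) - \int_0^t {\cal G}_{X^N_{[v_N s]}}(H)\,ds\Big| \le \frac{1}{v_N}\sum_{k=1}^{[v_N T]}\big| {\cal G}^N_{X^N_{k-1}}(H) - {\cal G}_{X^N_{k-1}}(H)\big| + \frac{\alpha(H)}{v_N}.
$$

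Now I would first reduce to $H \in Vect({\cal H})$ and to $H \in {\cal R}_b$ separately, using the decomposition $C^2_{b,0} = Vect({\cal H}) + {\cal R}_b$ from \eqref{decomp} and linearity of both ${\cal G}^N_x$ and ${\cal G}_x$. For $H \in Vect({\cal H})$, Hypothesis {\bf (H1.3i)} gives $\sup_{x\in{\cal X}}|{\cal G}^N_x(H) - {\cal G}_x(H)| \to 0$, so each summand above is bounded by $\varepsilon_N := \sup_x|{\cal G}^N_x(H)-{\cal G}_x(H)|$, whence the whole sum is at most $T\varepsilon_N \to 0$; the bound is deterministic and uniform in $t\le T$, giving the almost sure (indeed uniform) statement. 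For $H \in {\cal R}_b$, Lemma~\ref{extension.e} precisely establishes that ${\cal G}^N_.(H) - {\cal G}_.(H) \to 0$ uniformly on ${\cal X}$ as well (this is the content of {\bf (H1.3)} extended to $C^2_{b,0}$), so the same deterministic bound $T\varepsilon_N + \alpha(H)/v_N \to 0$ applies. Combining the two pieces by linearity closes the estimate for arbitrary $H \in C^2_{b,0} = C^0_{b,2}$.

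The one subtlety to handle carefully is the passage from the pointwise-in-$k$ bound to the uniform-in-$t$ bound: since for $t \le T$ the partial sums $\sum_{k=1}^{[v_N t]}$ are all dominated by $\sum_{k=1}^{[v_N T]}$ of the nonnegative quantities $|{\cal G}^N_{X^N_{k-1}}(H) - {\cal G}_{X^N_{k-1}}(H)|$, and the Riemann boundary error $r^N_t$ is uniformly $O(1/v_N)$, there is no obstruction — the supremum over $t$ is absorbed into the single bound $T\varepsilon_N + \alpha(H)/v_N$. I expect the main (and only real) work to be verifying that $r^N_t$ is uniformly small, which needs the boundedness $\sup_x|{\cal G}_x(H)| < \infty$ on $\overline{{\cal X}}$ — available from {\bf (H1.3ii)} and \eqref{alpha} — and checking that $\varepsilon_N := \sup_{x\in{\cal X}}|{\cal G}^N_x(H) - {\cal G}_x(H)| \to 0$ holds for all $H\in C^2_{b,0}$, which is exactly Lemma~\ref{extension.e}. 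Since all bounds are deterministic, the almost-sure claim (and the stronger uniform-in-$t$ claim) follow at once, with no need for any martingale or concentration argument.
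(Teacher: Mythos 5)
Your proposal is correct and follows essentially the same route as the paper: rewrite the sum as a Riemann integral of $s\mapsto{\cal G}^N_{X^N_{[v_Ns]}}(H)$ (or, equivalently as you do, handle the interpolation error on the ${\cal G}$ side) with a boundary term bounded by $\alpha(H)/v_N$ via \eqref{alpha}, and conclude from the uniform convergence $\sup_{x\in{\cal X}}|{\cal G}^N_x(H)-{\cal G}_x(H)|\to 0$ supplied by Lemma~\ref{extension.e}, everything being deterministic and uniform in $t\le T$. The only difference is your preliminary splitting of $H$ into $Vect({\cal H})+{\cal R}_b$, which is harmless but redundant since Lemma~\ref{extension.e} already gives {\bf (H1.3)} on all of $C^2_{b,0}$.
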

 
 \begin{proof} Using \eqref{alpha},  we have
$${1\over v_{N}} \sum_{k=1}^{[v_{N}t]} {\cal G}^N_{X^N_{k-1}}(H)  = \int_{0}^t {\cal G}^N_{X^N_{[v_{N}s]}}(H)\, ds - \int_{\frac{[v_{N} t]}{v_{N}}}^t {\cal G}^N_{X^N_{[v_{N}s]}}(H)\, ds=\int_{0}^t {\cal G}^N_{X^N_{[v_{N}s]}}(H)\, ds + {\cal O}\left(\alpha(H)\over v_{N}\right).$$
\me 
Then
$$
\sup_{t\leq T}\Big|\phi^N_{t}(H)- \int_{0}^t {\cal G}_{X^N_{[v_{N}s]}}(H) ds\Big|  \leq T\, \sup_{x\in {\cal X}}|{\cal G}^N_{x}(H)-{\cal G}_{x}(H)|+  {\cal O}\left(\alpha(H)\over v_{N}\right)\label{control}
$$
and the conclusion follows from ${\bf (H1.3i})$, which holds for $H$ thanks to    Lemma \ref{extension.e}. 
\end{proof}

\bi We define on the canonical space $\mathbb{D}([0,\infty),  {\cal X})$  a triplet which   characterizes  the limiting values of the sequence $\,(X^N_{[v_{N}.]}, N\in \mathbb{N})$. Using the measurability and boundedness of 
$x\rightarrow {\cal G}_{x}(f)$  for $x\in {\cal X}$ and  $f\in C^2_{b,0}$ and  the truncation function $h_{0}$  introduced in \eqref{troncationjacod},  we define for any $\omega=(\omega_s, s\geq 0) \in \mathbb D([0,\infty), {\cal X})$ the functionals
    \begin{equation}\label{carac}
 \left.
 \begin{aligned}
 B_t(\omega)&=\int_0^t \Big({\cal G}_{\omega_{s}}(h^1_{0}), \cdots, {\cal G}_{\omega_{s}}(h^d_{0})\Big) \,ds,\\
\widetilde  C_t^{ij}(\omega)&=\int_0^t {\cal G}_{\omega_{s}}(h^i_{0} h^j_{0})\, ds, \\
\nu_t(\omega,H)&=\int_0^t  {\cal G}_{\omega_{s}}(H \un_{{\cal U}})\,ds =\int_0^t \int_{{\cal U}^*} H(u) \mu_{\omega_{s}}(du) \,ds
\end{aligned}
\right\} 
\end{equation}
for any $H\in C_b(\R^d,\R)$ such that $H(u)=o(\vert u\vert^2)$. The last identity comes from \eqref{mesidt}.
  
 \me As in Chapters II.  2 \& 3 in  \cite{Jacod} adapted to the state space $\overline{\cal X}$ (instead of $\mathbb{R}^d$), the characteristic triplet associated with the semimartingale $X^N$ is
 given  for $i,j\in \{1, \ldots,d\}$ by
    \begin{equation}
 \left.
 \begin{aligned}
 B^N_{t}&=\sum_{k\leq [v_{N}t]} \E(h_0(U^N_{k})| {\cal F}^N_{k-1})=(\phi^N_{t}(h^1_{0}),\cdots, \phi^N_{t}(h^{d}_{0}))\\
\widetilde C^{N,ij}_{t}&= \sum_{k\leq [v_{N}t]} \Big(\E(h^i_{0}(U^N_{k}) h^j_{0}(U^N_{k})| {\cal F}^N_{k-1})  - \E(h^i_{0}(U^N_{k}) | {\cal F}^N_{k-1})  \E(h^j_{0}(U^N_{k}) | {\cal F}^N_{k-1}) \Big) \\
\phi^N_{t}(H) &=  \sum_{k\leq [v_{N}t]} \E(H(U^N_{k})  | {\cal F}^N_{k-1}),
\end{aligned}
\right\} \label{caractlim}
\end{equation}
where $U^N_{k}=X^N_{k} - X^N_{k-1}$ and $H$ is a continuous bounded function on $\R^d$ vanishing in a neighborhood of $0$.
Proposition \ref{key-prop} implies the convergence of the characteristics, as stated in the next proposition.
  \begin{prop}
\label{prop-convergenceh}
  For any $T>0$ and any $i,j =1,\cdots, d$
  and any $H\in C_b({\cal U},\R)$
 equal to $0$  in some  neighborhood of $0$, we have the following almost-sure convergences
 
  \be
 && \sup_{t\leq T}\, \left|\   B^{N,i}_{t} - B^i_{t}\circ X^N_{[v_{N}.]}\right| \stackrel{N\to \infty}{\longrightarrow} 0 ;\label{condB}\\
 && \sup_{t\leq T}\, \left| \widetilde C^{N,ij}_{t}- \widetilde C^{ij}_{t}\circ X^N_{[v_{N}.]}\right|  \stackrel{N\to \infty}{\longrightarrow} 0  ;
  \label{condC}\\
 && \sup_{t\leq T}\, \left|\phi^N_{t}(H) - \nu_{t}(X^N_{[v_{N}.]}, H)\right|   \stackrel{N\to \infty}{\longrightarrow} 0.\label{condNu}\ee
\end{prop}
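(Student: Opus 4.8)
The plan is to reduce all three convergences to the single, already-established estimate in Proposition \ref{key-prop}, namely that for any $H\in C^2_{b,0}$,
$$\sup_{t\leq T}\Big|\phi^N_{t}(H)-\int_0^t {\cal G}_{X^N_{[v_Ns]}}(H)\,ds\Big|\xrightarrow[N\to\infty]{}0\quad\text{a.s.}$$
The point is simply to observe that each of the three quantities on the left of \eqref{condB}--\eqref{condNu} is, up to a negligible remainder, of the form $\phi^N_t(H)$ for a suitable $H\in C^2_{b,0}$, and that the corresponding limiting characteristic is exactly $\int_0^t{\cal G}_{X^N_{[v_Ns]}}(H)\,ds$ composed in the appropriate way. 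So first I would deal with \eqref{condB}: here $H=h_0^i$, which lies in $C^2_{b,0}$ (indeed $h_0$ coincides with the identity near $0$), the discrete characteristic $B^{N,i}_t$ equals $\phi^N_t(h_0^i)$ by the very definition in \eqref{caractlim}, and the limiting functional $B^i_t\circ X^N_{[v_N.]}=\int_0^t{\cal G}_{X^N_{[v_Ns]}}(h_0^i)\,ds$ by \eqref{carac}. Thus \eqref{condB} is Proposition \ref{key-prop} applied to $H=h_0^i$, verbatim.

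For \eqref{condNu}, with $H\in C_b({\cal U},\R)$ vanishing in a neighbourhood of $0$, I note that such $H$ belongs to ${\cal R}_b$ (trivially $H(u)=o(|u|^2)$ since $H\equiv 0$ near $0$), hence to $C^2_{b,0}$, and again $\phi^N_t(H)$ is literally the quantity in \eqref{caractlim} while $\nu_t(X^N_{[v_N.]},H)=\int_0^t{\cal G}_{X^N_{[v_Ns]}}(H\un_{\cal U})\,ds$ by \eqref{carac}; since $H$ is already supported in ${\cal U}$ the indicator is immaterial, so \eqref{condNu} is once more a direct instance of Proposition \ref{key-prop}. The only genuine work is \eqref{condC}. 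Writing $a^{N,i}_k=\E(h_0^i(U^N_k)\mid {\cal F}^N_{k-1})$, the discrete modified second characteristic is
$$\widetilde C^{N,ij}_t=\sum_{k\leq[v_Nt]}\Big(\E\big((h_0^ih_0^j)(U^N_k)\mid{\cal F}^N_{k-1}\big)-a^{N,i}_k a^{N,j}_k\Big)=\phi^N_t(h_0^ih_0^j)-\sum_{k\leq[v_Nt]}a^{N,i}_k a^{N,j}_k,$$
so I must show the correction term $\sum_{k\leq[v_Nt]}a^{N,i}_k a^{N,j}_k$ vanishes uniformly in $t\leq T$. This is where the time scale $v_N\to\infty$ enters decisively: by \eqref{majj} (or directly, since $h_0^i\in C^2_{b,0}$ and $h_0^i(u)=O(|u|)$, so $|h_0^i|\leq C|h|$ pointwise on a bounded set and $\|\,|h_0^i|\,\|_h<\infty$ — more simply $\|h_0^ih_0^j\|_h<\infty$), we have $|{\cal G}^N_x(h_0^i)|\leq \alpha(|h|^2)\,\| |h_0^i|\, \|_{h'}$ bounded uniformly, hence $|a^{N,i}_k|=\frac1{v_N}|{\cal G}^N_{X^N_{k-1}}(h_0^i)|\leq C/v_N$; therefore
$$\Big|\sum_{k\leq[v_Nt]}a^{N,i}_k a^{N,j}_k\Big|\leq [v_Nt]\cdot\frac{C^2}{v_N^2}\leq \frac{C^2 T}{v_N}\xrightarrow[N\to\infty]{}0$$
uniformly in $t\leq T$. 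Combining this with Proposition \ref{key-prop} applied to $H=h_0^ih_0^j\in C^2_{b,0}$ (whose limiting integral $\int_0^t{\cal G}_{X^N_{[v_Ns]}}(h_0^ih_0^j)\,ds$ is exactly $\widetilde C^{ij}_t\circ X^N_{[v_N.]}$ by \eqref{carac}) yields \eqref{condC}.

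The main obstacle is the bookkeeping in \eqref{condC}: one has to see that subtracting the product $a^{N,i}_k a^{N,j}_k$ at each step is asymptotically irrelevant because each factor is $O(1/v_N)$ while there are only $O(v_N)$ terms, giving an $O(1/v_N)$ total; no other subtlety arises, since everything else is a transcription of definitions and an invocation of Proposition \ref{key-prop}. I would also remark at the end that all convergences are almost sure and the null sets can be taken common across the countably many $(i,j)$ and a sequence $T\uparrow\infty$, and that the extension of ${\cal G}_x$ to $C^2_{b,0}$ used throughout is the one furnished by Lemma \ref{extension.e}, so that ${\cal G}_x(h_0^i)$, ${\cal G}_x(h_0^ih_0^j)$ and ${\cal G}_x(H)$ are all well defined and bounded.
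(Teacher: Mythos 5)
Your argument is correct and is essentially the paper's own proof: \eqref{condB} and \eqref{condNu} are direct instances of Proposition \ref{key-prop} (for $h_0^i$ and for $H\in{\cal R}_b$ respectively), and \eqref{condC} follows from Proposition \ref{key-prop} applied to $h_0^ih_0^j$ together with the observation that the product term $\sum_{k\leq[v_Nt]}a^{N,i}_ka^{N,j}_k$ is $O(1/v_N)$ uniformly on $[0,T]$. The only blemish is the parenthetical justification of the bound $|a^{N,i}_k|\leq C/v_N$: \eqref{majj} does not apply since $h_0^i\notin{\cal R}_b$ (and $\|h_0^i\|_h=\infty$); the correct citation is simply $\alpha(h_0^i)<\infty$ from \eqref{alpha}, i.e.\ Lemma \ref{extension.e}, which is what the paper uses and which you in effect invoke in your closing remark.
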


\me
\begin{proof}  From Proposition \ref{key-prop}, we immediately obtain  the first and last convergences and
 $$\sup_{t\leq T}\, \left|\phi^N_{t}(h^i_{0}h^j_{0}) - \widetilde C^{ij}_{t}\circ X^N_{[v_{N}.]}\right| \rightarrow_{N\to \infty} 0 \quad \text{a.s}.$$
 So it remains to be replaced $\phi^N_{t}(h^i_{0}h^j_{0})$ by $\widetilde C^{N,ij}_{t}$. We have
$$| \E(h^k_{0}(U^N_{k}) | {\cal F}^N_{k-1}) | \leq {1\over v_{N} } \sup_{N, x\in {\cal X}}  \big\vert {\cal G}^N_{x}(h_0^k) \big\vert  \leq  \alpha(h_0^k)  .{1\over v_{N} }$$ 
and  $\alpha(h_0^k)<\infty$ from \eqref{alpha}.  Hence the second term in $\widetilde C^{N,ij}_{t}$ tends to $0$ as $N \to \infty$, which yields the result.
\end{proof}

We are now in position to provide a proof of Theorem \ref{thm-tightness}. In order to apply
 Theorem 3.9 IX  p543  in \cite{Jacod} and get the tightness, we need  to check the  strong majoration hypothesis
and the condition on big jumps required in its statement.

 First, 
if $H\in C^0_{b,2}$ then $\mathcal G_. H$ is bounded and
there exists a positive constant $A$ such that for any $\omega \in \mathbb D([0,\infty), {\cal X})$,

\be
\label{domm}
\sum_{i=1}^d \Var(B^i(\omega))_{t}  + \sum_{i,j=1}^d \tilde C^{ij}_t(\omega)  + \nu_t(w, r)\leq A \, t,
\ee
where  $\,\Var(X)_{t}$ denotes the total variation of $X$  on $[0,t]$ and $r(u):=| u|^2 \wedge 1$.
 
Second, to  control the big jumps, we use  the fact that 
$\nu_{t}(.,\un_{B(0,b)^c}) \leq t\, \|{\cal G}_{.}(\un_{B(0,b)^c})\|_{\infty}$,
which tends to $0$ as $b$ tends to infinity from \eqref{queueG}.  We thus obtain
\be
\label{biggj}
\lim_{b \uparrow \infty} \sup_{w\in  \mathbb D([0,\infty), {\cal X})}  \nu_t(w, \un_{B(0,b)^c} )=0.
\ee

The tightness of  $\,(X^N_{[v_{N}.]}, N\in \mathbb{N})$  follows from \eqref{condB}-\eqref{biggj} and from the tightness of the initial condition, by an application of    the mentioned theorem in \cite{Jacod}.   
\subsubsection{Proofs of Theorems \ref{identification} and \ref{main}}

\bi Let us now assume the additional Hypothesis  {\bf (H2)}. We wish to identify the limiting values of $\,(X^N_{[v_{N}.]}, N\in \mathbb{N})$ as solutions of the stochastic differential system  \eqref{eds}. We first need to continuously extend   the limiting characteristic triplet to the boundary.
\begin{lem}
(i)  For any  $H\in {\cal R}_b$, the map $  x\in  {\cal X} \rightarrow{\cal G}_{x}(H) $ is continuous and extendable by continuity to $ \overline {\cal X}.$ Moreover
\be
\label{max}
\sup_{x\in \overline{\cal X}}  | {\cal G}_{x} (H)|\leq \alpha(H)  <+\infty.
\ee
(ii)  For any  $H\in {\cal R}_b$ and $x\in \overline{{\cal X}}$, 
\be
\label{idmeas}
{\cal G}_{x}(H)=\int_{V} H(K(x,v)) \mu(dv)
\ee
and $ \int_{V}   1\wedge\vert K(.,v)\vert^2  \mu(dv)$ is bounded on $\overline{\cal X}$.
\end{lem}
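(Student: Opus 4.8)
The goal is to extend the previously constructed objects (the functional $\mathcal G_x$ and the jump measures $\mu_x$) from $\mathcal X$ to its closure $\overline{\mathcal X}$, in a way compatible with the integral representation \eqref{h2} of Hypothesis {\bf (H2)}. The strategy is to combine the density/approximation machinery already set up in Lemma \ref{extension.e} and Lemma \ref{extension.mes} with a uniform continuity argument driven by the norm $\|\cdot\|_h$ and the uniform bound \eqref{majj}.

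\textbf{Part (i): continuous extension to $\overline{\mathcal X}$ and the bound \eqref{max}.} First I would treat a compactly supported $H\in\mathcal R_b$. As in the proof of Lemma \ref{extension.e}, write $H=|h|^2 g$ with $g\in C_c(\mathcal U,\mathbb R)$, $g(0)=0$, and approximate $g$ uniformly by $g_n$ with $H_n=|h|^2 g_n\in Vect(\mathcal H)$ and $\|H_n-H\|_h\to 0$. By {\bf (H1.1)}--{\bf (H1.3)} together with {\bf (H2.1)}, each $x\mapsto\mathcal G_x(H_n)$ is continuous on $\mathcal X$ and extends continuously to $\overline{\mathcal X}$. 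The estimate \eqref{majj} gives
$$
\sup_{x\in\mathcal X}|\mathcal G_x(H_n)-\mathcal G_x(H_m)|\le \alpha(|h|^2)\,\|H_n-H_m\|_h,
$$
so $(\mathcal G_\cdot(H_n))_n$ is Cauchy for the sup-norm over $\mathcal X$, hence (using the continuous extensions of each $\mathcal G_\cdot(H_n)$ to the compact-in-$\mathbb R^d$ pieces of $\overline{\mathcal X}$, or more simply uniform Cauchy convergence of continuous functions) the limit $\mathcal G_\cdot(H)$ is itself continuous on $\overline{\mathcal X}$, and \eqref{max} follows by passing to the limit in \eqref{alpha}. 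For general $H\in\mathcal R_b$ I would use the cutoff sequence $\varphi_n$ from Lemma \ref{extension.e}: $H\varphi_n$ is compactly supported in $\mathcal R_b$, $|\mathcal G_x(H)-\mathcal G_x(H\varphi_n)|\le C_n\|H\|_\infty$ uniformly in $x\in\mathcal X$ by {\bf (H0)} via \eqref{queueG}, so $\mathcal G_\cdot(H)$ is a uniform limit of continuous functions on $\overline{\mathcal X}$ and is therefore continuous there, with \eqref{max} preserved.

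\textbf{Part (ii): the integral representation on $\overline{\mathcal X}$.} For $x\in\mathcal X$ the identity $\mathcal G_x(H)=\int_{\mathcal U^*}H(u)\,\mu_x(du)$ is Lemma \ref{extension.mes}. To get \eqref{idmeas} on the boundary I would unfold {\bf (H2.2)}: apply \eqref{h2} to $H\in\mathcal R_b$. Since $H=o(|u|^2)$, Lemma \ref{lem-decomp} gives $\alpha^{h_0}_i(H)=0$, $\beta^{h_0}_{i,j}(H)=0$ and $\overline H^{h_0}=H$, so \eqref{h2} collapses to $\mathcal G_x(H)=\int_V H(K(x,v))\,\mu(dv)$ for every $x\in\overline{\mathcal X}$; in particular this holds for $x\in\mathcal X$, and taking $H=r=|u|^2\wedge 1$ (approximated from below by elements of $\mathcal R_b$, monotone convergence) shows $\int_V 1\wedge|K(x,v)|^2\,\mu(dv)=\mathcal G_x(r)$, which is bounded on $\overline{\mathcal X}$ by \eqref{max}. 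The only subtlety is that on $\mathcal X$ we now have two representations of $\mathcal G_x(H)$ — one via $\mu_x$ and one via $(K,\mu)$ — but this is exactly the consistency built into {\bf (H2)} and need not be reconciled further; what we record is \eqref{idmeas} itself together with the boundedness claim.

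\textbf{Main obstacle.} The delicate point is the passage to the boundary in Part (i): $\overline{\mathcal X}$ need not be compact, so "uniform limit of continuous functions is continuous" must be invoked carefully (locally, on each bounded closed piece of $\overline{\mathcal X}$, which suffices for continuity), and one must make sure the approximation in $\|\cdot\|_h$ is genuinely uniform in $x$ — this is precisely what \eqref{majj}, \eqref{alpha} and {\bf (H0)}/\eqref{queueG} are designed to supply. Everything else is bookkeeping: linearity, the cutoff argument of Lemma \ref{extension.e}, and monotone convergence for the measure statements.
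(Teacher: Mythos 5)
Your Part (i) is essentially the paper's argument (approximate compactly supported elements of ${\cal R}_b$ in $\|\cdot\|_h$ by elements of $Vect({\cal H})\cap{\cal R}_b$, use \eqref{majj}--\eqref{alpha} for uniform Cauchyness, invoke {\bf (H2.1)} for the approximants, then the cutoff $\varphi_n$ together with {\bf (H0)}/\eqref{queueG} for general $H$), and it is correct; the worry about non-compactness of $\overline{\cal X}$ is harmless since a uniform limit of continuous functions is continuous on any metric space.

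Part (ii), however, has a genuine gap. You write that you ``apply \eqref{h2} to $H\in{\cal R}_b$'' and conclude directly that ${\cal G}_x(H)=\int_V H(K(x,v))\,\mu(dv)$ for every $x\in\overline{\cal X}$. But Hypothesis {\bf (H2.2)} asserts \eqref{h2} only for $H\in{\cal H}$ (hence, by linearity, for $H\in Vect({\cal H})$); a generic $H\in{\cal R}_b$ does not belong to $Vect({\cal H})$, so \eqref{h2} is not available for it --- indeed, extending this identity beyond $Vect({\cal H})$ is precisely the content of statement (ii), and assuming it is circular. Moreover, for boundary points $x\in\overline{\cal X}\setminus{\cal X}$ the quantity ${\cal G}_x(H)$ for $H\in{\cal R}_b$ is only \emph{defined} as the continuous extension constructed in (i), so the identity there can only be obtained by a limiting argument, not by quoting a hypothesis stated for ${\cal H}$. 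The missing steps are exactly those of the paper: (a) for $H\in Vect({\cal H})\cap{\cal R}_b$ one has $\alpha^{h_0}(H)=\beta^{h_0}(H)=0$ and $\overline H^{h_0}=H$, so \eqref{h2} does collapse to \eqref{idmeas}; (b) for compactly supported $H=|h|^2g\in{\cal R}_b$ one approximates $g$ uniformly by $g_n$ with $|h|^2g_n\in Vect({\cal H})\cap{\cal R}_b$ (this is {\bf (H1.2)}) and passes to the limit on \emph{both} sides of \eqref{idmeas}, the left side via the uniform convergence from (i)/\eqref{alpha} and the right side via dominated convergence using $\int_V 1\wedge|K(x,v)|^2\,\mu(dv)<\infty$; (c) general $H\in{\cal R}_b$ then follows by the monotone cutoff $H\varphi_n$. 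Your derivation of the boundedness of $\int_V 1\wedge|K(\cdot,v)|^2\,\mu(dv)$ via monotone approximation of $r(u)=1\wedge|u|^2$ by elements of ${\cal R}_b$ is the right idea, but it sits downstream of the unproved identity (and also needs a bound on $\sup_n\alpha(r_n)$, obtained by splitting on $B(0,b)$ and $B(0,b)^c$ using $|h|^2$ and {\bf (H0)}), so as written it does not close the argument.
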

\begin{proof} Let $H \in {\cal R}_b$. Using the sequences $\varphi_n$  and $(H_n)_n$ defined in the proof of Lemma \ref{extension.e} and approximating $\varphi_nH$ for $\parallel . \parallel_h$ by $H_n \in Vect({\cal H}) \cap {\cal R}_b$ as in the proof of Lemma \ref{extension.mes},  we obtain 
$$\sup_{x \in {\cal X}, N\geq 1}  \big\vert {\cal G}_x^N(H)-{\cal G}^N_x(H_n)\big\vert \leq \parallel H \|_{\infty}C_n + \parallel \varphi_n H-H_n \parallel_h \alpha(\vert h\vert^2),$$
which tends to $0$ as $n\rightarrow \infty$. Letting $N\rightarrow \infty$ ensures that ${\cal G}_.H_n$ converges uniformly to ${\cal G}_.H$ as $n\rightarrow \infty$.  
Combining  this with  ${\bf (H2.1)}$ applied to $H_n$, we deduce that ${\cal G}_.H$ is continuous on ${\cal X}$ and extendable by continuity to $\overline{\cal X}$. Moreover  \eqref{alpha}  yields \eqref{max} by continuity, which proves $(i)$. 

For $(ii)$, we first  consider $H\in Vect( {\cal H})\cap {\cal R}_b$. Then  $\alpha^{h_0}(H)=\beta^{h_0}(H)=0$, $\overline{H}^{h_0}=H$ and  ${\bf (H2.2)}$ 
 ensures that \eqref{idmeas}  holds for $H$. Let us now extend this identity to $H\in {\cal R}_{b}$ with compact support. We note that  $H=|h|^2 g$ with $g\in C_{c}({\cal U}, \mathbb{R})$. By ${\bf (H1.2)}$,  the function $g$ is uniformly approximated by a sequence $g_{n}$ such that 
$|h|^2 g_{n} \in Vect({\cal H}) \cap {\cal R}_b$. The identity \eqref{h2} implies that \eqref{idmeas} holds for any $|h|^2 g_{n}$ and
$$\forall x\in \overline{\cal X}, \quad {\cal G}_{x}(|h|^2 g_{n})=\int_{V} (|h|^2 g_{n})(K(x,v)) \mu(dv).$$
We let  $n$ tend to infinity in both terms using  \eqref{alpha} and  the assumption $ \int_{V}   1\wedge\vert K(x,v)\vert^2  \mu(dv)<+\infty$. The extension to ${\cal R}_b$ follows  again from a monotone approximation by the compactly  supported functions $H\varphi_n$, which ends the proof.
\end{proof}
This lemma allows us to extend the definitions of the characteristics and the identities of \eqref{carac} to any $w\in \mathbb D([0,\infty), \overline{\cal X})$. Moreover $(i)$ ensures that    $w\in \mathbb D([0,\infty), \overline{\cal X})\rightarrow (B_t(\omega),
\widetilde  C_t(\omega),
  \nu_t(\omega,H))$ is continuous and  that the dominations 
\eqref{domm} and \eqref{biggj} extend  from ${\cal X}$ to $\overline{{\cal X}}$.
 We can then apply
 \cite[Theorem 2.11, chapter IX, p530]{Jacod} on the closed set $\overline {\cal X}$ for the identification.
We obtain that
any limiting value of the law of $(X^N_{[v_N.]})_N$ is a  solution   of the martingale problem  on the canonical space $ \mathbb D([0,\infty), \overline{\cal X})$ with characteristic triplet $(B,C,\nu)$, where $$C_t^{ij}=\widetilde{C}_t^{ij}-\nu_t(.,h_0^ih_0^j).$$

Finally, using {\bf (H2.2)} for $H\in \{h_0^i,h_0^{i}h_0^{j}\}$ and \eqref{idmeas}, the characteristics   in \eqref{carac} can be written as
\ben
B_t(w)&=&\int_0^tb(w_s)ds \nonumber \\
C_t^{ij}(w)&=&\int_0^t \left(\sum_{k=1}^{d} 	\sigma_{i,k}(w_s)\sigma_{j,k}(w_s) \right)ds \nonumber \\
\nu_t(w,H) &=& \int_0^t \int_{V} H(K(w_s,v)) \mu(dv)ds, 
\een
for  any $w \in \mathbb D([0,\infty), \overline{\cal X})$.
By \cite[Chapter III,     Theorem 2.26 p157]{Jacod}, the set of solutions of  the martingale problem  with characteristic triplet $(B,C,\nu)$ coincides with the set of weak solutions of the stochastic differential equation \eqref{eds}.   The proof of Theorem \ref{identification} is now complete. \\
 
  \bi To conclude the proof of  the convergence, we  remark that \bl{the} uniqueness hypothesis ({\bf H3}) guarantees   $(iii)$ in \cite{Jacod} Theorem 3.21, chapter IX, p.546]. The other points $(i-vi)$ of this theorem have been checked above and Theorem \ref{main} follows.



\section{Wright-Fisher process with selection in  L\'evy environment}
\label{WF}
\subsection{The discrete model}
Let us consider the framework of the Wright-Fisher model: at each generation, the alleles of  a fixed size population are sampled from the previous generation. We consider a population of $N$ individuals characterized by some allele.  The number of individuals carrying this allele is a  process
$\,(Z^N_{k}, k\in \mathbb{N})\,$ whose dynamics depends on the environment.
When $N\geq 1$ is fixed, we consider the coupled process describing the discrete time dynamics of the population process and the environment process. 
 It is recursively defined for $k\geq 0$ by
\be \begin{cases}
Z^N_{k+1} =  \sum_{i=1}^N {\cal E}^N_{k,i}(Z^N_{k}/ N, E^N_{k}),\\
\smallskip \\
S^N_{k+1} = S^N_{k}+ E^N_{k}, \label{def-env}
\end{cases}
\ee
and $\,S^N_0 = 0, Z^N_0 =[NZ_0]$, $Z_0\in [0,1]$ is a finite random variable, $(E^N_{k})_{k}$ are independent and identically distributed with values in $(-1, +\infty)$ and the  family of 
random variables $\,\big( ({\cal E}^N_{k,i}(z,w), (z,w)\in [0,1]\times (-1,\infty)); k\ge 1, i\geq 1\big)$ 
 are independent. Moreover for each $(z,w)\in [0,1]\times (-1,\infty)$,
  the random variables $({\cal E}^N_{k,i}(z,w);  k\ge 1, i  \geq 1\big)$ are identically distributed as a  Bernoulli random variable ${\cal E}^N(z,w)$ defined by
$$\p({\cal E}^N(z,w) = 1) = p(z,w)\ ;\ \p({\cal E}^N(z,w) = 0) = 1 - p(z,w).$$
We also assume that $Z_0$,  $\big( ({\cal E}^N_{k,i}(z,w), (z,w)\in [0,1]\times (-1,\infty)); k\ge 1, i\geq 1\big)$ and  $(E^N_{k}, k\geq 0)$ are independent.\\
Moreover $p$ is a $C^3$-function   from $[0,1]\times (-1,\infty)$ to $[0,1]$ verifying  $p(z,0) = z$ for any $z\in [0,1]$. 
A main example,   developed in Section \ref{ex.p},
is given by $p(z,w) = z(1+w)/(z(1+w) + 1-z)$ and extends the classical Wright Fisher model  with rare selection to random environments.\\

Following  \cite{Jacod} [chap.VII Corollary 3.6,p.415], we state an assumption for the random walk  $S^N_{[N.]}$ to converge in  law to a L\'evy process with characteristics $(\alpha_{\e}, \beta_{\e}, \nu_{\e})$.
Let us 
consider  a truncation function $h_{\e}$ defined on $(-1,+\infty)$,  i.e.  continuous and  bounded and  satisfying  $h_{\e}(w)=w$ in a neighborhood of $0$. 
For convenience, we also assume that  $h_{\e}(w)\ne 0$ for any $w\ne 0$. 

\begin{assumptionA} 
There exist $\alpha_{\e}\in \mathbb{R}$, $\sigma_{\e}\ge 0$ and a measure $\nu_{\e}$ on $(-1, +\infty)$ satisfying $\int_{(-1,+\infty)}(w^2\wedge 1) \nu_{\e}(dw) < +\infty$ such that 
$$  \lim_{N\to \infty } N\, \E(h_{\e}(E^N))=\alpha_{\e} \ ;\ \lim_{N\to \infty } N \,\E(h_{\e}^2(E^N))=\beta_{\e} = \sigma_{\e}^2 + \int_{(-1,\infty)} h_{\e}^2(w)\nu_{\e}(dw), $$
$$ \lim_{N\to \infty } N\, \E(f(E^N))=  \int_{(-1,\infty)} f(w) \nu_{\e}(dw),$$
for any $f$ vanishing in a neighborhood of $0$, continuous and    bounded. 
\end{assumptionA}

\me The small fluctuations of the environment are given by $\sigma_{\e}$, while the dramatic events are given by the jump measure $\nu_{\e}$. Negative jumps will correspond to dramatic disadvantages of allele $A$ and an usual set of selection coefficient  is   $(-1,\infty)$, as illustrated  in Section \ref{ex.p}. \\
The limiting 
 environment  process $Y$ can thus be   defined by   \be
\label{environnement}
Y_t = \alpha_{\e} t + \int_0^t \sigma_{\e} dB^{\e}_s +  \int_0^t\int_{(-1,+\infty)}  h_{\e}(w) \widetilde N^{\e}(ds, dw) +  \int_0^t \int_{(-1,+\infty)}(w - h_{\e}(w))  N^{\e}(ds, dw),
\ee
where $B^{\e}$ is a Brownian motion and $N^{\e}$ is a Poisson point measure on $\mathbb{R}_{+}\times (-1,+\infty)$ independent of $B^{\e}$  with intensity measure $\nu_{\e}$. By construction, this L\'evy process has jumps larger than $-1$. \\

\me
 Let us  first prove a consequence  of Assumption {\bf A} which will be needed in  the proof of the next theorem.
  \begin{lem}
\label{CV-EN}   Let $g \in C^3([0,1]\times (-1,\infty), \R)$ bounded and satisfying $g(z,0)=0$ for any $z\in[0,1]$. Then, under Assumption {\bf A},
$$N\E(g(z,E^N)) \stackrel{N\rightarrow \infty}{\longrightarrow} {\cal B}_z (g), $$
uniformly for $z\in [0,1]$, with
$${\cal B}_z (g)=\alpha_{\e} \frac{\partial g}{\partial w} (z,0) + {\beta_{\e} \over 2} \frac{\partial^2 g}{\partial w^2} (z,0)+\int_{(-1,\infty)}
\widehat{g}(z,w)\nu_{\e}(dw)$$
and 
 $\widehat{g}(z,w)=g(z,w)-h_{\e}(w)\frac{\partial g}{\partial w} (z,0)-{h_{\e}(w)^2 \over 2} \frac{\partial^2 g}{\partial w^2} (z,0)$.
\end{lem}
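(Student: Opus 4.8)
The plan is to reduce the statement to the three limits supplied by Assumption~\textbf{A} via a second-order Taylor expansion of $g$ in the environment variable $w$ around $0$, uniformly in $z$. First I would write, for each $(z,w)\in[0,1]\times(-1,\infty)$,
\[
g(z,w) = w\,\frac{\partial g}{\partial w}(z,0) + \frac{w^2}{2}\,\frac{\partial^2 g}{\partial w^2}(z,0) + R(z,w),
\]
which is legitimate because $g(z,0)=0$ kills the zeroth-order term, and the first two derivatives in $w$ are continuous on the compact-in-$z$, locally-compact-in-$w$ domain. The idea is then to re-express $w$ and $w^2$ through the truncation function $h_\e$: since $h_\e(w)=w$ in a neighborhood of $0$, the differences $w-h_\e(w)$ and $w^2-h_\e(w)^2$ vanish near $0$, are bounded, and are continuous, so they fall into the class of functions handled by the third limit in Assumption~\textbf{A}. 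Reorganizing the expansion around $h_\e$ instead of around $Id$ produces exactly $\widehat g(z,w)$ as the "remainder plus correction" term, namely
\[
g(z,w) = h_\e(w)\,\frac{\partial g}{\partial w}(z,0) + \frac{h_\e(w)^2}{2}\,\frac{\partial^2 g}{\partial w^2}(z,0) + \widehat g(z,w),
\]
and $\widehat g(z,\cdot)$ is continuous, bounded, and $\widehat g(z,w)=o(1)$ — in fact $O(w^3\wedge 1)$ — as $w\to 0$, uniformly in $z$ by the $C^3$ assumption and compactness in $z$.

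Next I would apply $N\,\E(\cdot)$ (recall $E^N$ has the law of each $E^N_k$) term by term. The first term gives $\frac{\partial g}{\partial w}(z,0)\cdot N\E(h_\e(E^N)) \to \alpha_\e\frac{\partial g}{\partial w}(z,0)$; the second gives $\frac12\frac{\partial^2 g}{\partial w^2}(z,0)\cdot N\E(h_\e^2(E^N)) \to \frac{\beta_\e}{2}\frac{\partial^2 g}{\partial w^2}(z,0)$; both limits are uniform in $z$ because the prefactors $\frac{\partial g}{\partial w}(z,0)$ and $\frac{\partial^2 g}{\partial w^2}(z,0)$ are bounded in $z$ and the scalar limits from Assumption~\textbf{A} do not involve $z$. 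The third term, $N\E(\widehat g(z,E^N))$, should converge to $\int_{(-1,\infty)}\widehat g(z,w)\,\nu_\e(dw)$; the subtlety is that $\widehat g(z,\cdot)$ does not literally vanish on a \emph{fixed} neighborhood of $0$ in the way the last hypothesis of Assumption~\textbf{A} requires — it only vanishes to third order at $0$. So I would split $\widehat g = \widehat g\,\un_{\{|w|>\delta\}} + \widehat g\,\un_{\{|w|\le\delta\}}$: the first piece is handled directly by the vanishing-near-$0$ clause of Assumption~\textbf{A}, and for the second piece I would bound $|\widehat g(z,w)|\le C(w^2\wedge 1)$ (or $C|w|^3$ for small $w$) uniformly in $z$, so that $N\E\big(|\widehat g(z,E^N)|\un_{\{|E^N|\le\delta\}}\big)$ is controlled by $C\cdot N\E\big((E^N)^2\wedge 1\big)\cdot(\text{something small in }\delta)$; combined with the corresponding tail bound $\int_{|w|\le\delta}(w^2\wedge 1)\nu_\e(dw)\to 0$ as $\delta\to 0$, letting first $N\to\infty$ and then $\delta\to 0$ gives the convergence, uniformly in $z$.

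The main obstacle is exactly this last point: controlling $N\E(\widehat g(z,E^N))$ near $0$ uniformly in $z$, since Assumption~\textbf{A} only directly gives convergence of $N\E(f(E^N))$ for $f$ vanishing on a \emph{full} neighborhood of $0$, whereas $\widehat g$ merely has a third-order zero. The key quantitative input making this work is that $N\E(h_\e^2(E^N))$ converges (hence $N\E\big((E^N)^2\wedge 1\big)$ is bounded, using $h_\e(w)\sim w$ near $0$ and $h_\e(w)\ne0$ elsewhere to compare $h_\e^2$ with $w^2\wedge 1$ up to bounded factors), which furnishes the needed uniform-in-$N$ bound; the Taylor remainder estimate $|\widehat g(z,w)|\le C|w|^3$ for $|w|\le 1$ with $C$ depending only on $\sup$-norms of third derivatives of $g$ then provides the smallness in $\delta$. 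Everything else is bookkeeping: collecting the three limits yields $N\E(g(z,E^N))\to \mathcal B_z(g)$ uniformly in $z\in[0,1]$, which is the claim.
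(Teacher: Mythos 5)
Your decomposition of $N\E(g(z,E^N))$ around $h_{\e}$ is exactly the paper's, and your treatment of the first two terms and of the small-$w$ part of $\widehat g$ (the Taylor bound $|\widehat g(z,w)|\le C\delta\, w^2$ for $|w|\le\delta$, combined with the uniform-in-$N$ bound on $N\E(h_{\e}^2(E^N))$) is sound. The gap is in the piece $\{|w|>\delta\}$: you assert it is "handled directly by the vanishing-near-$0$ clause of Assumption A" and that letting $N\to\infty$ and then $\delta\to 0$ gives the convergence \emph{uniformly in $z$}. But that clause of Assumption A concerns a single fixed test function $f$; applied to $f_z(w)=\widehat g(z,w)\un{|w|>\delta}$ (after replacing the indicator by a continuous cutoff, which you need since the clause requires continuity), it only yields convergence for each fixed $z$. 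Uniformity over $z\in[0,1]$ does not come for free, and a naive covering argument fails too, because $z\mapsto\widehat g(z,\cdot)$ need not be continuous in sup norm over the non-compact range $w\in(-1,\infty)$: the hypotheses give only $g\in C^3$ and $g$ bounded, so the $z$-modulus of continuity can degenerate as $w\to\infty$ or $w\to -1$ (think of $g(z,w)=\phi(w)\sin(zw)$ with $\phi$ smooth, bounded, vanishing near $0$). So the step you single out as "the main obstacle" (near $w=0$) is in fact the easy half; the delicate half, uniformity in $z$ away from $0$, is the one you pass over.

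The paper closes precisely this gap: it truncates to the compact rectangle $[0,1]\times[-1+\varepsilon_0,A]$, controlling the complement by $C' N\,\p(E^N\notin[-1+\varepsilon_0,A])$, which is small uniformly in $N$ by the last clause of Assumption A; on the compact part it uses that $\partial_z\bigl(\widehat g/h_{\e}^2\bigr)$ is bounded (this is where the $C^3$ regularity in $z$ really enters), together with $\sup_N N\E(h_{\e}^2(E^N))<\infty$, to show that the maps $z\mapsto N\E\bigl(\widehat g(z,E^N)\un{E^N\in[-1+\varepsilon_0,A]}\bigr)$ are uniformly Lipschitz in $z$, hence equicontinuous, and then upgrades the pointwise limit to a uniform one via Ascoli's theorem. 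To repair your argument you must add an equicontinuity (or finite-covering) step of this kind for the $\{|w|>\delta\}$ piece, after a further truncation near $w=-1$ and $w=+\infty$; the $\delta$-splitting alone only yields pointwise-in-$z$ convergence of $N\E(\widehat g(z,E^N))$.
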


\begin{proof} Indeed, we can decompose $N\E(g(z,E^N))$ as follows
$$N\E(g(z,E^N))=\frac{\partial g}{\partial w} (z,0)\, N\E(h_{\e}(E^N))+{1 \over 2} \frac{\partial^2 g}{\partial w^2} (z,0)\, N\E(h_{\e}(E^N)^2)+N\E(\widehat{g}(z,E^N)).$$
The  first two
 terms converge  uniformly as $N\rightarrow\infty$ by a direct application of Assumption {\bf A}. Moreover
the last part of Assumption {\bf A} can be extended to  any continuous function $f(w)=o(w^2)$ using a monotone approximation of  $f$ by functions vanishing in a neighborhood of $0$. Then the last term converges  for  fixed $z$ and 
it remains to prove that the convergence is  uniform on $[0,1]$.  First, let us consider a compact subset $K=[0,1]\times[-1+\varepsilon_0, A]$ of  $[0,1]\times(-1,\infty)$. As
$g$ is $C^3([0,1]\times (-1,\infty), \R)$, the function
$$(z,w) \rightarrow \frac{\widehat{g}(z,w)}{h_{\e}(w)^2}=\frac{g(z,w)-h_{\e}(w)\frac{\partial g}{\partial w} (z,0)}{h_{\e}(w)^2}-{1 \over 2} \frac{\partial^2 g}{\partial w^2} (z,0)$$
and its first derivative with respect to $z$ are well defined  on $[0,1]\times (-1,\infty)\setminus[0,1]\times \{0\}$ and extendable by continuity to $[0,1]\times (-1,\infty)$. Thus
the derivative of $\widehat{g}(z,w)/h_{\e}(w)^2$ with respect to $z$ is bounded on $K$. As $(N\E(h_{\e}(E^N)^2))_N$ is bounded by the second part of Assumption {\bf A},
there exists $C>0$ such that for any $N\geq 1$,
$$\bigg\vert N\E\left(\widehat{g}(z,E^N)1_{E^N\in [-1+\varepsilon_0, A]} \right)-N\E\left(\widehat{g}(z',E^N)1_{E^N\in [-1+\varepsilon_0, A]}\right)\bigg\vert \leq C\vert z-z'\vert.$$
Moreover, since all functions  involved in the definition of $\widehat{g}$ are bounded,  there exists $C'>0$ such that
$$\bigg\vert N\,\E\left (\vert \widehat{g}(z,E^N)\vert 1_{E^N\not\in [-1+\varepsilon_0, A]} \right)\bigg\vert \leq C'N\,\mathbb P(E^N\not\in [-1+\varepsilon_0, A])$$
and by the last part of Assumption {\bf A},
$$\lim_{\varepsilon_0\rightarrow 0, A\rightarrow \infty} \sup_N N\,\mathbb P(E^N\not\in [-1+\varepsilon_0, A])=\lim_{\varepsilon_0\rightarrow 0, A\rightarrow \infty} \nu_{\e}((-1,-1+\varepsilon_0)\cup(A,\infty))=0.$$
Combining the  last two inequalities, we obtain that  the family of functions $(N\,\E(\widehat{g}(.,E^N)))_N$ is uniformly equicontinuous on $[0,1]$ and the convergence  is uniform by Ascoli Theorem. 
\end{proof}
 

We can now generalize the classical convergence in law to the Wright-Fisher diffusion with selection to i.i.d. environments.

\subsection{Tightness and identification}

We are interested in the asymptotic behavior of the Markov chain  $$X^N_k=\left({Z^{N}_{k}\over N},  S^N_{k}\right),\quad k\in \mathbb N$$ when $N$ tends to infinity. This process 
 takes values in $\mathcal X=[0,1]\times \R$. 
 
\me For the statement, we introduce the drift
coefficient inherited from the fluctuations of the environment:
$$b_1(z)=\alpha_{\e}{\partial p \over \partial w} (z,0)+{\sigma_{\e}\over 2}{\partial^2 p \over \partial w^2 } (z,0)+\int_{(-1,\infty)} \left(p(z,w)-z-  h_{\e}(w)\frac{\partial p}{\partial w} (z,0)\right)\nu_{\e}(dw).$$ 

\begin{thm}
\label{WF-tension}
Under Assumption ${\bf A}$, the sequence of processes $\,\left(Z^{N}_{[N.]}/N,  S^N_{[N.]}\right)_N\,$ is tight   in $\mathbb D([0,\infty),[0,1]\times \R)$
and any  limiting value of this sequence is  solution of the following stochastic differential equation
\be
\begin{split} \label{EDSS}
 Z_t &=Z_0+\int_0^t b_{1}(Z_s)ds + \int_0^t \sqrt{Z_s(1-Z_s)} dB_s^{\D} + \sigma_{\e}\int_0^t {\partial p \over \partial w} (Z_s,0) \,d B^{\e}_s  \\
& \qquad  +\int_{(-1,\infty)} (p(Z_{t-},w) - Z_{t-}) \widetilde N(dt,dw) ;\\
\ 
 Y_{t} &= \alpha_{\e} t + \sigma_{\e} B^{\e}_{t}  + \int_0^t\int_{(-1,\infty)} h_{\e}( w) \widetilde N(dt,dw) +\int_0^t \int_{(-1,\infty)} (w -h_{\e}(w) ) N(dt,dw),
\end{split}
\ee
where $B^{\D}$ and $B^{\e}$ are Brownian motions; $N$ is a  Poisson point measure on $\mathbb{R}_{+}\times (-1,\infty)$ with intensity $\, dt\nu_{\e}(dw)\,$ and $\widetilde N$ is the compensated martingale measure of $N$; $Z_0, B^{\D}, B^{\e}$ and  $N$ are   independent.    
\end{thm}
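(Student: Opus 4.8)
The plan is to verify that the discrete Markov chain $X^N_k=(Z^N_k/N, S^N_k)$ satisfies Hypotheses \textbf{(H0)}, \textbf{(H1)} and \textbf{(H2)} of Section \ref{gene} with the appropriate choice of state space $\mathcal X=[0,1]\times\R$, truncation set $\mathcal U$, time scale $v_N=N$, specific function, and test-function space $\mathcal H$, and then invoke Theorem \ref{identification}. Here $d=2$; a natural choice for the specific function on the second (environment) coordinate is $w\mapsto h_\e(w)$ from Assumption \textbf{A}, while on the first coordinate one may keep the identity since $Z^N_{k+1}/N - Z^N_k/N$ stays bounded (it lives in $[-1,1]$). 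The test-function space indicated in the introduction is
$$\mathcal H=\{(u,w)\in[-1,1]\times(-1,\infty)\mapsto 1-e^{-ku-\ell w}\,;\ k,\ell\ge 0\},$$
together with the coordinates and square products of the specific function needed for \textbf{(H1.1)}; \textbf{(H1.2)} is then obtained by the locally compact Stone--Weierstrass theorem recalled in the Appendix, the exponentials $e^{-ku-\ell w}$ separating points of $[-1,1]\times(-1,\infty]$.

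The core computation is the asymptotics of $\mathcal G^N_{(z,s)}(H)=N\,\E\big(H(Z^N_1/N-z,\,S^N_1-s)\big)$ for $H(u,w)=1-e^{-ku-\ell w}$. Conditioning on the environment increment $E^N$ and using that, given $E^N=w$, $Z^N_1$ is a sum of $N$ i.i.d.\ Bernoulli$(p(z,w))$ variables, the population part factorizes through the Laplace transform: $\E\big(e^{-k(Z^N_1/N - z)}\,\big|\,E^N=w\big)=e^{kz}\big(1-p(z,w)+p(z,w)e^{-k/N}\big)^N$. Expanding $e^{-k/N}=1-k/N+k^2/(2N^2)+O(N^{-3})$ and using $p(z,0)=z$ together with the $C^3$-regularity of $p$, one gets that
$$N\Big(1-e^{kz}\big(1-p(z,w)+p(z,w)e^{-k/N}\big)^N e^{-\ell w}\Big)$$
has a limit, uniformly in $z\in[0,1]$ and locally uniformly in $w$, whose $w$-dependence is a bounded $C^3$ function vanishing at $w=0$; then Lemma \ref{CV-EN} (applied with $g$ this limiting function) takes care of averaging over $E^N$ and produces $\mathcal G_{(z,s)}(H)$. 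Carrying out the Taylor expansion carefully identifies the drift $b_1$, the diffusion coefficients $\sqrt{z(1-z)}$ (from the Bernoulli variance, i.e.\ the $k^2$ term $\tfrac12 k^2 p(z,0)(1-p(z,0))$) and $\sigma_\e\,\partial_w p(z,0)$ (from the $\beta_\e$ term combined with the mixed $k\ell$ contribution coming from $\partial_w p(z,0)$), and the jump kernel $K\big((z,s),w\big)=\big(p(z,w)-z,\ w\big)$ on $V=(-1,\infty)$ with $\mu=\nu_\e$. One checks $\int_V 1\wedge|K((z,s),w)|^2\,\nu_\e(dw)<\infty$ uniformly using $|p(z,w)-z|\le \|\partial_z p\|_\infty\wedge(\|\partial_w p\|_\infty|w|)$ and $\int (w^2\wedge1)\nu_\e(dw)<\infty$. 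Hypothesis \textbf{(H0)} is immediate since the increment in the first coordinate is bounded and, for the second coordinate, $N\,\p(|E^N|>b)\to\nu_\e(\{|w|>b\})\to0$ by Assumption \textbf{A}. Continuity and extendability to $\overline{\mathcal X}$ of $x\mapsto\mathcal G_x(H)$ (Hypotheses \textbf{(H2.1)}) follow from the smoothness of $p$ and dominated convergence in the $\nu_\e$-integral.

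The main obstacle I expect is the uniformity in $z\in[0,1]$ of all these convergences, in particular controlling the $\nu_\e$-integral term uniformly near $w=-1$ and $w=+\infty$: this is exactly what Lemma \ref{CV-EN} is designed for, but one must first check that the integrand $\widehat g(z,w)$ built from the limiting Laplace functional is genuinely $C^3$ and bounded in $(z,w)$ with the right behaviour at $w=0$, which uses $p\in C^3$ and $p(z,0)=z$ in an essential way. A secondary point requiring care is translating the abstract limiting characteristics $(b,\sigma,V,\mu,K)$ produced by Theorem \ref{identification} into the explicit SDE system \eqref{EDSS}: one must recognize that the Brownian part splits into the two independent pieces $\sqrt{Z_s(1-Z_s)}\,dB^{\D}_s$ and $\sigma_\e\,\partial_w p(Z_s,0)\,dB^{\e}_s$ (the cross term being consistent because the environment Brownian motion $B^\e$ drives both the population drift through $b_1$ and this diffusive term), and that the jump terms for the two coordinates are coupled through the single Poisson measure $N(dt,dw)$ via the kernel $K$. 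Once \textbf{(H0)}, \textbf{(H1)}, \textbf{(H2)} are verified and the initial conditions $[NZ_0]/N\to Z_0$, $S^N_0=0$ are (trivially) tight, Theorems \ref{thm-tightness} and \ref{identification} give exactly the stated tightness and identification, completing the proof.
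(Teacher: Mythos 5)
Your overall route is the paper's: condition on $E^N$, use the Bernoulli Laplace transform, Taylor-expand, average over the environment through Lemma \ref{CV-EN}, identify $b_1$, the two Brownian coefficients and the jump kernel $K((z,y),w)=(p(z,w)-z,w)$ with $V=(-1,\infty)$, $\mu=\nu_{\e}$, and conclude by Theorems \ref{thm-tightness} and \ref{identification}. However, two steps are genuine gaps as written. First, you take the specific function to be $h(u,w)=(u,h_{\e}(w))$; this is the paper's \emph{truncation} function $h_0$, not an admissible specific function for your choice of $\mathcal H$. The machinery of Section \ref{gene} requires $h^i,\ h^ih^j\in Vect(\mathcal H)$ (\textbf{(H1.1)}) and, crucially, approximants $g_n$ with $|h|^2 g_n\in Vect(\mathcal H)$ (\textbf{(H1.2)}). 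With $\mathcal H$ spanned by the exponentials $H_{k,\ell}(u,w)=1-e^{-ku-\ell w}$, neither $u$ nor $h_{\e}(w)$ lies in $Vect(\mathcal H)$, and adjoining "the coordinates and square products of the specific function" does not repair \textbf{(H1.2)}: $(u^2+h_{\e}(w)^2)g_n$ is not in that enlarged span for the Stone--Weierstrass approximants, and for the adjoined functions you would moreover have to verify \textbf{(H1.3)} (uniform convergence of the corresponding first and second conditional moments), which you never compute. The paper's choice $h(u,w)=(1-e^{-u},1-e^{-w})$ is what makes everything work: $h^1=H_{1,0}$, $h^2=H_{0,1}$, the products $h^ih^j$ are finite linear combinations of the $H_{k,\ell}$, and $Vect(\mathcal H)$ is stable under multiplication by $|h|^2$, so Stone--Weierstrass gives \textbf{(H1.2)}. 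The function $(u,h_{\e}(w))$ enters only later, as the truncation function $h_0$ in the decomposition \eqref{h2} used for the identification; with your assignment, \textbf{(H1.1)}--\textbf{(H1.2)} fail and Theorems \ref{thm-tightness} and \ref{identification} cannot be invoked.

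Second, the displayed claim that $N\big(1-e^{kz}(1-p(z,w)+p(z,w)e^{-k/N})^N e^{-\ell w}\big)$ converges, uniformly in $z$ and locally uniformly in $w$, is false: for fixed $w\neq 0$ the inner bracket tends to $A_{k,\ell}(z,w)=1-e^{-k(p(z,w)-z)-\ell w}$, which is in general nonzero, so the product with $N$ diverges. The factor $N$ must remain attached to the expectation over $E^N$, not to the conditional quantity at fixed $w$. The correct statement is a uniform (in $z\in[0,1]$ and $w\in(-1,\infty)$, because $p\in[0,1]$) expansion of the conditional quantity as $A_{k,\ell}(z,w)$ plus a correction of order $\frac{k^2}{2N}p(z,w)(1-p(z,w))$ plus $\mathcal O(N^{-2})$; only then does one get, as in \eqref{DLWF}, $\mathcal G^N_x(H_{k,\ell})=N\,\E\big(A_{k,\ell}(z,E^N)\big)\big(1+\mathcal O(1/N)\big)-\frac{k^2}{2}\E\big(p(z,E^N)(1-p(z,E^N))\big)+\mathcal O(1/N)$, and Lemma \ref{CV-EN} is applied to the two functions $A_{k,\ell}$ and $p(1-p)-z(1-z)$ (both $C^3$, bounded and vanishing at $w=0$). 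Your subsequent appeal to Lemma \ref{CV-EN} suggests you intend exactly this, but the intermediate limit you rely on does not exist, so as written the computation of $\mathcal G_x(H_{k,\ell})$ — and hence the verification of \textbf{(H1.3)} and \textbf{(H2)} — is not established.
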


\begin{proof} We apply our results to the  Markov chain
$\ X^N_k=\left(\big({Z^{N}_{k}\over N},  S^N_{k}\big), k\in \mathbb N\right)\,$. \\
Let $x=(z,y)\in \mathcal X$, we set
$F^N_{x} = F^N_{(z,y)} = \left({1\over N} \sum_{i=1}^{N}{\cal E}_{i}(z, E^N), y + E^N \right)$
and we have
\be
\label{fnx}
F^N_{x} -x
=  \left({1\over N} \sum_{i=1}^N({\cal E}_{i}(z, E^N) - z), E^N\right) .\ee
The state space of the random variables $F^N_{x} -x$  is $\,{\cal U} = [-1,1]\times (-1,+\infty)$.

\bi We first prove that  {\bf (H0)}, {\bf (H1)} and  {\bf (H2)} are satisfied with $v_N=N$.

\me (i) Let us first check {\bf (H0)}. We take $b>0$ and consider
$$ {\cal G}^N_x (\un_{{\cal B}(0,b)^c}) = N\, \E(\un_{{\cal B}(0,b)^c}(F^N_{x} - x)).$$
Then 
\ben N\,\E(\un_{{\cal B}(0,b)^c}(F^N_{x} - x)) &\le & N\,\mathbb{P}\left({1\over N} \big|\sum_{i=1}^N ({\cal E}_{i}(z, E^N) - z)\big|>b/\sqrt{2}\right) + 
N\,\mathbb{P}\left( |E^N| >b/\sqrt{2}\right). \een
We observe that ${1\over N} \big|\sum_{i=1}^N ({\cal E}_{i}(z, E^N) - z)\big| \leq 1$ 
 a.s. Moreover the last part of Assumption {\bf A} ensures that
$$\limsup_{N\rightarrow \infty} N\,\mathbb{P}( |E^N| >b/\sqrt{2}) \leq \nu[b/\sqrt{2}-1, \infty),$$
which tends to $0$ as $b\rightarrow +\infty$.
 Then $\sup_{N, x\in [0,1]\times (-1,\infty)}{\cal G}^N_x (\un_{{\cal B}(0,b)^c})$ tends to $0$ and  {\bf (H0)} is satisfied. 

 \bi (ii)  We define the function $h$ on ${\cal U} $ by  $$h(u,w)= (1-e^{-u}, 1-e^{-w}).$$
 The space $\mathcal H$  is the subset of  real functions on ${\cal U}$   defined as
$$\mathcal H=\{ (u,w) \in{\cal U} \rightarrow  H_{k,\ell}(u,w) \, ; \,  k,   \ell \geq 0\}, \quad \text{with } \ H_{k,\ell}(u,w) = 1 - e^{- k u -\ell w}.$$
We can apply the local Stone-Weierstrass Theorem  to the algebra $Vect({\cal H})\cap  C_0({\cal U}^*)$, ${\cal U}^*={\cal U}\setminus\{0,0\}$ being a  locally compact Hausdorff space   (see Appendix \ref{localSW}). This algebra in dense in $C_0({\cal U}^*)$ and then any function in $C_c({\cal U})$ vanishing at zero is the uniform limit of elements of
$Vect({\cal H})$. Moreover $Vect({\cal H})$ is stable by multiplication by $\vert h \vert^2$. We deduce that ${\bf (H1.2)}$ is satisfied, while  ${\bf (H1.1)}$ is obvious.

\me Let us now prove that ${\bf (H1.3)}$ is  satisfied. We need to study the limit of ${\cal G}^N_x(H_{k,\ell})$ as $N$ tends to infinity. Recall that ${\cal G}^N_x (H_{k,\ell})  = N\, \E(H_{k,\ell}(F^N_{x} - x))$
with $x=(z,y)$ and $F^N_{x} - x$ given by \eqref{fnx}.
We have
\ben
{\cal G}^N_x (H_{k,\ell}) &=& {N}\,\E\Big(1- e^{-{k\over N}\sum_{i=1}^N ({\cal E}_{i}(z, E^N)-z)}e^{-\ell E^N}\Big)\\
&=&  {N}\Big(1- \E\Big( \E\Big[e^{-{k\over N} ({\cal E}(z, E^N)-z)} \,|\,E^N\Big]^N e^{-\ell E^N}\Big)\Big)\\
&=&  {N}\Big(1- \E\Big( \Big[e^{-{k\over N}(1-z)}p(z, E^N) +e^{{k\over N}z}(1- p(z, E^N) )\Big]^N e^{-\ell E^N}\Big)\Big).
\een
The following Taylor expansion gives
\ben
\log\bigg(e^{-{k\over N}(1-z)}p +e^{{k\over N}z}(1- p)\bigg)
&=& {k\over N}(z-p) +{k^2\over 2 N^2 } p(1-p) +\mathcal O(1/N^3),
\een
with $N^3\mathcal O(1/N^3)$ bounded uniformly in $p,z \in [0,1]$. Then  we obtain
\ben
 {\cal G}^N_x(H_{k,\ell}) &=&N\E\Big(1- e^{  k(z-p(z,E^N))-\ell E^N}.e^{{k^2\over 2N }p(z,E^N)(1-p(z,E^N))}.e^{\mathcal O(1/N^2)}\Big)\\
 &=&N \,\E\Big(1 - \left[(1-A_{k,\ell})(1+B_{k,N})(1+ R_{k,N}) \right] (z,E^N) \Big),
 \een
where $N^2R_{k,N}(z,w)$ is  uniformly bounded for $z \in [0,1], w\in (-1,\infty)$ and $N\geq 1$ and  $$ A_{k,\ell }(z,w)=  1-\exp\big(-k(p(z,w)-z) -\ell w\big);  \quad  B_{k,N}(z,w)= {k^2\over 2 N}\, p(z,w)(1- p(z,w)) +\mathcal O\big( {1 \over N^2} \big).$$
By expansion, we deduce that \be \label{DLWF}
{\cal G}^N_x(H_{k,\ell})
&=&N\E\big(A_{k,\ell}(z,E^N)\big)\Big(1+\mathcal O\big(1 / N \big)\Big) \nonumber \\
&&\qquad \qquad -{k^2 \over 2}\E\Big( p(z,E^N)(1-p(z,E^N))\Big)+\mathcal O(1/N).
\ee

\me Using Lemma \ref{CV-EN} both for $(z,w)\rightarrow A_{k,\ell}(z,w)$ and $(z,w)\rightarrow p(z,w)(1-p(z,w))-z(1-z)$, we obtain from \eqref{DLWF} that
$${\cal G}^N_x(H_{k,\ell}) \stackrel{N\rightarrow \infty}{\longrightarrow} {\cal G}_x(H_{k,\ell})={\cal B}_z(A_{k,\ell})-{k^2\over 2 }z(1-z),$$
uniformly for $x=(z,y)\in [0,1]\times \R$.  Then {\bf (H1.3i)} is satisfied and for any $x \in {\cal X}$,
\be
{\cal G}_x(H_{k,\ell}) =\alpha_{\e} \frac{\partial A_{k,\ell}}{\partial w} (z,0)
+  {\beta_{\e} \over 2}\frac{\partial^2 A_{k,\ell}}{\partial w^2} (z,0)+\int_{(-1,\infty)}  \widehat{A_{k,\ell}}(z,w)\nu_{E}(dw)  \,  -{k^2\over 2 }z(1-z)\label{valGlim}
\ee
with 
$$\frac{\partial A_{k,\ell}}{\partial w} (z,0)= k \frac{\partial p}{\partial w} (z,0)+\ell, \qquad  \frac{\partial^2 A_{k,\ell}}{\partial w^2} (z,0)=k \frac{\partial^2 p}{\partial w^2} (z,0)-\big(k \frac{\partial p}{\partial w} (z,0)+\ell\big)^2$$
and
$$\widehat{A_{k,\ell}}(z,w)=A_{k,\ell}(z,w)-h_{\e}(w)\frac{\partial A_{k,\ell}}{\partial w} (z,0)-{h_{\e}^2(w) \over 2} \frac{\partial^2 A_{k,\ell}}{\partial w^2} (z,0).$$

\me The assumptions on the function $p$ allow us to conclude that  {\bf (H1.3ii)} is also satisfied.

\bi (iii) We now check ${(\bf H2)}$.
The continuity  of ${\cal G}_{.}$ on $[0,1]\times \R$   comes from the    regularity of $p$, from  the integrability  assumption on $\nu_{\e}$ and  from Lebesgue's Theorem (by Assumption {\bf A}).

\me Now, let us introduce the truncation function defined on ${\cal U}=[-1,1]\times (-1,\infty)$ by  $$ h_{0}(u,w)= (u, h_{\e}(w)).$$
With the notation of Section 2, recall that $h^1_{0}(u,w)=u$ and $h^2_{0}(u,w)=h_{\e}(w)$.

\me  With the notation of Lemma \ref{lem-decomp} we have
$$\alpha_{1}^{h_{0}}(H_{k,\ell})=k, \ \alpha_{2}^{h_{0}}(H_{k,\ell})=\ell, \  \beta_{11}^{h_{0}}(H_{k,\ell})=-{k^2 \over 2} , \ \beta_{12}^{h_{0}}(H_{k,\ell})=\beta_{21}^{h_{0}}(H_{k,\ell})=-{k\ell \over 2} , \ \beta_{22}^{h_{0}}(H_{k,\ell})=-{\ell ^2 \over 2}.$$
 Moreover, setting  $K(z,w) = (p(z,w)-z, w)$,  we note that $A_{k,\ell}=H_{k,\ell}\circ K$ and
\ben
\widehat{A_{k,\ell}}(z,w)&=&\overline{H_{k,\ell}}^{h_0}(K(x,w))
+kp_1(z,w)-{ k^2 \over 2}p_2(z,w)-k\ell h_{\e}(w) q(z,w),
\een
with $\,p_1(z,w)=p(z,w)-z-  h_{\e}(w)\frac{\partial p}{\partial w} (z,0)-{ h_{\e}(w)^2 \over 2}\frac{\partial^2 p}{\partial w^2} (z,0)$,\\
$p_2(z,w)=(p(z,w)-z)^2-  h_{\e}^2(w) \Big(\frac{\partial p}{\partial w} (z,0)\Big)^2\,$ and $\,q(z,w)=p(z,w)-z- h_{\e}(w)\frac{\partial p}{\partial w} (z,0).$

\me Now we set
 $V= (-1,\infty)$ and  choose $\mu =\nu_{\e}$ and  for $x=(z,y) \in [0,1]\times \R$, we define  \ben
 &&b_{1}(x) =   b_{1}(z) = \alpha_{\e}   \frac{\partial p}{\partial w} (z,0)+  {\beta_{\e} \over 2}  \frac{\partial^2 p}{\partial w^2} (z,0)+\int_V p_{1}(z,w)\nu_{\e}(dw)  \quad ;\quad  b_{2}(x) = \alpha_{\e}\\
 && \sigma_{1,1}(x) = \sqrt{z(1-z)}\quad ; \quad  \sigma_{2,2}(x)  
 = \sigma_{\e} \quad ; \quad \sigma_{2,1}(x)=0\quad ; \quad  \sigma_{1,2}(x) = \sigma_{\e}  \frac{\partial p}{\partial w} (z,0). 
  \een
Then  \eqref{valGlim} can be written as
\ben
{\cal G}_x(H_{k,\ell}) &=&kb_1(x)+\ell b_2(x)-{k^2 \over 2}c_{11}(x)-{\ell^2 \over 2}c_{22}(x)-k\ell c_{12}(x)+ \int_V \overline{H_{k,\ell}}^{h_0}(K(z,w))\nu_{\e}(dw),
\een
where, recalling that $\beta_{\e}= \sigma_{\e}^2 + \int_{(-1,\infty)} h_{\e}^2(w)\nu_{\e}(dw)$, 
\ben
c_{11}(x)&=&z(1-z)+\beta_{\e}\left( \frac{\partial p}{\partial w} (z,0)\right)^2+ \int_V   p_2(z,w) \nu_{\e}(dw) \\
&=&\sigma_{1,1}^2(x)+\sigma_{1,2}^2(x)+\int_V \big(h_0^1(K(z,w))\big)^2\mu(dw), \\
c_{22}(x)&=& \beta_{\e}  =\sigma_{2,2}(x)^2+\int_V \big(h_{0}^2(K(z,w))\big)^2\mu(dw),\\
c_{12}(x)&=& \beta_{\e}  \frac{\partial p}{\partial w} (z,0)+\int_{(-1,\infty)} h_{\e}(w) q(z,w) \nu_{\e}(dw)
= \sigma_{12}(x)\sigma_{2,2}(x)+\int_V h_0^1h_{0}^2(K(z,w))\mu(dw).
\een
Thus  ${\bf (H2)}$ holds for any $H=H_{k,\ell}\in {\cal H}$.

\me    
We can now apply Theorems \ref{thm-tightness}  and \ref{identification} for  tightness and identification and conclude.
\end{proof}
 \subsection{Pathwise  uniqueness and convergence in law}
To get the uniqueness for \eqref{EDSS}, we will use  the  pathwise uniqueness result from  Li-Pu \cite{li}. 
    \begin{cor}
  \label{WF-conv}  Let us assume that Assumption ${\bf A}$ holds and that  the function  $z\to p(z,w)$ is non-decreasing for  any $w\in (-1,+\infty)$.
  Then   the sequence of processes $\,\left(Z^{N}_{[N.]}/N,  S^N_{[N.]}\right)_N\,$ converges in law   in $
  \mathbb D([0,\infty),[0,1]\times \R)$
to the unique strong solution $(Z,Y)$ of \eqref{EDSS}.
 \end{cor}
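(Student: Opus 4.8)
The plan is to obtain Corollary \ref{WF-conv} by combining Theorem \ref{main} with a pathwise uniqueness argument for the system \eqref{EDSS}. Theorem \ref{WF-tension} already furnishes {\bf (H0)}, {\bf (H1)} and {\bf (H2)}, so the only hypothesis left to verify is {\bf (H3)}: uniqueness in law of the solution of \eqref{eds} (here specialized to \eqref{EDSS}) starting from the given initial law. Since the convergence of $(X_0^N)_N$ in law to $X_0=(Z_0,0)$ is assumed, Theorem \ref{main} then yields the convergence in $\mathbb D([0,\infty),[0,1]\times\R)$ to \emph{the} solution of \eqref{EDSS}, and uniqueness in law of a weak solution together with pathwise uniqueness (via Yamada--Watanabe) upgrades this to the existence of a unique strong solution $(Z,Y)$, which is what the statement asserts.

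First I would note that the second coordinate $Y$ is autonomous: it is the fixed L\'evy process \eqref{environnement} driven by $(B^{\e},N)$, so it is uniquely determined and we only need uniqueness for the $Z$-equation given the environment. The $Z$-equation is a one-dimensional SDE on $[0,1]$ with a diffusion coefficient $\sqrt{Z_s(1-Z_s)}$ of H\"older exponent $1/2$, an additional (Lipschitz-in-nothing-bad) Brownian term $\sigma_{\e}\,\partial_w p(Z_s,0)\,dB^{\e}_s$, a drift $b_1$ which is bounded and continuous (indeed Lipschitz, given $p\in C^3$ and the integrability of $\nu_{\e}$), and a compensated jump term with kernel $p(Z_{t-},w)-Z_{t-}$. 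The key structural facts making Yamada--Watanabe-type arguments work here are: the $\sqrt{z(1-z)}$ coefficient is $1/2$-H\"older, the remaining Brownian coefficient $z\mapsto \sigma_{\e}\partial_w p(z,0)$ is Lipschitz (a difference of a Lipschitz and a $1/2$-H\"older function still allows the Yamada--Watanabe comparison after splitting), and the jump kernel $z\mapsto p(z,w)-z$ is, for each fixed $w$, a monotone non-decreasing function of $z$ by the hypothesis that $z\mapsto p(z,w)$ is non-decreasing; monotonicity of the jump kernel is exactly the condition under which pathwise uniqueness for non-negative (or $[0,1]$-valued) jump SDEs holds, as in Fu--Li \cite{FL}, Li--Pu \cite{li} and Dawson--Li. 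I would therefore invoke \cite{li} (Li--Pu) directly: their pathwise uniqueness theorem applies to SDEs of exactly this branching/Wright--Fisher type with $1/2$-H\"older Brownian coefficient, Lipschitz drift, and monotone non-decreasing (in the state variable) jump coefficients, and the monotonicity hypothesis on $p$ is imposed precisely so that $p(\cdot,w)-\mathrm{Id}$ is non-decreasing.

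The main steps, in order: (1) record that Theorem \ref{WF-tension} gives tightness and identification, so every limit point solves \eqref{EDSS}; (2) observe $Y$ is determined pathwise by its driving noise, reducing {\bf (H3)} to uniqueness of $Z$ given the environment; (3) check that the coefficients of the $Z$-equation satisfy the hypotheses of the Li--Pu pathwise uniqueness theorem --- bounded Lipschitz drift $b_1$, $1/2$-H\"older-plus-Lipschitz diffusion coefficients, and (using the monotonicity of $z\mapsto p(z,w)$) a non-decreasing jump kernel $z\mapsto p(z,w)-z$ with the required $L^1(\nu_{\e})$ and $L^2(\nu_{\e})$ control coming from the regularity of $p$ and $\int (w^2\wedge 1)\nu_{\e}(dw)<\infty$; (4) conclude pathwise uniqueness, hence uniqueness in law, hence {\bf (H3)} holds; (5) apply Theorem \ref{main} to deduce the claimed convergence in law to the unique strong solution $(Z,Y)$.

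The main obstacle is step (3): verifying that \eqref{EDSS} genuinely falls under the scope of \cite{li}, in particular handling the interaction between the two Brownian terms $\sqrt{Z_s(1-Z_s)}\,dB^{\D}_s$ and $\sigma_{\e}\partial_w p(Z_s,0)\,dB^{\e}_s$ (driven by \emph{independent} Brownian motions, so in the Yamada--Watanabe estimate the two diffusion contributions to the quadratic variation of the difference of two solutions add up, and one needs the $1/2$-H\"older modulus for the first and Lipschitz control for the second), and checking that the compensated jump term with kernel $p(Z_{t-},w)-Z_{t-}$ contributes a term controllable by the non-decreasing rearrangement trick of Fu--Li. One must also make sure solutions genuinely stay in $[0,1]$ (boundary behaviour at $0$ and $1$), which follows because $p$ maps into $[0,1]$ so the jumps never leave $[0,1]$ and the diffusion coefficient vanishes at the endpoints; this is standard for Wright--Fisher-type equations. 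Once these points are in place the rest is bookkeeping.
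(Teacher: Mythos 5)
Your route is the same as the paper's: reduce everything to Hypothesis {\bf (H3)}, prove pathwise uniqueness for the $Z$-equation of \eqref{EDSS} by invoking Li--Pu \cite{li} Theorem 3.2 (the $Y$-equation being trivially unique), and then conclude by Theorem \ref{main}; the paper's proof does exactly this.

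One correction, though: the monotonicity condition you need from \cite{li} (and \cite{FL}) is that the \emph{post-jump} map $z\mapsto z+\bigl(p(z,w)-z\bigr)=p(z,w)$ be non-decreasing, not that the jump kernel $z\mapsto p(z,w)-z$ itself be non-decreasing. The latter is false in general --- for the basic example $p(z,w)=z(1+w)/(z(1+w)+1-z)$ with $w>0$, the kernel vanishes at $z=0$ and $z=1$ and is positive in between --- so as literally stated your step (3) invokes a condition that does not hold; the hypothesis of the Corollary is exactly what makes the correct condition hold, so the fix is purely a relabelling. On the remaining points you flag as obstacles, the paper's proof makes them concrete: it splits the jumps into $w\in[-1/2,1]$ (compensated, with $\int\bigl([p(z_1,w)-z_1]-[p(z_2,w)-z_2]\bigr)^2\nu_{\e}(dw)\le L|z_1-z_2|$ coming from the uniform Lipschitz bound on $z\mapsto (p(z,w)-z)/w$) and $w\notin[-1/2,1]$ (finite intensity, non-compensated), with a correspondingly modified Lipschitz drift $\widetilde{b}_1$; and it merges the two independent Brownian terms into a single coefficient $\sigma(z)=\sqrt{z(1-z)+\sigma_{\e}^2\bigl(\frac{\partial p}{\partial w}(z,0)\bigr)^2}$ satisfying $|\sigma(z_1)-\sigma(z_2)|^2\le L|z_1-z_2|$, which is the clean way to carry out the quadratic-variation estimate you sketch. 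With these adjustments your argument coincides with the published one.
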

 The monotonicity assumption on $p$ is natural regarding the model since the more individuals carry an  allele in a generation, the more this allele should be carried in the next generation.
\begin{proof} In order to apply Theorem \ref{main}, let us first show that  {\bf (H3)} holds.
The pathwise uniqueness of the process $Y$ is well known. Let us focus on the first equation of  \eqref{EDSS} and prove the pathwise uniqueness of the process $Z$. \\
First, we rewrite the SDE for $Z$ as
\ben Z_t &=&Z_0+\int_0^t \widetilde{b_{1}}(Z_s)ds + \int_0^t \sqrt{Z_s(1-Z_s)} dB_s^{\D} + \sigma_{\e}\int_0^t {\partial p \over \partial w} (Z_s,0) \,d B^{\e}_s  \\
&& \qquad  \int_{(-1,\infty)\setminus [-1/2,1]} (p(Z_{t-},w) - Z_{t-})  N(dt,dw)  +\int_{[-1/2,1]} (p(Z_{t-},w) - Z_{t-}) \widetilde N(dt,dw) 
\een
with
\ben
\widetilde{b}_1(z)&=&\left(\alpha_{\e}- \int_{(-1,\infty) \setminus [-1/2,1]}  h_{\e}(w)\nu_{\e}(dw)\right) {\partial p \over \partial w} (z,0)+{\sigma_{\e}\over 2}{\partial^2 p \over \partial w^2 } (z,0)\\
&&\qquad \qquad +\int_{[-1/2,1]} \left(p(z,w)-z-  h_{\e}(w)\frac{\partial p}{\partial w} (z,0)\right)\nu_{\e}(dw).
\een

We are  in the conditions of application of Theorem 3.2 in  \cite{li}.  Indeed, we observe first that  $\widetilde{b_1}$ is Lipschitz since $p\in C^3([0,1],(-1,\infty))$ and
$$\sup_{w\in [-1/2,1], z\in [0,1]} \bigg\vert\frac {\partial }{\partial z}  \ \bigg\{p(z,w)-z-  h_{\e}(w)\frac{\partial p}{\partial w} (z,0) \bigg\}\bigg\vert/w^2<\infty.$$
We   remark  also that  the Brownian part of \eqref{EDSS} writes  $$\sqrt{Z_t(1-Z_t)} dB_t^{\D} + \sigma_{\e}{\partial p \over \partial w} (Z_t,0) \,d B^{\e}_t= \sqrt{Z_{t}(1-Z_{t}) + \sigma_{\e}^2 \left( \frac{\partial p}{\partial w} (Z_{t},0)\right)^2} dW_{t} = \sigma(Z_{t}) dW_{t},$$ with $W$ Brownian motion since $B^{\D}$ and $B^E$ are two independent Brownian motions. We easily prove that for any $z_{1}, z_{2} \in [0,1]$, $|\sigma(z_{1})-\sigma(z_{2})|^2\leq L |z_{1}-z_{2}|$ for some constant $L>0$. \\
Finally
$\nu_{\e}((-1,\infty)\setminus[-1/2,1])<\infty$ and  $z\in [0,1]\rightarrow (p(z,w)-z)/w$ is uniformly Lipschitz for $w\in [-1,2,1]$ since its first derivative is bounded, so there exists $L>0$ such that 
$$  \int_{(-1,\infty)\setminus[-1/2,1]} ([p(z_1,w) - z_1]-[p(z_2,w) - z_2])^2  \nu_{\e}(dw)  \leq L\vert z_1-z_2\vert$$
for any $z_1,z_2 \in [0,1]$.
Then all the required assumptions for \cite{li} Theorem 3.2 are satisfied and we get the pathwise uniqueness of the solution of  \eqref{EDSS}. 
\end{proof}
\subsection{Example}
\label{ex.p}
\me  We consider the following  main example 
\be
\label{exemple-p}p(z,w) = \frac{z(1+w)}{z(1+w) + 1-z},\ee
where the environment $w$  acts as the selection factor. By construction, this selection coefficient $w$  is larger than $-1$. The particular
case when the environment is non-random, i.e. $E_k^N=s/N$ a.s. for some real number $s\in (-1,+\infty)$, yields the classical Wright-Fisher process with weak selection. It is well known that in this case, the 
processes $(Z^N_{[N.]})_{N}$ converge in law to   the Wright-Fisher diffusion with selection coefficient $s$ whose equation is given by $dZ_{t} = \sqrt{Z_t(1-Z_t)} dB_t + s Z_{t}(1-Z_{t})dt$. Here we 
generalize this result for random independent identically distributed environments.

\bi First, we observe that
 $$ {\partial p \over \partial w} (z,0)= z(1-z)\ ;\    {\partial^2 p \over \partial w^2} (z,0) = -  2 z^2(1-z).$$ 
and
\be
\label{equab}
b_1(z)= \alpha_{\e} z(1-z) -  \sigma_{\e} z^2(1-z)+\int_{(-1,\infty)} \Big(\frac{wz(1-z)}{zw+1}-  h_{\e}(w)z(1-z)\Big)\nu_{\e}(dw).
\ee
Under Assumption {\bf A}, we can apply Corollary \ref{WF-conv} to obtain the proposition stated below.

\begin{prop}
 The sequence of processes $\,\left(Z^{N}_{[N.]}/N,  S^N_{[N.]}\right)_N\,$ converges   in $\mathbb D([0,\infty),[0,1]\times \R)$
and the limit of the first coordinate  is  the unique strong solution  $Z$ of \be
\label{WFex}
Z_t &=&Z_0+\int_0^t b_{1}(Z_s) ds+ \int_0^t \sqrt{Z_s(1-Z_s)} dB_s^{\D} +\sigma_{\e} \int_0^tZ_{s}(1-Z_{s})  d B^{\e}_s\nonumber \\
&&\qquad \qquad \qquad \qquad +  \int_0^t\int_{(-1,+\infty)} {w Z_{s-} (1-Z_{s-})\over 1+ w Z_{s-} } \widetilde N(ds,dw).
\ee
\end{prop}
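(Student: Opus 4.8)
The statement is essentially a specialization of Corollary~\ref{WF-conv} to the explicit choice \eqref{exemple-p}, so the plan is to verify that all hypotheses of that corollary hold and then to compute the limiting SDE explicitly. First I would check that $p(z,w) = z(1+w)/(z(1+w)+1-z)$ is a $C^3$ function from $[0,1]\times(-1,\infty)$ to $[0,1]$ with $p(z,0)=z$: this is a rational function whose denominator $z(1+w)+1-z = 1+zw$ stays bounded away from $0$ for $(z,w)\in[0,1]\times(-1,\infty)$ (indeed $1+zw > 1-z \geq 0$, with strict positivity on the relevant range, and near $w=-1$ one has $1+zw \geq 1-z$ which is $0$ only at $z=1$, a point that must be handled by noting the formula still extends: at $z=1$, $p(1,w)=1$ for all $w>-1$). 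The required monotonicity, that $z\mapsto p(z,w)$ is non-decreasing for each fixed $w\in(-1,\infty)$, follows by differentiating: $\partial p/\partial z = (1+w)/(1+zw)^2 > 0$ since $1+w>0$. Hence Assumption~\textbf{A} plus these facts put us exactly in the setting of Corollary~\ref{WF-conv}.

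\textbf{Computing the coefficients.} Next I would compute $\partial p/\partial w$ and $\partial^2 p/\partial w^2$ at $w=0$. Writing $p(z,w) = z(1+w)/(1+zw)$ and differentiating, $\partial p/\partial w = z(1-z)/(1+zw)^2$, so $\partial p/\partial w(z,0) = z(1-z)$; differentiating once more gives $\partial^2 p/\partial w^2 = -2z^2(1-z)/(1+zw)^3$, so $\partial^2 p/\partial w^2(z,0) = -2z^2(1-z)$, as recorded before the statement. Substituting into the definition of $b_1$ from Theorem~\ref{WF-tension} (with $\beta_{\e}$ replaced appropriately; note the paper's $b_1$ in Theorem~\ref{WF-tension} uses $\sigma_{\e}$ in the second-order term) yields \eqref{equab}. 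The jump term coefficient $p(z,w)-z$ simplifies: $p(z,w)-z = z(1+w)/(1+zw) - z = z(1-z)w/(1+zw) = wz(1-z)/(1+zw)$, which is exactly the integrand appearing in the compensated Poisson integral of \eqref{WFex}. The diffusion coefficient from the selection fluctuations is $\sigma_{\e}\,\partial p/\partial w(Z_s,0) = \sigma_{\e} Z_s(1-Z_s)$, matching the $dB^{\e}_s$ term. Assembling these, the first coordinate of the limit solves precisely \eqref{WFex}.

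\textbf{Uniqueness and conclusion.} Finally, by Corollary~\ref{WF-conv}, the monotonicity of $z\mapsto p(z,w)$ (verified above) guarantees pathwise uniqueness, hence uniqueness in law, so Hypothesis~\textbf{(H3)} holds and the sequence $(Z^N_{[N\cdot]}/N, S^N_{[N\cdot]})_N$ converges in law in $\mathbb D([0,\infty),[0,1]\times\R)$ to the unique strong solution of \eqref{EDSS}. Reading off the first coordinate and rewriting the drift and jump coefficients in the explicit form above gives the claimed equation \eqref{WFex} for $Z$. The only point requiring a little care is the behavior of $p$ and its derivatives near the corner $(z,w)=(1,-1)$ of the domain, where the denominator $1+zw$ degenerates; one checks that all the integrability conditions against $\nu_{\e}$ used in Theorem~\ref{WF-tension} (in particular $\int (p(z,w)-z-h_{\e}(w)\partial p/\partial w(z,0))\,\nu_{\e}(dw)$ being finite and the Lipschitz-type bound $\sup_{w\in[-1/2,1],z\in[0,1]}|\partial_z(p(z,w)-z-h_{\e}(w)\partial p/\partial w(z,0))|/w^2 < \infty$ from the proof of Corollary~\ref{WF-conv}) remain valid for this $p$ because the problematic region $w$ near $-1$ is outside $[-1/2,1]$ and there $p-z$ is bounded, so only the finiteness of $\nu_{\e}((-1,\infty)\setminus[-1/2,1])$ is needed, which holds by Assumption~\textbf{A}. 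This is the main (mild) obstacle; everything else is a direct substitution into already-proven theorems.
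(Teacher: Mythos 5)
Your proposal is correct and follows essentially the same route as the paper: verify that the explicit $p(z,w)=z(1+w)/(1+zw)$ satisfies the hypotheses of Corollary \ref{WF-conv} (smoothness, $p(z,0)=z$, monotonicity in $z$), compute $\partial p/\partial w(z,0)=z(1-z)$, $\partial^2 p/\partial w^2(z,0)=-2z^2(1-z)$ and $p(z,w)-z=wz(1-z)/(1+zw)$, and read off \eqref{WFex} from \eqref{EDSS}. The extra care you take near $w\to-1$, $z\to 1$ is harmless but not needed beyond what the already-proven Theorem \ref{WF-tension} and Corollary \ref{WF-conv} require.
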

 In particular if $\sigma_{\e}=0$ and $\nu_{\e}=0$, we recover the classical Wright-Fisher diffusion with deterministic selection $\alpha_{\e}$. This extension allows us to consider small random fluctuations (asymptotically Brownian)   and punctual dramatic advantage of the selective effects.

\section{Continuous State Branching Process with interaction in  L\'evy environment}
\label{CSBPLEI}

In this section, we are interested in  large population approximations of  population dynamics with random environment and interaction. We    generalize in different directions the classical  convergence of Galton-Watson processes to \emph{Continous State Branching processes} (CSBP), see for example \cite{Grimvall, Lamp, CLU}.  We focus on 
models where the environment and the interaction  mainly affect  the mean of the reproduction law  and thus   modify the drift term of the CSBP by addition of stochastic and nonlinear terms.  Our method based on Section \ref{gene}  allows us to obtain new statements both for  convergence of discrete population models and for existence of solutions of SDE with jumps,  as detailed in the following theorems. In particular we  obtain a discrete population model   approximating  the so-called  \emph{CSBP  with interaction in L\'evy environment} (BPILE) for large populations. \\
The CSBPs  in random environment or with interaction have recently been subject of a large attention. More precisely, we refer to
\cite{PP} for existence of the solution of the associated SDE with general assumptions,  \cite{BS, BPS,  Li} for approximations and study of some classes of CSBP in random environment 
(without interaction), \cite{BaP, PD, PW} for CSBP with interaction (without random environment) and  \cite{Rosen} for diffusion approximations in the continuous context.   


\subsection{The discrete model}

\bi Let us now describe our framework. We scale the  population size  by the integer   $N\geq 1$ and we consider
 a  sequence $(v_{N})_{N}$ which tends to infinity with $N$.  As in Section 3, we introduce for any $N$ a sequence of independent identically distributed real-valued  random variables
 $(E^N_{k})_{k\geq 0}$ with same law as $E^N$. The asymptotical behavior of   $(E^N_{k})_{k\geq 0}$ is similar as the one in Section 3 (Assumption {\bf A})  but the scaling parameter is now $v_{N}$. 
 As in the previous section,  $h_{\e}$ denotes a truncation function   defined on $(-1,+\infty)$.  

 \begin{assumptionA1} Let us consider $\alpha_{\e} \in \R$, $\sigma_{\e} \in [0,\infty)$ and 
$\nu_{\e}$ 	a  measure on $(-1,\infty)$ such that
 \be
\label{hypnuE}
\int_{(-1,\infty)} (1 \wedge w^2) \ \nu_{\e}(dw)<\infty.
\ee
 Writing $\beta_{\e}= \sigma_{\e}^2 + \int_{(-1,\infty)}  h_{\e}^2(w) \nu_{\e}(dw)$, we assume that
$$
  \lim_{N\to \infty } v_N\, \E(h_{\e}(E^N))=\alpha_{\e};  \  \lim_{N\to \infty } v_N \,\E(h_{\e}^2(E^N))=\beta_{\e}; \ 
 \lim_{N\to \infty } v_N\, \E(f(E^N))=  \int_{(-1, \infty)} f(w) \, \nu_{\e}(dw),
$$
for any $f$  vanishing in a neighborhood of zero.
\end{assumptionA1}

We  also consider the associated random walk defined by
\be S^N_0 = 0, \  &&S^N_{k+1} = S^N_{k}+ E^N_{k}  \quad (k\geq 0). \nonumber \label{def-env}
\ee  
We recall  as in Section 3 that {\bf A1} is equivalent to the convergence
of the random walk  $S^N_{[v_N.]}$  to the  L\'evy process $Y$  with characteristics $(\alpha_{\e}, \beta_{\e}, \nu_{\e})$ defined in \eqref{environnement}.
We reduce the set of jumps to   $(-1,\infty)$ to avoid degenerated cases when a catastrophe below $-1$ could kill all the population in one generation. \\

\me  Let us fix $N$. We assume that given a population size $n$ and an environment $w$, each individual reproduces independently at generation $k$   with the same reproduction law $L^N(n,w)$.
We thus introduce   random variables $Z_{0}\geq 0$ and $\,L^N_{i,k}(n,w)$ such that the family of random variables $(Z_{0}, (L^N_{i,k}(n,w),  n\in \mathbb{N}, w\in (-1,+\infty)),  E^N_j ;  i, k\in \mathbb{N}^*, j\in \mathbb N)\,$ is independent
and for each $n\in \mathbb{N}, w\in (-1,+\infty)$, the random variables
$L^N_{i,k}(n,w)$ are all distributed as $L^N(n,w)$ for $i,k\geq 1$. We also assume that the function $\,L^N_{i,k}$ defined on $\Omega\times \mathbb{N}\times (-1,+\infty)\,$ endowed by the product $\sigma$-field is measurable. \\
The population  size  $Z^N_{k}$ at generation $k$ is  recursively defined   as follows, 
\be
 Z^N_0=[NZ_0], \ &&  Z^N_{k+1} = \sum_{i=1}^{Z_{k}^N} L_{i,k}^N(Z^N_{k},E^N_{k}) \quad \forall k\geq 0. \label{def-proc}
\ee

\me We investigate the convergence in law of the process $\Big(\big(\frac{1}{N}Z^{N}_{[v_Nt]},  S^N_{[v_Nt]} \big), t\in [0,\infty) \Big)$.  We cannot apply directly our general result to
 $Z^N_{[v_N.]}$. Indeed, the (associated) characteristics of the first component are not bounded. Moreover, scaling limits of $Z^N$ can lead to explosive processes, as already happens in the Galton-Watson case.  Therefore, we first study  the convergence  of the process
\be
\label{X}
X^N_k=\left(\exp(-Z^{N}_{k}/N),  S^N_{k}\right) \qquad (k\in \mathbb N)
\ee
 in $\mathbb{D}(\mathbb{R}_{+}, [0,1]\times \R)$ where  the state space of the first coordinate has been compactified.
Following  the notation of Section \ref{gene}, we introduce for $x = (\exp(-z),y)\in (0,1]\times \R$ the quantity
\be
F^N_x= \bigg(\exp\bigg(-\frac{1}{N}\sum_{i=1}^{[Nz]} \big(L_{i}^N ([Nz],E^N)-1\big) - z \bigg), y+E^N\bigg),
\ee
and observe that for any $z\in \mathbb N/N$, conditionally on $X_k^N=(\exp(-z),y)$, the random variable $X_{k+1}^N$ is distributed as $F^N_x$.

\me We   now apply  the theoretical framework   developed in Section \ref{gene}.
Let us introduce $\chi=(0,1]\times \R$ and
 $\mathcal U= [-1,1]\times (-1, \infty)$ and for $u=(v,w)\in \mathcal U$, we define
 \be
 h(u)=h(v,w)=(v,1-\exp(-w)), \qquad h_0(u)=h_0(v,w)=(v,h_{\e}(w))
 \ee
respectively as the specific function and the truncation function.  We choose the functional  space $\mathcal H$ defined by 
$$\mathcal H=\{ H_{k,\ell} : k\geq 1, \ell \geq 0 \} \cup \{ H_{\ell} :  \ell \geq 1\}, $$
where for any $u=(v,w)\in {\cal U}$,
$$H_{k,\ell}(u)= v^k\exp(-\ell w) \quad \text{and} \quad H_{\ell}(u)=1-\exp(-\ell w).$$
The fact that $\mathcal H$ satisfies  ${\bf (H1)}$ is a consequence of  the local Stone-Weierstrass Theorem on $[-1,1]\times [-1,\infty)\setminus\{(0,0)\}$ (cf. Appendix \ref{localSW}).  For any $k,\ell\geq 0$ and  $x= (\exp(-z),y)\in (0,1] \times  \mathbb{R}$, we have
\ben
{\cal G}^N_x (H_{k,\ell})&=& v_N \E\Big(H_{k,\ell}\Big(\exp\Big(-\frac{1}{N}\sum_{i=1}^{[Nz]} (L_{i}^N([Nz],E^N)-1) - z \Big)-\exp(-z), E^N\Big)\Big)\\
&=&e^{-kz}v_N\E\left(\left(e^{-\frac{1}{N}\sum_{i=1}^{[Nz]} (L_{i}^N([Nz],E^N)-1) }-1\right)^ke^{-\ell E^N}\right).\een
Let us set
\be
\label{noteP}
P_k^N(z,w)=\E\left(e^{- \frac{k}{N}(L^N([Nz],w)-1)}\right)^{[Nz]}-1
\ee
and
\be
\label{cond-A} A^N_{j,\ell}(z)=  v_N \E\left(P^N_j(z ,E^N) e^{-\ell E^N}\right).\ee
The presence of the term $\,-1\,$ in \eqref{noteP} may look strange at first glance,  but it ensures that $P_k^N\rightarrow 0$ as $N\rightarrow \infty$. 
Using the  binomial expansion and  by independence of the reproduction random variables conditionally on $E^N$, we obtain that
\be
\label{expGN}
{\cal G}^N_x (H_{k,\ell})
=
e^{-kz} \sum_{j=0}^{k} \binom{k}{j}(-1)^{k-j} A^N_{j,\ell}(z)
\ee
for $k\geq 1$, since $\sum_{j=0}^k \binom{k}{j}(-1)^{j-k}=0$ . We also obtain that  for $\ell\geq 1$,
$${\cal G}^N_x (H_{\ell})=v_N\E(1-\exp(-\ell E^N)).$$

\bi The  convergence of   $A^N_{j,\ell}$ characterizes the effect of the reproduction law, in a case of density dependence and random environment.
The uniform convergence and boundedness of $\exp(-kz)A^N_{j,\ell}(z)$ will ensure the tightness of $X^N_{{[v_{N}.]}}$ by Theorem \ref{thm-tightness}.
The continuity of the limiting functions is involved in the identification of the characteristic triplet of the limiting semimartingales.
Finally, their  representation  as solutions of a    stochastic differential equation  and the associated  uniqueness will  yield the convergence (Theorem \ref{main}).\\

\begin{rem}
In the case of Galton-Watson processes, $E^N=0, L^N(z,w)=L^N$ and $A^N_{j,\ell}(z)$ becomes
$$A^N_{j}(z)= v_N P^N_j(z ,0)=v_N \left(\E\left(e^{- \frac{j}{N}(L^N-1)}\right)^{[Nz]}-1\right).$$ 
We observe then that $A^N_{j}(z)\sim v_N[Nz]\E(1-\exp(-j(L^N-1)/N))$  as $N\rightarrow \infty$. 
It can easily be proved that the uniform convergence of $e^{-jz}A^N_{j}(z)$ is equivalent to the convergence of $v_NN\E(g((L^N-1)/N))$ for $g$ truncation function, its square or null in a neighborhood of zero. Thus this uniform convergence is equivalent to the
classical necessary and sufficient condition for convergence in law of Galton-Watson processes \cite{Grimvall, BS}.
\end{rem}

In the next section, we generalize this criterium to reproduction random variables depending on the population size  and the environment.

\subsection{Tightness}
\label{tgid}
We first prove the tightness of the processes $(X^N_{[v_{N}.]})_{N}$ defined in \eqref{X} by assuming the uniform convergence of the characteristics.

\begin{assumptionA1'} Let the characteristics $A^N_{j,\ell}$ be  defined in \eqref{cond-A}. For any  $1\leq j\leq k$ and $\ell\geq 0$,  there exists a  bounded function $A_{j,k,\ell}$ such that
$$  e^{-kz}A^N_{j,\ell}(z)    \stackrel{N\rightarrow \infty}{\longrightarrow}  A_{j,k,\ell}(z)$$
uniformly for $z\geq 0$. 
\end{assumptionA1'} 
Then we  state a tightness criterion for the original scaled process in the state space $[0,\infty]\times \R$ endowed with a distance $d$ which makes it compact and then  Polish, say $d(z_{1},z_{2})=\vert \exp(-z_{1})-\exp(-z_{2})\vert$ for $z_1,z_2\in [0,\infty]$ with the convention $\exp(-\infty)=0$.
\begin{thm}  \label{tensionditCSBP}
Under Assumptions {\bf A1} and {\bf A1'}, the sequence of processes 
$$\bigg(\big(\frac{1}{N}Z^{N}_{[v_Nt]},  S^N_{[v_Nt]} \big), t\in [0,\infty) \bigg) $$
is tight in  $\mathbb D(\R_{+}, [0,\infty]\times \R)$.
\end{thm}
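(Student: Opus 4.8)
The plan is to derive tightness of the non-compactified process $\big((Z^N_{[v_Nt]}/N, S^N_{[v_Nt]}),\,t\ge 0\big)$ in $\mathbb D(\R_+,[0,\infty]\times\R)$ from the tightness of the compactified process $X^N_{[v_N.]}=\big(\exp(-Z^N_{[v_N.]}/N),S^N_{[v_N.]}\big)$ in $\mathbb D(\R_+,[0,1]\times\R)$, which follows from Theorem \ref{thm-tightness}. So the first step is to verify the hypotheses of Theorem \ref{thm-tightness} for the chain $X^N$ with $v_N$ as time scale, state space $\chi=(0,1]\times\R$ (closure $[0,1]\times\R$), increment space $\mathcal U=[-1,1]\times(-1,\infty)$, specific function $h(v,w)=(v,1-e^{-w})$ and the class $\mathcal H$ introduced above. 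Concretely: {\bf (H0)} must be checked for $F^N_x$ — the first coordinate increment lies in $[-1,1]$ automatically (it is a difference of two numbers in $[0,1]$), and the big-jump control reduces to $v_N\,\p(|E^N|>b/\sqrt2)\to\nu_{\e}([b/\sqrt2-1,\infty))$ which vanishes as $b\to\infty$ by {\bf A1}, exactly as in the proof of Theorem \ref{WF-tension}. {\bf (H1.1)} and {\bf (H1.2)} hold by the local Stone--Weierstrass argument on $[-1,1]\times[-1,\infty)\setminus\{(0,0)\}$ already invoked for this $\mathcal H$. For {\bf (H1.3)} we must show, uniformly in $x=(e^{-z},y)$ with $z\in\mathbb N/N$, that $\mathcal G^N_x(H_{k,\ell})$ and $\mathcal G^N_x(H_\ell)$ converge to bounded limits: for $H_\ell$ this is immediate from {\bf A1} since $\mathcal G^N_x(H_\ell)=v_N\E(1-e^{-\ell E^N})\to\int(1-e^{-\ell w})\nu_{\e}(dw)+\ell\alpha_{\e}+\ell^2\beta_{\e}/2$ (more precisely the Lévy-type limit); for $H_{k,\ell}$ we use the binomial expansion \eqref{expGN}, $\mathcal G^N_x(H_{k,\ell})=e^{-kz}\sum_{j=0}^k\binom kj(-1)^{k-j}A^N_{j,\ell}(z)$ together with Assumption {\bf A1'}, which gives exactly the uniform convergence of each $e^{-kz}A^N_{j,\ell}(z)$ to a bounded $A_{j,k,\ell}$. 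Hence {\bf (H1.3)} holds and Theorem \ref{thm-tightness} yields tightness of $X^N_{[v_N.]}$ in $\mathbb D(\R_+,[0,1]\times\R)$.

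The second step is to transfer this to the stated process. Consider the map $\Phi:[0,1]\times\R\to[0,\infty]\times\R$, $\Phi(a,y)=(-\log a,\,y)$ with $-\log 0=\infty$; equipping $[0,\infty]$ with $d(z_1,z_2)=|e^{-z_1}-e^{-z_2}|$ makes $\Phi$ a homeomorphism (indeed an isometry in the first coordinate), so it is continuous and induces a continuous map $\mathbb D(\R_+,[0,1]\times\R)\to\mathbb D(\R_+,[0,\infty]\times\R)$ for the Skorokhod $J_1$ topology. Since $\big((Z^N_{[v_N.]}/N),S^N_{[v_N.]}\big)=\Phi\circ X^N_{[v_N.]}$ pathwise, the continuous-mapping theorem carries tightness of $X^N_{[v_N.]}$ over to tightness of the stated sequence in $\mathbb D(\R_+,[0,\infty]\times\R)$, which is what we want. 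One should note that $[0,\infty]\times\R$ with $d\times|\cdot|$ is indeed Polish (the first factor compact metrizable, the second Polish), so $\mathbb D(\R_+,[0,\infty]\times\R)$ with its Skorokhod topology is a legitimate Polish space on which tightness is meaningful.

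The main obstacle is the uniformity in {\bf (H1.3i)}. Assumption {\bf A1'} delivers the uniform convergence of $e^{-kz}A^N_{j,\ell}(z)$ as a hypothesis, but one still has to confirm that the remaining pieces of $\mathcal G^N_x(H_{k,\ell})$ are genuinely controlled uniformly in $z\in\mathbb N/N$ — in particular that the binomial identity \eqref{expGN} is exact (the $j=0$ term cancels because $\sum_{j=0}^k\binom kj(-1)^{k-j}=0$, which is why the $-1$ was inserted in the definition \eqref{noteP} of $P^N_k$), so that $\mathcal G^N_x(H_{k,\ell})$ is literally a finite linear combination of the $A^N_{j,\ell}$ with coefficients $e^{-kz}\binom kj(-1)^{k-j}$ bounded uniformly in $z$; the convergence is then uniform simply because it is a finite sum of uniformly convergent terms. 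For $H_\ell$ the delicate part is that {\bf A1} only gives convergence of $v_N\E(f(E^N))$ for $f$ vanishing near $0$, so $v_N\E(1-e^{-\ell E^N})$ needs the second-order expansion $1-e^{-\ell w}=\ell h_{\e}(w)-\tfrac{\ell^2}2h_{\e}(w)^2+\widehat{g}(w)$ with $\widehat g(w)=o(w^2)$ near $0$, after which the three parts of {\bf A1} combine to give convergence to $\ell\alpha_{\e}+\tfrac{\ell^2}2\beta_{\e}+\int_{(-1,\infty)}\widehat g(w)\nu_{\e}(dw)$ — this is essentially the content of (an easier one-variable version of) Lemma \ref{CV-EN}. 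Boundedness of the limits, i.e.\ {\bf (H1.3ii)}, is then automatic: $A_{j,k,\ell}$ is bounded by {\bf A1'} and the $H_\ell$-limit is a finite constant. With {\bf (H1.3)} in hand the cited theorem does the rest.
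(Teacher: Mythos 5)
Your proposal follows essentially the same route as the paper: verify \textbf{(H0)}--\textbf{(H1.3)} for the compactified chain $X^N=(\exp(-Z^N/N),S^N)$ — using \textbf{A1} together with the second-order expansion of $1-e^{-\ell w}$ for $H_{\ell}$, and \textbf{A1'} combined with the exact binomial identity \eqref{expGN} for $H_{k,\ell}$ — then invoke Theorem \ref{thm-tightness} and transfer tightness through the isometry between $([0,\infty],d)$ and $[0,1]$, this last transfer being the step the paper leaves implicit. The only blemish is a harmless sign slip in your stated limit for $H_{\ell}$ (it should be $\ell\alpha_{\e}-\tfrac{\ell^2}{2}\beta_{\e}+\int\widehat g\,d\nu_{\e}=\gamma^{\e}_{\ell}$), which does not affect the argument since only uniform convergence to a bounded limit is needed.
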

\begin{proof}  For  $\ell\geq 1$, it  follows from Assumption {\bf A1} that
\be
v_{N}\, \E\left(1-e^{-\ell E^N}\right)  \stackrel{N\rightarrow \infty}{\longrightarrow}  \gamma_{\ell}^E=\alpha_\e\, z - \frac{1}{2}\,\sigma^2_{\e} \, z^2 +  \int_{(-1,+\infty)}  \big( 1-e^{-z w}- z h_{\e}(w) \big) \nu_{\e}(dw),
\label{conEnv}
\ee
since  $1 - e^{- \ell w} =  \ell h_{\e}(w)-\frac{1}{2} \ell^2h_{\e}^2(w)+ \kappa(w)$, 
where $\kappa(w)=o(w^2)$ is continuous bounded.   \\
Then we can define  ${\cal G}_.$ on $H_{\ell}$ for $\ell \geq 1$ as
\be
\label{GxHl}
{\cal G}_x (H_{\ell})=\gamma_{\ell}^{\e},
\ee
 for any $x\in  {\cal X}=(0,1]\times \R$. Let us now define ${\cal G}_.$ for   $H_{k,\ell}\in {\cal H}$  and $ k\geq 1, \ell \geq 0$. We set
\be
\label{GxHkl}
{\cal G}_x (H_{k,\ell})= \sum_{j=0}^{k} \binom{k}{j}(-1)^{k-j}A_{j,k,\ell}(z).
\ee
for  $x=(e^{-z},y)\in  {\cal X}$. Using Assumption {\bf A1'} and  \eqref{expGN},
we obtain that     $$\lim_{N\to \infty}\sup_{x\in {\cal X}} \left| {\cal G}_{x}^N(H) - {\cal G}_{x}(H) \right| = 0$$
 for any $H\in{\cal H}$. Moreover ${\cal G}_{.}(H)$ is bounded  by {\bf A1'}  and  Hypothesis {\bf (H1.3)} is satisfied.
The tightness of $(X^N_{[v_N.]})_N$  is then a consequence of  Theorem \ref{thm-tightness} and  yields the result.
\end{proof}

\subsection{Identification}

We now aim  at identifying the limiting values of  $(X^N_{[v_N.]})_N$ as diffusions with jumps. We are interested in models where the environment and the interaction affect the mean reproduction law. The limiting process will be called  \emph{CSBP with interaction in a L\'evy environment}  (BPILE).

\me We introduce  a  truncation function  $h_{\D}$ on the  state space $(0,+\infty)$,  parameters $\alpha_{\D} \in \R$ and   $\sigma_{\D}\geq 0$ and  a $\sigma$-finite measure $\nu_{\D}$ on $(0,+\infty)$ such that
\be
\label{hypnud}
\int_0^{\infty} ( 1\wedge z^2) \nu_{\D}(dz)<+\infty.
\ee
\me We also  consider a locally Lipschitz function $g$ defined  on $\R^+$ such that 
\be
e^{-z}z\, g(z)  \stackrel{z\rightarrow\infty}{\longrightarrow} 0.
\label{hypG}
\ee
The function $g$ models the interaction between individuals. In the applications to  population dynamics, the most relevant functions will be  polynomial.

\bi We provide now the scaling assumption on the reproduction random variable $L^N$ so that the limiting values  of  $Z^N/N$ can be identified to a BPILE. This assumption will become more explicit and natural through the identification and examples of the next sections.
\begin{assumptionA2} Setting  for $z\geq 0$,
\ben \label{defgam}
\gamma_{z}^{\D}&=&\alpha_{\D}\,z - \frac{1}{2}\,\sigma_{\D}^2 \,z^2 +  \int_{(0,+\infty)} \big(1-e^{-zr} - z h_{\D}(r)\big) \nu_{\D}(dr), \\
\label{express}
  \gamma_{z}^{\e}&=&\alpha_\e\, z - \frac{1}{2}\,\sigma_{\e}^2 \, z^2 +  \int_{(-1,+\infty)}  \big( 1-e^{-z w}- z h_{\e}(w) \big) \nu_{\e}(dw),
  \een
we assume that for any $1\leq j\leq k$ and $\ell\geq 0$,
\be
\label{A2exemple}
\sup_{z\geq 0} e^{-kz}\big\vert  A^N_{j,\ell}(z) \  +  \ j z\, g(z)+ \gamma_{j}^{\D}\,z   +  \gamma_{j z+\ell}^{\e}-\gamma_{\ell}^{\e}\big\vert \stackrel{N\rightarrow \infty}{\longrightarrow}  0, 
 \ee
 \noindent where $A^N_{j,\ell}$ has been defined in \eqref{cond-A}.
\end{assumptionA2}

\begin{rem}
\label{remarques}
(i) In Appendix \ref{exemplee}, we provide an explicit construction of  a family of  random variables
$L^N(z,e)$ satisfying {\bf A2},  in the case $\beta_{\D}=0$. \\
(ii)  We believe that the pointwise convergence induced by ${\bf A2}$ is actually necessary for the convergence of the process $Z^N/N$ to a BPILE. It does not seem sufficient in general since some integration argument is involved. Uniformity in ${\bf A2}$  provides a sufficient condition. It can be proved  for many classes of reproduction laws 
via uniform continuity, using  monotone or convex arguments  or boundedness of derivative on compact sets, see  examples.\\
(iii) Finally, let us remark that we  only need to prove the previous convergence for $z\in \mathbb N /N$ in {\bf A2},  using the definition of $A^N_{j,\ell}(z)$ and the uniform continuity of the limit. It will be  more convenient for examples.
\end{rem}

\me \noindent We observe that under  Assumption {\bf A2}, Assumption {\bf A1'} is satisfied with 
$$A_{j,k,\ell}(z)=e^{-kz}\left(-  j z\, g(z)- \gamma_{j}^{\D}\,z   +\gamma_{\ell}^{\e}-  \gamma_{j z+\ell}^{\e}\right).$$
Indeed, this expression is bounded      using \eqref{hypG} and the boundedness of  $\exp(-kz)\gamma_{j z+\ell}^{\e}$, since  $\vert \gamma_{j z+\ell}^{\e} \vert \leq C_{\ell,j}(z+z^2+e^{jz/2}z^2\beta_{\e}+e^{jz}\nu_{\e}(-1,-1/2))$ for $j\leq k$ and $k\geq 1$.  \\
Therefore the tightness holds by Theorem \ref{tensionditCSBP}. 

\noindent Moreover we can   simplify the expression \eqref{GxHkl} of the limiting characteristic ${\cal G}_{x}$, which writes
\be
\label{GxHkl2}
{\cal G}_x (H_{k,\ell})=e^{-kz} \sum_{j=0}^{k} \binom{k}{j}(-1)^{k-j}\left(-  j z\, g(z)- \gamma_{j}^{\D}\,z   +\gamma_{\ell}^{\e}-  \gamma_{j z+\ell}^{\e}\right)
\ee
for $x=(e^{-z}, y)$. For that purpose, we denote
$$f_{z}(u) = 1-e^{- z u}$$ 
and observe that
\be
 \sum_{j=0}^{k} \binom{k}{j}(-1)^{k-j} j &=& \delta_{1,k} \label{I1} \\
 \sum_{j=0}^{k} \binom{k}{j}(-1)^{k-j} \,j^2 &=& 2 \delta_{2,k} + \delta_{1,k} \label{I2}\\
 \sum_{j=0}^{k} \binom{k}{j}(-1)^{k-j} f_j(u) &=&  (-1)^{k+1}f_1(u)^k \label{I3} \\
 \sum_{j=0}^{k} \binom{k}{j}(-1)^{k-j} f_{jz+\ell}(u) &=& (-1)^{k+1} e^{-\ell u} f_z(u)^k. \label{I4}
 \ee
For $k\geq 3$, it follows from \eqref{I3} and \eqref{I4} and straightforward computation that   
\be
\label{Gk3}{\cal G}_x (H_{k,\ell})=(-1)^ke^{-kz} \left( \int_{(-1,+\infty)} e^{-\ell w} (f_z(w))^k \nu_{E}(dw)  +  z\,\int_{(0,+\infty)}  (f_1(r))^k \nu_{\D}(dr)\right).\ee
For $k = 2$, computation using \eqref{I2} leads to
\be
{\cal G}_x( H_{2,\ell})&=&e^{-2z} \Bigg\{  z^2 \beta_{E} + \int_{(-1,+\infty)} \big(e^{-\ell w} (f_z(w))^2 - z^2 h^2_{E}(w) \big) \nu_{E}(dw) + z \beta_{\D} \nonumber \\
&&\qquad \qquad \qquad \qquad + \, z \int_{(0,+\infty)} 
\big(f_1^2(r) - h^2_{\D} (r)\big) \nu_{\D}(dr)\Bigg\}. \label{H2l}
\ee
Similarly \eqref{I1} implies that
\be
\label{cas1l}
{\cal G}_x (H_{1,\ell})=e^{-z} \Big\{\gamma_{\ell}^E - \gamma_{z+\ell}^E 
     - z g(z)  -z \gamma_{1}^{\D}\Big\}.\ee
\me

To identify the limiting SDE,  we have to find the drift and variance terms and the jump measures in \eqref{eds}, from the expressions  \eqref{GxHl}, \eqref{Gk3}, \eqref{H2l} and \eqref{cas1l}. 

\bi We first remark that for  $k\geq 3, \ell \geq 0$, $ H_{k,\ell}=\overline{ H_{k,\ell}}$  with the notation  introduced in Lemma \ref{lem-decomp}. We work by identification  for $x=(e^{-z},y)\in (0,1]\times\R$ using   \eqref{Gk3}. We thus define the measure $\mu$ on $V=[0,+\infty)\times\R$ by
\be
\label{mu}\mu(d\theta, dr) = \un_{\theta \leq 1, r>-1}\, d\theta \, \nu_{\e}(dr) + \un_{\theta>1, r>0}\,  d\theta \, \nu_{\D}(dr),\ee
and the image function $K=(K_1,K_2)$  by
\be
\label{K} K_{1}(x,\theta, r) =  - e^{-z}. \bigg(  f_{z}(r)\, \un_{\theta \leq 1} +  f_{1}(r)\, \un_{1<\theta \leq 1+z}\bigg) \ ; \ K_{2}(x, \theta, r) =  r\, \un_{\theta \leq 1}.\ee
Then   $H_{k,\ell}$ satisfies ${\bf (H2.2)}$ for    $k\geq 3, \ell \geq 0$. 

\bi Moreover it is easy to find  $b_{2}$ and $ \sigma_{2,2}$ so   that  $H_{\ell}$   satisfies ${\bf (H2.2)}$ for $\ell \geq 1$ using that
$$H_{\ell}(u)=\ell  h_{\e}(w)-\frac{\ell^2}{2} h_{\e}(w)^2+  \overline{H_{\ell}}(u), \qquad   \overline{H_{\ell}}(v,w)=f_{\ell}(w)-\ell  h_{\e}(w)-\frac{\ell^2}{2} h_{\e}(w)^2$$
for  $u=(v,w)$ and \eqref{GxHl}.
Indeed,  by identification  and from \eqref{h2}, we set   for $x=(e^{-z},y)$
\be
\label{b} b_{2}(x) = \alpha_{\e}\  ; \ \sigma_{2,2}(x) = \sigma_{\e}.\ee

\bi Let us now consider the functions $H_{2,\ell}$ $(\ell \geq 0)$. Note that for $u=(v,w)$, we have
$$H_{2,\ell}(u) = v^2e^{-\ell w}= h_{\D}^2(v)+ \overline{ H_{2,\ell}}(u), \qquad \overline{ H_{2,\ell}}(u)= v^2(e^{-\ell w} -1) + v^2-h_{\D}^2(v).$$ 
The fact that ${\bf (H2.2)}$ is satisfied for $H_{2, \ell}$ comes from \eqref{H2l} for the left hand side and for the right hand it is given by a direct computation of 
$$\sigma_{1,1}(x)^2
 + \sigma_{1,2}(x)^2 + \int_{V} K_{1}^2(x,\theta, r) \mu(d\theta,dr) + \int_{V} \overline{H_{2,\ell}}(K(x,\theta,r)) \mu(d\theta,dr),$$
 where  $K$ is  defined from \eqref{K}.  Using $\overline{H_{2,\ell}}(K(x,\theta,r)) = K_{1}(x,\theta,r)^2(e^{-\ell K_{2}(x,\theta,r)} -1)$, the condition writes    for $x=(e^{-z},y)$,  $$\sigma_{1,1}(x)^2
 + \sigma_{1,2}(x)^2 = e^{-2z} (z \sigma_{\D}^2
 + z^2  \sigma_{\e}^2).$$
 
  \me It remains to check {\bf (H2.2)} for  $H_{1,\ell}$, with
$$ H_{1,\ell} (u) = ve^{-\ell w} = v(1- \ell h_{\e}(w)) + \overline{H_{1,\ell}}(u), $$
where $\overline{H_{1,\ell}}(u)=v(\ell h_{\e}(w)-f_{\ell}(w)) =o(|u|^2)$.  
Using \eqref{cas1l},  we have \ben
{\cal G}_x (H_{1,\ell})&=&e^{-z}\bigg( - \alpha_{\e}z - z\gamma_1^{\D} - zg(z)+\,  {z^2\over 2}\sigma_{\e}^2 +  \ell z \sigma_{\e}^2 \\
&&\quad +  \int_{(-1,+\infty)}f_{z} f_{\ell}(w) \nu_{\e}(dw) + \int_{(-1,+\infty)} (zh_{\e}(w)-f_{z}(w))\nu_{\e}(dw)\bigg).\een

As a conclusion,  both sides of \eqref{h2} coincide for $H\in \cal H$  by setting for any $x=(e^{-z},y)\in (0,1]\times \R$,
\be
\label{expb1}
b_1(x)= e^{-z}\bigg(  - \alpha_{\e}z - z\gamma_1^{\D} - zg(z)+\,  {z^2\over 2}\sigma_{\e}^2 + \int_{(-1,+\infty)} (zh_{\e}(w)-f_{z}(w))\nu_{\e}(dw)\bigg) \label{defb1}
\ee
and $b_{2}(x) = \alpha_{\e}$ and $K,\mu$ defined  by \eqref{K}  and \eqref{mu} and
\be
\label{S}
\sigma_{1,1}(x) = -  \sqrt{z}  \sigma_{\D} e^{-z}\ ;\ \sigma_{1,2}(x) = -  z  \sigma_{\e} e^{-z}\ ;\  \sigma_{2,1}(x) = 0\ ;\ \sigma_{2,2}(x) =  \sigma_{\e},\ee
and for any $x\in \{0\}\times \R$ and $(\theta,r)\in V$,
\be
\label{enzero}
&& b(x)=(0,\alpha_{\e}),   \quad \sigma_{11}(x)= \sigma_{21}(x)=\sigma_{12}(x)=0,  \quad \sigma_{22}(x)=\sigma_{\e}, \\
\label{enzeroo}
&& \qquad K_{1}(x,\theta,r)=0, \quad K_{2}(x, \theta, r) =  r\, \un_{\theta \leq 1}.
\ee

\bi The general  identification result for the exponential transformation of the processes can then be stated as follows, with $h_0(v,w)=(v,h_{\e}(w))$.
\begin{thm}
\label{idcsbp}
Under Assumptions {\bf A1} and {\bf A2}, the sequence of processes 
$$ \left( \left( \exp\left(-\frac{1}{N}Z^{N}_{[v_Nt]}\right),  S^N_{[v_Nt]} \right) : t\in [0,\infty)\right) $$
is tight in  $\mathbb D([0,\infty), [0,1]\times \R)$ and any limiting value $X \in \mathbb D([0,\infty), [0,1]\times \R)$ is a weak solution of the following two-dimensional stochastic differential equation 
 \be
 \label{eds2}
 X_{t}&=&X_{0} + \int_{0}^t b(X_{s}) ds + \int_{0}^t \sigma(X_{s}) dB_{s} + \int_{0}^t\int_{V} h_{0}(K(X_{s-},v)) \tilde N(ds, dv)\nonumber\\
 && \hskip 3cm + \int_{0}^t\int_{V} (Id - h_{0})(K(X_{s-},v)) N(ds, dv),
 \ee
 where $X_{0}=(\exp(-Z_0),0)$, $N$ is Poisson point measure with intensity $ds\mu(dv)$ on $\R^+\times V=[0,+\infty)^2\times\R$  and  $B$ is a two-dimensional Brownian motion and $Z_0,B,N$ are independent. The function $b=(b_{1}, b_{2})$, the matrix $\sigma$, the measure $\mu$ and the image function $K$ have been defined in \eqref{mu}-\eqref{enzeroo}.  \end{thm}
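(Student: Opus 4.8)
The plan is to obtain Theorem~\ref{idcsbp} as an instance of the general Theorems~\ref{thm-tightness} and~\ref{identification}, applied to the Markov chain $X^N=(X^N_k)_k$ of~\eqref{X} run on the time scale $v_N$, with state space $\mathcal X=(0,1]\times\R$, ambient set $\mathcal U=[-1,1]\times(-1,\infty)$ in which the increments $F^N_x-x$ live, and the specific function $h$, truncation function $h_0$ and space $\mathcal H$ fixed just above the statement. Much of the computational content is already in place: the limiting characteristics ${\cal G}_x(H_\ell)$ and ${\cal G}_x(H_{k,\ell})$ of~\eqref{GxHl}, \eqref{GxHkl2}, \eqref{Gk3}, \eqref{H2l}, \eqref{cas1l}, and the candidate data $b,\sigma,V,\mu,K$ of~\eqref{mu}--\eqref{enzeroo}. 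So the remaining work is to verify Hypotheses {\bf (H0)}, {\bf (H1)}, {\bf (H2)} and then invoke the two theorems; tightness of $(X^N_0)_N$ in $\overline{\mathcal X}=[0,1]\times\R$ is free, since its first coordinate lies in the compact $[0,1]$ and its second is $0$, and in fact $X^N_0\to X_0=(\exp(-Z_0),0)$ in law.

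First I would dispatch {\bf (H0)}: the first coordinate of $F^N_x-x$ lies in $(-1,1)$ and its second is $E^N$, so $\{|F^N_x-x|>b\}\subset\{|E^N|>\sqrt{b^2-1}\}$ uniformly in $x$, whence ${\cal G}^N_x(\un_{B(0,b)^c})\le v_N\,\p(|E^N|>\sqrt{b^2-1})$; the last part of Assumption {\bf A1} (extended to $\un_{B(0,b)^c}$ by monotone approximation) bounds the $\limsup$ in $N$ by $\nu_\e(\{|w|>\sqrt{b^2-1}\})\to0$ as $b\to\infty$. Then I would check {\bf (H1)}: {\bf (H1.1)} is the identities $h^1=H_{1,0}$, $h^2=H_1$, $(h^1)^2=H_{2,0}$, $h^1h^2=H_{1,0}-H_{1,1}$, $(h^2)^2=2H_1-H_2$; {\bf (H1.2)} comes from the local Stone--Weierstrass Theorem (Appendix~\ref{localSW}), since $\mathrm{Vect}(\mathcal H)$ is an algebra separating the points of $\mathcal U^*$, vanishing nowhere on it, and stable under multiplication by $|h|^2$; and {\bf (H1.3)} is precisely the uniform convergence and boundedness of the ${\cal G}^N_x(H)$ established inside the proof of Theorem~\ref{tensionditCSBP}, once one recalls that {\bf A2} forces {\bf A1'} with a bounded limit.

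The substance is {\bf (H2)}. For {\bf (H2.2)} I would re-read the identification carried out just before the statement: $\int_V(1\wedge|K(x,v)|^2)\,\mu(dv)$ is finite and bounded on $\overline{\mathcal X}$ by~\eqref{hypnuE}, \eqref{hypnud} and the boundedness of $z\mapsto ze^{-2z}$ and $z\mapsto z^2e^{-2z}$, and the decomposition~\eqref{h2} was verified function by function: $H_{k,\ell}$ with $k\ge3$ from~\eqref{Gk3} (where $\overline{H_{k,\ell}}^{h_0}=H_{k,\ell}$), $H_{2,\ell}$ from~\eqref{H2l}, $H_{1,\ell}$ from~\eqref{cas1l}, and $H_\ell$ from~\eqref{GxHl}. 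For {\bf (H2.1)} I would show that $x\mapsto{\cal G}_x(H)$ is continuous on $\mathcal X$ (dominated convergence in the $\nu_\e$- and $\nu_\D$-integrals, using~\eqref{hypnuE}, \eqref{hypnud} and the local Lipschitz property of $g$), and extends continuously to $\overline{\mathcal X}$, i.e. $\lim_{z\to\infty}{\cal G}_{(e^{-z},y)}(H)$ exists and matches the boundary values~\eqref{enzero}--\eqref{enzeroo} on $\{0\}\times\R$. I expect this boundary extension to be the main obstacle: the dangerous term is $e^{-kz}\int_{(-1,\infty)}e^{-\ell w}(f_z(w))^k\,\nu_\e(dw)$ (and its $\nu_\D$ analogue), since for $w\in(-1,0)$ the integrand $(f_z(w))^k=(1-e^{-zw})^k$ grows exponentially in $z$. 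I would tame it with the exact pointwise bound $\sup_{z\ge0}e^{-kz}|f_z(w)|^k=|w|^k(1-|w|)^{k(1-|w|)/|w|}$ on $w\in(-1,0)$ (and $\le1$ on $w\ge0$), which is $\mathcal O(|w|^k)$ near $0$ and hence $\nu_\e$-integrable because $k\ge2$ and $\int(1\wedge w^2)\,\nu_\e<\infty$; dominated convergence then gives the limit $0$ when $k\ge2$, while for $k=1$ the interaction and drift pieces vanish thanks to $e^{-z}z\,g(z)\to0$ from~\eqref{hypG}. Once {\bf (H0)}, {\bf (H1)}, {\bf (H2)} are established, Theorem~\ref{thm-tightness} yields tightness in $\mathbb D([0,\infty),\overline{\mathcal X})$ and Theorem~\ref{identification} identifies every limit point as a weak solution of~\eqref{eds}, that is of~\eqref{eds2}, with $X_0=(\exp(-Z_0),0)$ and $Z_0,B,N$ independent by construction.
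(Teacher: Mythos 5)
Your overall route is the paper's own: verify {\bf (H0)}, {\bf (H1)}, {\bf (H2)} for the compactified chain $X^N$ of \eqref{X} and invoke Theorems \ref{thm-tightness} and \ref{identification}. Your checks of {\bf (H0)} (which the paper leaves implicit), of {\bf (H1.1)}--{\bf (H1.2)} via the explicit identities and local Stone--Weierstrass, of {\bf (H1.3)} via {\bf A2}$\Rightarrow${\bf A1'}, and the function-by-function verification of {\bf (H2.2)} all coincide with what is done in the text preceding the theorem. For the boundary extension in {\bf (H2.1)} you work from the reduced expressions \eqref{Gk3}--\eqref{cas1l} rather than from \eqref{GxHkl2}, and your exact bound $\sup_{z\ge0}e^{-kz}|f_z(w)|^k=|w|^k(1-|w|)^{k(1-|w|)/|w|}$ is correct and is a pleasant elementary substitute, for $k\ge2$, for the paper's way of sending $z\to\infty$.

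There is, however, a genuine gap at $k=1$, which is exactly the point where the paper has to work. In \eqref{cas1l} the quantity to control as $z\to\infty$ is not only $e^{-z}zg(z)$ and $e^{-z}z\gamma_1^{\D}$ but also $e^{-z}\bigl(\gamma_{\ell}^{\e}-\gamma_{z+\ell}^{\e}\bigr)$, whose $\nu_{\e}$-part contains $e^{-z}\int_{(-1,\infty)}\bigl(1-e^{-(z+\ell)w}-(z+\ell)h_{\e}(w)\bigr)\nu_{\e}(dw)$, with exponential growth in $z$ of the integrand on $w\in(-1,0)$. Your domination scheme does not cover this term: for $k=1$ your sup-in-$z$ bound is of order $|w|$ near $0$, which is not $\nu_{\e}$-integrable under \eqref{hypnuE}, so one must use the compensation by $-(z+\ell)h_{\e}(w)$, which makes the integrand $O(w^2)$ near $0$ uniformly in $z$ after multiplication by $e^{-z}$, and treat the region near $w=-1$ separately using $\nu_{\e}(-1,-1+\varepsilon)\to0$ as $\varepsilon\to 0$. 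This is precisely the content of the paper's Lemma \ref{chiant}, which you never invoke; attributing the $k=1$ case to \eqref{hypG} alone leaves the hardest piece unproved (the same compensated bound is also what your ``dominated convergence'' gloss silently uses for continuity of $z\mapsto\gamma_{jz+\ell}^{\e}$ on compacts). Once this step is supplied, the rest of your argument goes through and the conclusion follows as in the paper.
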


\begin{proof}[Proof of Theorem \ref{idcsbp}]
We already know that {\bf (H1)} is a consequence {\bf A2}. Let us check that   {\bf (H2)} is satisfied.
We first  prove the continuity {\bf (H2.1)}  of $x\to {\cal G}_{x}(H)$ for any $H\in \mathcal H$ and its extension to $\overline {\cal X}$. Recalling \eqref{GxHkl2}, we need to prove that $z\in [0,\infty) \rightarrow \gamma_{\ell}^{\e}-\gamma_{j z+\ell}^{\e}$
is continuous and  $\exp(-jz)(\gamma_{\ell}^{\e}-\gamma_{j z+\ell}^{\e})\rightarrow 0$ as $z\rightarrow \infty$. Indeed, the continuity can be obtained from the bound $\vert 1-e^{-(jz+\ell) w}- (jz+\ell)  h_E(w) \vert \leq C (1\wedge w^2)$ for any $z\in [z_0,z_1]\subset [0,\infty)$, while 
the limit as $z\rightarrow \infty$ can be proved using Lemma \ref{chiant} in Appendix and $\nu_{\e}(-1,-1+\varepsilon)\rightarrow 0$ as $\varepsilon\rightarrow 0$.
That allows us to prove that {\bf (H2.1)} is satisfied. \\
 Our choice of parameters   in \eqref{mu}- \eqref{enzeroo} ensures  that ${\bf (H2.2)}$ is satisfied for any $H_{\ell}$ and $H_{k,\ell}$. 
Applying  Theorem \ref{identification} to $X^N$ allows us to conclude.
\end{proof}

Let us now write  explicitly the stochastic differential equation \eqref{eds2} for  $X_{t}= (X^1_{t}, Y_{t})$ : 
 \ben
 \label{eds3}
 dX^1_{t}&=&X^1_{t} \log X^1_{t}\bigg( \alpha_{\e} + \frac{\sigma_{\e}^2}{2}\log X^1_{t}  + \, g(-\log X^1_{t}) + \alpha_{\D}- \frac{\sigma_{\D}^2}{2}\bigg)dt\\
&& - X^1_{t} \bigg(\int_{(-1,+\infty)} (1 - e^{w \log(X^1_{t})}+ \log X^1_{t}\,h_{\e}(w))\nu_{\e}(dw) \\
&& \qquad \qquad \qquad  \qquad - \log X^1_{t} \int_{(0,+\infty)} (1 - e^{-r}-h_{\D}(r))\nu_{\D}(dr)\bigg)dt\\
 &&+  \sigma_{\e} X^1_{t}  \log X^1_{t} dB^E_{t}  - \sigma_{\D} X^1_{t}  \sqrt{-\log X^1_{t}} dB^{\D}_{t}  -  \int_{(-1,+\infty)} X^1_{t-}(1-e^{w\log (X^1_{t})}) \widetilde N^{\e}(dt,  dw)\\
 &&\qquad  \qquad \qquad  - \int_{(0,+\infty)^2} \un_{\theta\leq -\log X^1_{t-}} X^1_{t-} (1-e^{-r})\widetilde N^{\D}(dt, d\theta, dr)\\
 dY_{t}&=& \alpha_{\e} dt + \sigma_{\e} dB_{t}^{\e} + 
\int_{(-1,+\infty)} h_{\e}(w) \widetilde N^{\e}(dt, dw)+ 
\int_{(-1,+\infty)} (w-h_{\e}(w) )N^{\e}(dt, dw),
 \een
where $B^{\e}$ and $B^{\D}$ are Brownian motions, $N^{\D}$ and $N^{\e}$ are Poisson Point measures respectively on $[0,\infty)\times (0,\infty)$ and on $[0,\infty)\times (-1,\infty)$ with intensity $dt\nu_{\D}(du)$ and $dt\nu_{\e}(dw)$ and $Z_0, B^{\e},B^{\D},N^{\D}$ and $N^{\e}$ are independent.

\bi
Using It\^o's formula (see  \cite{IK}), a straightforward  computation leads to  the equation satisfied by $Z_{t}= - \log X^1_{t}$.  More precisely, we define   the explosion time  $T_{exp}$ by
$$T_{exp}=\lim_{\varepsilon \rightarrow 0+} \inf\{t\geq 0 ; X_t^1 \leq \varepsilon\}=\lim_{a\rightarrow +\infty} \inf\{t\geq 0 ; Z_t \geq a\}\in [0,+\infty].$$ 
We obtain 
\be
\label{example}
Z_t &=& Z_0+\alpha_{\D}\int_0^{t} Z_sds+\int_0^{t}  Z_{s-}dY_s +  \int_0^tZ_s g(Z_s) ds +  \sigma_{\D} \int_0^{t}  \sqrt{Z_s} dB^{\D}_s + \\
&&  \int_0^{t} \int_{(0,+\infty)^2} {\un}_{\theta\leq Z_{s-}}h_{\D}(r) \widetilde N^{\D}(ds, d\theta, dr)+ \int_0^t\int_{(0,+\infty)^2}  {\un}_{\theta\leq Z_{s-}}(r - h_{\D}(r))  N^{\D}(dt, d\theta,dr).  \nonumber
\ee
on the time interval $[0,T_{exp})$ and $Z_t=+\infty$ for $t\geq T_{exp}$.\\
    When $T_{exp}=+\infty$ almost surely,  the process is said to be  \emph{conservative} (or non-explosive).  Grey's condition gives a criteria for CSBP, which has  been recently extended  to CSBP in random L\'evy environment in \cite{ZL}.   \\

We have thus proved the tightness of the process and identified the limiting values of $(X^N_{[v_N.]})_N$ as weak solutions of a SDE. Uniqueness of the SDE \eqref{example} (Hypothesis {\bf H3}) has to be proven to conclude for the convergence. From the pioneering works of Yamada and Watanabe, several results have been obtained for pathwise uniqueness relaxing the Lipschitz conditions on coefficients. In particular, general results for positive processes with jumps have been obtained in \cite{FL, li} and used in random environment, see in particular \cite{PP}. This technique allows us to conclude for strong uniqueness before explosion. Here, the process may explode in finite time, which is  already the case for classical CSBP  and in our framework, explosion can also be due to cooperation or random environment.  This leads us to consider two cases. In te first case,  we obtain a convergence in law on the state space $[0,\infty]$ under an additional regularity assumption on the drift term close to infinity. This 
result extends the classical criterion for convergence of Galton-Watson processes, adding  both  random environment and interaction.  In the second case,  we obtain the convergence of  $Z^N_{[v_N.]}$ in $[0,\infty)$ when the limiting values of the sequence of processes are non-explosive. We observe that it also extends  results
 of \cite{BS}  to L\'evy environment with infinite variation and of \cite{PD} by relaxing moment assumptions for interaction. \\
The pathwise uniqueness   of the SDE allows us  to capture limiting processes where infinity is either absorbing or non-accessible. Other situations are interesting, where  infinity is regular and uniqueness in law could be invoked. In particular, we refer to  \cite{Foucart} for a
criterion for reflection at infinity of CSBP with quadratic competition and  \cite{Andreasal} and \cite{BK} for similar issues.  

\subsection{Explosive CSBP with interaction and random environment}
\label{explo}

\me In this section, the process may be non-conservative, i.e. $T_{exp}$ may be finite.  In order to obtain the strong uniqueness and following  \cite{FL, Li},
we consider the following assumption concerning the regularity   of the drift term.
\begin{assumptionA3}
There exist continuous functions $r$, $b_r$ and $b_{d}$  such that for any $z \in [0,\infty)$,
\be
\label{hypb1}
e^{-z}\left(zg(z)- \frac{\sigma_{\e}^2}{2} z^2+ \int_{[-1/2,1]} (1-e^{-zw}-zh_{\e}(w))\nu_{\e}(dw)\right)=b_r(z)+b_{d}(z), 
\ee
with $r$ non-negative, non-decreasing and concave, $\int_0^. 1/r(z)dz =\infty$, $\vert b_r(-\log(u))-b_r(-\log(u'))Ê\vert \leq r(\vert u-u'\vert)$ for any $u,u'\in (0,1]$ and $b_{d}$  non-increasing.
\end{assumptionA3}
\begin{thm} 
\label{CSBPexplosv}
We assume that  {\bf A1},  {\bf A2} and {\bf A3} hold. \\
Then there exists a unique strong solution $(Z,Y)\in \mathbb D([0,\infty),[0,\infty]\times  \R)$ of \eqref{environnement} and \eqref{example} and 
$$ \left( \left(\frac{1}{N}Z^{N}_{[v_Nt]},  S^N_{[v_Nt]} \right) : t\in [0,\infty) \right) \Rightarrow ((Z_t,Y_t) : t\in [0,+\infty))$$
in  $\mathbb D([0,\infty), [0,\infty]\times \R)$, where $[0,\infty]$ is endowed with $d(z_1,z_2)=\vert \exp(-z_1)-\exp(-z_2)\vert$.
\end{thm}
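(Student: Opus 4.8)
The plan is to combine the tightness and identification already available with a pathwise uniqueness statement for the limiting equation, and then to apply Theorem~\ref{main}; Assumption~{\bf A3} is designed precisely to supply the missing uniqueness.

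Since {\bf A1} and {\bf A2} hold, Theorem~\ref{idcsbp} already gives that the sequence $\bigl(X^N_{[v_N\cdot]}\bigr)_N$, with $X^N_k=\bigl(\exp(-Z^N_k/N),S^N_k\bigr)$, is tight in $\mathbb D([0,\infty),[0,1]\times\R)$ and that every limit point is a weak solution of \eqref{eds2}. By It\^o's formula, as recorded in \eqref{example}, such a limit point is in one-to-one correspondence with a solution $(Z,Y)$ of \eqref{environnement}--\eqref{example} in $[0,\infty]\times\R$, $Z$ being frozen at $+\infty$ on $[T_{exp},\infty)$. It therefore remains only to check Hypothesis~{\bf (H3)}, i.e.\ uniqueness in law of \eqref{eds2} on $\mathbb D([0,\infty),[0,1]\times\R)$ for the initial law of $X_0=(\exp(-Z_0),0)$, after which strong existence follows from weak existence (a byproduct of the tightness) via the Yamada--Watanabe argument.

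I would prove the stronger fact that \eqref{eds2} is pathwise unique. The second coordinate solves \eqref{environnement} and is the L\'evy process driven by $(B^{\e},N^{\e})$, hence pathwise unique; so it suffices to handle the first coordinate $X^1=\exp(-Z)$, equivalently $Z$ up to $T_{exp}$, for fixed driving noises $(B^{\D},N^{\D},B^{\e},N^{\e})$. For two such solutions I would run the Yamada--Watanabe approximation, naturally in the compactified variable $u=e^{-z}$: the diffusion coefficient $\sigma_{\D}\sqrt{Z_s}$ and the branching-type jump integral $\int\un_{\theta\le Z_{s-}}h_{\D}(r)\,\widetilde N^{\D}$ are exactly the two canonical terms covered by Fu--Li \cite{FL} and Li--Pu \cite{li}; the environment contributes the linear term $\int Z_{s-}\,dY_s$ together with the finite-activity part of its jumps, locally Lipschitz in $Z$; and the remaining drift, expressed in the variable $u=e^{-z}$, is precisely the left-hand side of \eqref{hypb1}, which {\bf A3} splits as $b_r+b_d$ with $b_r$ dominated by a concave, non-decreasing modulus $r$ with $\int_0^{\cdot}1/r(z)\,dz=\infty$ and $b_d$ non-increasing. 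One is then squarely in the scope of the pathwise uniqueness theorems of \cite{FL,li}: the modulus $r$ absorbs $b_r$, the monotonicity of $b_d$ makes its contribution to the It\^o bound non-positive, and a Gronwall argument closes the estimate, giving pathwise uniqueness (and strong existence) on $[0,T_{exp})$. To promote this to the compact state space $[0,\infty]$ I would observe that all first-component coefficients of \eqref{eds2} vanish at $x^1=0$ — this is the content of \eqref{enzero}--\eqref{enzeroo}, the vanishing of $b_1$ relying on \eqref{hypG} and the domination of Lemma~\ref{chiant} — so that $\{x^1=0\}$, i.e.\ $\{Z=+\infty\}$, is absorbing; every solution on $[0,T_{exp})$ then extends in a unique way to $[0,\infty)$ by freezing it after $T_{exp}$. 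This establishes pathwise uniqueness, hence uniqueness in law, of \eqref{eds2} on $\mathbb D([0,\infty),[0,1]\times\R)$ — which is {\bf (H3)} — together with the existence of a unique strong solution $(Z,Y)$.

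Finally, $X^N_0=\bigl(\exp(-[NZ_0]/N),0\bigr)\to(\exp(-Z_0),0)$ in law, {\bf (H0)}, {\bf (H1)}, {\bf (H2)} hold by the proof of Theorem~\ref{idcsbp}, and {\bf (H3)} has just been checked, so Theorem~\ref{main} yields $\bigl(X^N_{[v_N\cdot]}\bigr)_N\Rightarrow X$, the solution of \eqref{eds2}, in $\mathbb D([0,\infty),[0,1]\times\R)$. Composing with $(u,y)\mapsto(-\log u,y)$, which is an isometry of the first coordinate for the metric $d$ and hence induces a continuous map of Skorokhod spaces, and recalling $-\log X^{N,1}_k=Z^N_k/N$ together with the It\^o correspondence \eqref{example}, one obtains the announced convergence of $\bigl((Z^N_{[v_N\cdot]}/N,\,S^N_{[v_N\cdot]})\bigr)_N$ to $(Z,Y)$ in $\mathbb D([0,\infty),[0,\infty]\times\R)$. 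I expect the pathwise uniqueness step to be the main obstacle: one must carefully match the mixed diffusion / branching-jump / interaction structure of \eqref{example} with the hypotheses of \cite{FL,li}, keep track of the fact that {\bf A3} is stated in the compactified variable $u=e^{-z}$ rather than in $z$, and justify the change of variables between the $Z$-equation and the $X^1$-equation consistently up to and across $T_{exp}$; the absorption argument at infinity, though elementary, is what makes the compact space $[0,\infty]$ legitimate and relies on the integrability provided by Lemma~\ref{chiant}.
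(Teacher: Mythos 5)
Your proposal follows essentially the same route as the paper: invoke Theorem \ref{idcsbp} for tightness and identification, verify {\bf (H3)} by proving pathwise uniqueness of \eqref{eds2} in the compactified variable $u=e^{-z}$ via the Fu--Li/Li--Pu criterion with the drift split provided by {\bf A3} (plus the $-u\log u$ modulus for the residual linear-in-$Z$ terms, which the paper handles through Lemma \ref{lemcoriace} and Lemma \ref{bornefonction}), conclude that $\{x^1=0\}$ is absorbing so the solution is frozen after $T_{exp}$, and finally apply Theorem \ref{main} and push the convergence through $(u,y)\mapsto(-\log u,y)$. This matches the paper's argument in both structure and key ingredients, so only the routine verifications you defer (the $|y-y'|$ bound on the jump kernel and the Osgood-type modulus for the $u\log u$ drift) would need to be written out as in the paper's appendix lemmas.
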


\begin{proof}[Proof of Theorem \ref{CSBPexplosv}] We first remark that the convergence in law  of $(X^N_{[v_N.]})_N$
 in $\mathbb D([0,\infty),[0,1]\times \R)$
implies the weak convergence of $(Z^{N}_{[v_N.]}/N,  S^N_{[v_N.]})$ to $(-\log(X^1),Y)$ in $\mathbb D([0,\infty), [0,\infty]\times \R)$, where $[0,\infty]$ is endowed with $d$ and $-\log(0)=\infty$. \\
We recall from the previous section that $X^N$ satisfies ${\bf (H1)}$ and ${\bf (H2)}$. To apply Theorem \ref{main}, it remains to check that $X$ defined in \eqref{eds2} is unique in law. \\
Let us prove that under  {\bf A3}, pathwise uniqueness holds for $X$ in $\mathbb{D}([0,T], [0,1]\times \R)$.
First, the second component $Y$ of $X$ is a L\'evy process  and the pathwise uniqueness is well known. 
Second, the equation for the first component $X^1$ writes
\ben
X_t^1&=&X_0^1+\int_0^t \widetilde{b}_1(X_s^1)ds+\int_0^t\sigma(X_s^1)dW_s+\int_0^t\int_{V\setminus V_0} K^1(X_{s-}^1,v) N(ds,dv)\\
&& \qquad \qquad + \int_0^t\int_{V_0} K^1(X_{s-}^1,v) \widetilde{N}(ds,dv)
\een
where $\sigma(u)=\sqrt{\sigma_{1,1}(u)^2+\sigma_{1,2}(u)^2}$ and  $W$ is a Brownian motion independent of $X_0^1$ and of the Poisson point measure $N$. The set  $V_0$ is defined as $V_{0}=[0,1]\times [-1/2,1] \cup (1,\infty]\times (0,\infty)$. For  $x_1=\exp(-z)$ and $\rho= \alpha_{E} + \gamma_1^{\D}- \int_{(-1,\infty)-[-1/2,1]} h_{\e}\nu_{\e}$,
$$\widetilde{b}_1(x_1)= e^{-z}\bigg(  -z\rho  - zg(z)+\,  {z^2\over 2}\sigma_{\e}^2 + \int_{[-1/2,1]} (zh_{\e}(w)-f_{z}(w))\nu_{\e}(dw)\bigg). $$

\me
We first observe that
$\mu(V\setminus V_0)<\infty$.  Moreover  , 
combining \eqref{expb1} and \eqref{hypb1}, we have
$$\widetilde{b}_1(x_1)=x_1\log(x_1) \rho -b_r(-\log(x_1))-b_{d}(-\log(x_1))=\widetilde{b}_r(x_1)+\widetilde{b}_{d}(x_1),$$
where $\widetilde{b}_{d}=-b_{d}(-\log .)$ is non-decreasing and $\widetilde{b}_r$ satisfies 
$\vert \widetilde{b}_r(x_1)-\widetilde{b}_r(\widetilde{x}_1)\vert\leq  \widetilde{r}(\vert x_1-\widetilde{x}_1\vert)$ for $x_1,\widetilde{x}_1\in [0,1]$,  with $\int_0^. 1/\widetilde{r}(z) dz=\infty$
and $\widetilde{r}$ non-decreasing and concave. Indeed using Lemma \ref{lemcoriace} in Appendix, one can take $\widetilde{r}(y)=r(y)+ Cy  +C_1r_1(y)$, with $r_1(x_1)=-x_1\log(x_1)$ and $C,C_1$ well chosen.\\
Then  we easily check that $\sigma^2$ is Lipschitz continuous and $\vert \sigma(y)- \sigma(y')\vert^2 \leq \vert \sigma(y)^2- \sigma(y')^2\vert$
and 
 $y\rightarrow y+K^1(y,v)$ is non-decreasing.  \\
 Finally,
\ben
\int_{V_0} (K^1(y,v)-K^1(y',v))^2\mu(dv) &=&  g_1(y,y')\int_{\R^{+}}(e^{-r}-1)^2\nu_{\D}(dr)\\
&&\qquad +\int_{[-1/2,1]} (g_2(y,w)-g_2(y',w))^2\nu_{\e}(dw),
\een
where for any $y, y' \in (0,1]$,
\be
\label{expg}
g_1(y,y')=\min(-\log(y),-\log(y')) (y-y')^2+ \min(y,y')^2\vert \log(y) -\log(y') \vert
\ee
(with a null extension at $0$) 
and 
\be
\label{autre}
g_2(y,w)=u(e^{\log(y)w}-1).
\ee
Using now Lemma \ref{bornefonction} in Appendix and the integrability assumptions on $\nu_{\D}$ and $\nu_E$, there exists $L>0$ such that 
$$\int_{V_0} (K^1(y,v)-K^1(y',v))^2\mu(dv)\leq L\vert y-y'\vert.$$
Then we can apply  Theorem 3.2 in \cite{li} and conclude by observing that $X_t=0$ for $t\geq T_{exp}$ by pathwise uniqueness.
\end{proof}
 Recently, 
 Pardoux and  Dram\'e \cite{PD} have proven the convergence of some continuous time and discrete space processes to CSBP with interaction. Here we relax their conservative assumption and extend to random environments and to general classes of reproduction laws, in a discrete time setting.

\paragraph{Application to Galton-Watson processes with cooperative effects.}
\label{secGWcop}

\me  Note that  Theorem \ref{CSBPexplosv} allows us to recover the convergence in law of the Galton-Watson processes $(\widehat Z^N_{[v_{N}.]})_{N}$  defined as in \eqref{def-proc} with the reproduction laws  $L^N\in \mathbb N$ satisfying: \be
&&  \lim_{N\to \infty }  v_NN\,\E(h_{\D}((L^N-1)/N)
))= \alpha_{\D}; \qquad
  \lim_{N\to \infty }  v_NN \, \E(h^2_{\D}((L^N-1)/N
))=\beta_{\D}; \nonumber \\ 
&&
\qquad \qquad \qquad  \lim_{N\to \infty }    v_NN \, \E(f((L^N-1)/N
))  =  \int_{0}^{\infty} f(v) \nu_{\D}(dv),\label{condtripGW}
\ee
 for any continuous bounded  function $f$  vanishing in a neighborhood of $0$, 
where $h_{\D}$ is  a truncation function, $\alpha_{\D}\in \R$, $\int_{(0,\infty)} (1\wedge v^2) \, \nu_{\D}(dv)<\infty$,
$\beta_{\D}=\sigma_{\D}^2+ \int_{(0,\infty)} h_{\D}^2 \, \nu_{\D}$ and $\sigma_{\D}\geq 0$.

\me  The limiting process is the (possibly explosive) CSBP with characteristics $(\alpha_{\D},\beta_{\D},\nu_{\D})$  solution of the stochastic differential equation 
\be
\label{edsgw}
\widehat Z_t &=& Z_0+\alpha_{\D}\int_0^{t} \widehat Z_sds +  \sigma_{\D} \int_0^{t}  \sqrt{\widehat Z_s} dB^{\D}_s + \\
&& \ +\int_0^{t} \int_{(0,\infty)^2} {\bf 1}_{\theta\leq \widehat Z_{s-}}h_{\D}(r) \widetilde N^{\D}(ds, d\theta, dr)+ \int_0^t\int_{(0,\infty)^2}  {\bf 1}_{\theta\leq \widehat Z_{s-}}(r - h_{\D}(r))  N^{\D}(dt, d\theta , dr),  \nonumber
\ee
where  $N^{\D}$ is a Poisson measure with intensity $dtd\theta\nu_{\D}(dr)$.

 \me As a new application of Theorem \ref{CSBPexplosv}, we extend the convergence above  by  taking into account a cooperative effect. In this case, the interactions  prevent the use of the classical generating function tool.    The reproduction random variable  $L^N(n)$ depends on the total population size $n$ and we set  
 \be
 \label{repro-cons}L^N(n)=L^N+ \mathcal E^N(n),\ee
where for each $n\geq 0$, $\mathcal E^N(n)\in \{0,1\}$   is a  Bernoulli random variable independent of $L^N$ and 
\be
\label{defBern}
\p\left(\mathcal E^N(n)=1\right)=\frac{g(n/N)\wedge v_N}{v_N}
\ee
for some function  $g \in \mathcal C^1([0,\infty), [0,\infty))$.  
 The process $Z^N$ is defined as in \eqref{def-proc} with this reproduction random variable $L^N(n)$.
 
 \me We obtain the following convergence result.
\begin{prop} We assume that $v_N\rightarrow \infty$ and that  \eqref{condtripGW}, \eqref{repro-cons} and \eqref{defBern} hold. We also assume that $z\rightarrow \exp(-z)zg(z)$ is non-increasing for $z$ large   enough and goes to $0$ as $z\rightarrow \infty$.\\
Then $(Z^{N}_{[v_N.]}/N : t\geq 0)$ converges in  $\mathbb D([0,\infty), [0,\infty]\times \R)$ to the unique strong solution   $Z$  of
\be
\label{exampleGWcoop}
Z_t &=& Z_0+\alpha_{\D}\int_0^{t} Z_sds+\int_0^{t}  Z_s g(Z_s) ds +  \sigma_{\D} \int_0^{t}  \sqrt{Z_s} dB^{\D}_s  \\
&& \ +\int_0^{t} \int_{(0,\infty)^2} {\bf 1}_{\theta\leq Z_{s-}}h_{\D}(z) \widetilde N^{\D}(ds, dz, d\theta)+ \int_0^t\int_{(0,\infty)^2}  {\bf 1}_{\theta\leq Z_{s-}}(z - h_{\D}(z))  N^{\D}(dt, dz, d\theta) \nonumber
\ee
for $t<T_{exp}$ and $Z_t=+\infty$ for $t\geq T_{exp}$.
\end{prop}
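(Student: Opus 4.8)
The plan is to derive this Proposition from Theorem~\ref{CSBPexplosv} by specializing to a trivial environment. I would take $E^N\equiv 0$, so that Assumption~{\bf A1} holds with $\alpha_{\e}=\beta_{\e}=\sigma_{\e}=0$ and $\nu_{\e}=0$; then the environment walk $S^N$ and the limiting L\'evy process vanish, $\gamma^{\e}_{\cdot}\equiv 0$, the limiting system \eqref{example} collapses to \eqref{exampleGWcoop} (which is \eqref{edsgw} with the extra cooperative drift $\int_0^{t}Z_s g(Z_s)\,ds$), and the identity \eqref{hypb1} in~{\bf A3} becomes a statement about $z\mapsto e^{-z}zg(z)$ alone. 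It then remains to verify Assumptions~{\bf A2} and~{\bf A3} for the cooperative reproduction law $L^N(n)=L^N+\mathcal E^N(n)$ of \eqref{repro-cons}--\eqref{defBern}, and to read the limit off from Theorem~\ref{CSBPexplosv}; the hypotheses $g\in\mathcal C^1$, \eqref{hypG}, \eqref{condtripGW} and the monotonicity of $z\mapsto e^{-z}zg(z)$ at infinity are exactly tailored to these two assumptions.

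\emph{Checking {\bf A2}.} Since $E^N\equiv 0$, \eqref{cond-A} reduces to $A^N_{j,\ell}(z)=v_N P^N_j(z,0)$ with $P^N_j$ as in \eqref{noteP}. Writing $n=[Nz]$ and $p_N(n)=(g(n/N)\wedge v_N)/v_N$, independence of $L^N$ and $\mathcal E^N(n)$ gives $\E(e^{-\frac{j}{N}(L^N(n)-1)})=\phi_N(j)(1-p_N(n)(1-e^{-j/N}))$ with $\phi_N(j):=\E(e^{-\frac{j}{N}(L^N-1)})$, hence
\[
A^N_{j,\ell}(z)=v_N\big(\phi_N(j)^{n}-1\big)+\phi_N(j)^{n}\,v_N\big((1-p_N(n)(1-e^{-j/N}))^{n}-1\big).
\]
I would treat the first summand exactly as in the pure Galton--Watson case: \eqref{condtripGW} yields $\phi_N(j)=1+\delta_N(j)/(v_N N)$ with $\delta_N(j)\to-\gamma^{\D}_{j}$, whence $v_N(\phi_N(j)^{n}-1)\to-\gamma^{\D}_{j}z$ uniformly on compact $z$-sets. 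For the second summand I would use $p_N(n)=g(n/N)/v_N$ (valid on a fixed compact once $N$ is large) and $1-e^{-j/N}=j/N+O(N^{-2})$ to get $v_N((1-p_N(n)(1-e^{-j/N}))^{n}-1)\to-j\,z\,g(z)$, with $\phi_N(j)^{n}\to 1$, uniformity on compacts coming from uniform continuity of $g$. As $\gamma^{\e}_{\cdot}\equiv 0$ this is the pointwise content of \eqref{A2exemple}, with limit $A^N_{j,\ell}(z)\to-j z g(z)-\gamma^{\D}_{j}z$. The hard part, which I expect to be the main obstacle, is upgrading this to the uniform bound over all $z\ge 0$: for $z$ large one has $e^{-kz}jzg(z)\le e^{-z}zg(z)\to 0$ and $e^{-kz}|\gamma^{\D}_{j}|z\to 0$, and I would control $e^{-kz}A^N_{j,\ell}(z)$ there by a regime analysis according to whether $g(n/N)$ is dominated by $v_N$ (the cut-off $\wedge v_N$ is inactive on compacts and, when $g(n/N)>v_N$, harmless since then $e^{-kz}v_N\le e^{-z}g(n/N)\to 0$ as $z\to\infty$), systematically invoking $e^{-z}zg(z)\to 0$ and the monotonicity assumption. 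By Remark~\ref{remarques}(iii) it suffices to verify \eqref{A2exemple} for $z\in\Nat/N$, which is convenient.

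\emph{Checking {\bf A3}.} With $\sigma_{\e}=0$ and $\nu_{\e}=0$ the left-hand side of \eqref{hypb1} is $\psi(z):=e^{-z}zg(z)$. I would pick $z_0$ beyond which $\psi$ is non-increasing (it exists by hypothesis); since $g\in\mathcal C^1$, $\psi$ is Lipschitz on $[0,z_0]$, say with constant $C_0$. Set $b_{d}(z)=\psi(z)$ for $z\ge z_0$ and $b_{d}(z)=\psi(z_0)+C_0(z_0-z)$ for $z\le z_0$: then $b_{d}$ is continuous and non-increasing on $[0,\infty)$. The remainder $b_r:=\psi-b_{d}$ is Lipschitz on $[0,z_0]$ and vanishes on $[z_0,\infty)$, so $u\mapsto b_r(-\log u)$ is Lipschitz on $(0,1]$ (it vanishes on $(0,e^{-z_0}]$ and $-\log$ is Lipschitz on $[e^{-z_0},1]$). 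Hence {\bf A3} holds with this $b_{d}$ and $r(y)=C_1 y$ for a suitable $C_1>0$, which is non-negative, non-decreasing, concave, and satisfies $\int_{0+}dy/r(y)=+\infty$.

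\emph{Conclusion.} With {\bf A1}, {\bf A2}, {\bf A3} verified, Theorem~\ref{CSBPexplosv} gives the convergence in law of $(Z^N_{[v_N.]}/N)_N$ in $\mathbb D([0,\infty),[0,\infty]\times\R)$ (the environment coordinate being identically $0$) to the unique strong solution of \eqref{example}, which in the present environment-free case is exactly \eqref{exampleGWcoop}, with $Z_t=+\infty$ for $t\ge T_{exp}$. To summarize, the routine parts are the pointwise computation of the characteristics and the elementary decomposition for {\bf A3}; the genuine difficulty is the uniform-in-$z$ estimate required by {\bf A2}, where one must reconcile the compact-$z$ regime (handled by uniformity of \eqref{condtripGW} and uniform continuity of $g$) with the large-$z$ regime (handled only by the exponential weight $e^{-kz}$ together with $e^{-z}zg(z)\to 0$ and the monotonicity hypothesis), while verifying that the cut-off $g(n/N)\wedge v_N$ in \eqref{defBern} causes no trouble.
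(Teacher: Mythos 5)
Your overall route is exactly the paper's: embed the model with a trivial environment so that {\bf A1} holds vacuously and $\gamma^{\e}_{\cdot}\equiv 0$, verify {\bf A2} by a Taylor expansion of $A^N_{j,\ell}(z)=v_N P^N_j(z,0)$, verify {\bf A3} by splitting $\psi(z)=e^{-z}zg(z)$ into a non-increasing part beyond some $z_0$ plus a compactly Lipschitz remainder, and conclude by Theorem~\ref{CSBPexplosv}. Your {\bf A3} decomposition and the final step are complete and match the paper (the paper takes $b_{d}$ constant below $z_0$, you take it affine; both work), and your compact-set computation of the limit $-jzg(z)-\gamma^{\D}_j z$ from \eqref{condtripGW} is the same as the paper's.

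The genuine gap is the uniform-in-$z$ estimate in \eqref{A2exemple}, which you yourself flag as the main obstacle but whose proposed regime split does not close it. Splitting according to whether $g(n/N)\le v_N$ or $g(n/N)>v_N$ leaves uncovered precisely the regime where the cut-off is inactive but $z$ (or $zg(z)$) is of order $v_N$ or larger: there your compact-set expansions no longer apply, and the crude bound $e^{-kz}\vert A^N_{j,\ell}(z)\vert \le e^{-kz}v_N\big(e^{cz/v_N}+1\big)$ cannot be made small by ``$z$ large'' alone, since for $z$ in any fixed range $[A,\infty)$ one has $\sup_{z\ge A} v_N e^{-z(k-c/v_N)}=v_N e^{-A(k-c/v_N)}\to\infty$ as $N\to\infty$; smallness requires comparing $z$ (or $zg(z)$) with $v_N$, not with a fixed threshold. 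This is exactly what the paper's Appendix~\ref{GWcoop} does: it splits at $z+zg(z)\le v_N$ versus $z+zg(z)\ge v_N$, proves in the first regime the quantitative error bound $e^{-jz}\vert C^N_j(z)+z\gamma^{N,D}_j+jzg(z)\vert\le c\,e^{-jz}(1/N+zg(z)/N)$, and in the second regime uses that $\min\{z: z+zg(z)\ge v_N\}\to\infty$ together with either $v_N\le 2z$ (then $e^{-jz}v_Ne^{cz/v_N}\le 2ze^{-jz/2}$) or $v_N\le 2zg(z)$ (then the bound $e^{-jz}\,2zg(z)\,e^{c}$), both killed by $e^{-z}zg(z)\to 0$; note also that the monotonicity of $\psi$ is needed only for {\bf A3}, not here. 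Your sketch for the cut-off-active regime ($e^{-kz}v_N\le e^{-z}g$) is morally the same trick, but without the exponent bound $\E\big(e^{-\frac{j}{N}(L^N(n)-1)}\big)^{[Nz]}\le e^{cz/v_N}$ and without the comparison of $z$, $zg(z)$ with $v_N$ in the cut-off-inactive regime, the uniform verification of {\bf A2} is not established, and Theorem~\ref{CSBPexplosv} cannot yet be invoked.
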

The monotonicity assumption on $z\rightarrow \exp(-z)zg(z)$ is  chosen for sake of simplicity to obtain the   pathwise uniqueness. It captures in particular simple cooperative functions as $g(z)=cz^{\alpha}$  ($c> 0, \alpha>0$) or $g(z)=c+b(1-1/(1+z))$ ($c\geq 0, b>0$).\\
 We observe  that the limiting process $Z$ may be explosive, 
 due to the heavy tails of the reproduction random variable $L^N$ (i.e. the CSBP part is explosive) or 
 due to cooperative effects (note for instance that   $y'_t=y_tg(y_t)$ is explosive if  $g(z)=z^{\alpha}$, $\alpha>0$). \\
 Finally, we add that extensions of the last convergence to random environments are possible in several ways, in particular catastrophes can be added and {\bf A3} still holds. But if $\sigma_{\e}>0$, the function $g$ has to compensate the quadratic term so that {\bf A3} can be fulfilled. Otherwise,  other arguments have to be invoked and one may expect  to get uniqueness in law using quenched Laplace exponent (without interaction) or duality arguments.  \\
\begin{proof}
Let us introduce \be
\label{expcoop}
C_{j}^N(z)=v_N\left(\E\left(e^{-j\Sigma^N}\right)^{Nz}\left( \frac{g(z)\wedge v_N}{v_N}e^{-j/N} +\left(1-\frac{g(z)\wedge v_N}{v_N}\right)\right)^{Nz}-1\right).
\ee
By a Taylor expansion (developed in  Appendix \ref{GWcoop}), one can prove that \be
\label{pointdel}
\sup_{z; zN\in \mathbb{N}} e^{-kz}\big\vert  C_{j}^N(z) +  \ j z\, g(z)+ \gamma_{j}^{\D}\,z \big\vert\stackrel{N\rightarrow \infty}{\longrightarrow} 0.
\ee
 Assumption {\bf A2} is fulfilled for $z\in {\mathbb{N}\over N}$, which is enough as commented in Remark \ref{remarques}, while {\bf A1} is trivial  (no random environment). As $g \in \mathcal C^1([0,\infty), [0,\infty))$  and $\exp(-z)zg(z)$ is non-increasing for $z$ large   enough and goes to $0$ as $z$ goes to infinity,  there exist $b_r$ and $b_{d}$ such that
$$e^{-z}zg(z)=b_r(z)+b_{d}(z),$$
with $b_{d}$ non-increasing and $b_r(-\log(u))$ Lipschitz continuous such that  Assumption {\bf A3} is fulfilled. Indeed
there exists $z_0$ such that  $z\rightarrow e^{-z}zg(z)$ is non increasing for $z\geq z_0$ and one can take
$b_{d}(z)=e^{-z}zg(z)$ for  $z\geq z_0$ and $b_{d}$ constant for $z\leq z_0$, and  $b_r(z)= e^{-z}zg(z)-b_{d}(z)$.\\
We conclude using Theorem \ref{CSBPexplosv}.
 \end{proof}

\subsection{Conservative CSBP with interaction and random environment}

\bi We focus on the conservative case. Now $+\infty$ is not accessible and
the  pathwise uniqueness is  obtained without Assumption  {\bf A3}. 
\begin{thm} \label{cascon} We assume that  {\bf A1} and  {\bf A2} hold and  that any  solution of \eqref{example} is conservative, i.e. $T_{exp}=+\infty$ a.s.
Then  there exists a unique strong solution $(Z,Y)\in \mathbb D([0,\infty),  [0,\infty)\times \R)$ of \eqref{environnement} and \eqref{example} and 
$$ \left( \left(\frac{1}{N}Z^{N}_{[v_Nt]},  S^N_{[v_Nt]} \right) : t\in [0,\infty)\right) \Rightarrow ((Z_t,Y_t) : t\in [0,\infty))$$
in  $\mathbb D([0,\infty), [0,\infty)\times \R)$.
\end{thm}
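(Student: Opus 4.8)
The plan is to verify Hypothesis \textbf{(H3)} for the system \eqref{environnement}--\eqref{example} and then to invoke Theorem \ref{main}, following the scheme of the proof of Theorem \ref{CSBPexplosv}; the novelty is that the conservativeness hypothesis replaces Assumption \textbf{A3} in the uniqueness step and, simultaneously, lifts the state space of the first coordinate from $[0,\infty]$ to $[0,\infty)$. First I would recall that, by Theorem \ref{idcsbp}, Assumptions \textbf{A1} and \textbf{A2} already give that $(X^N_{[v_N.]})_N$ is tight in $\mathbb D([0,\infty),[0,1]\times\R)$ and that every limiting value is a weak solution of \eqref{eds2}; equivalently $(Z^N_{[v_N.]}/N,S^N_{[v_N.]})_N$ is tight in $\mathbb D([0,\infty),[0,\infty]\times\R)$ and every limiting value, written $(Z,Y)$ with $Z=-\log X^1$, is a weak solution of \eqref{environnement}--\eqref{example}; in particular weak existence holds. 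Since by hypothesis every solution of \eqref{example} is conservative, $T_{exp}=+\infty$ a.s., hence $Z_t<\infty$ and $X^1_t=\exp(-Z_t)\in(0,1]$ for all $t$ a.s.; so the limiting law is carried by $\mathbb D([0,\infty),(0,1]\times\R)$, on which $(v,w)\mapsto(-\log v,w)$ is continuous, and convergence in law of $(X^N_{[v_N.]})_N$ will transfer by the continuous mapping theorem to convergence of $(Z^N_{[v_N.]}/N,S^N_{[v_N.]})_N$ in $\mathbb D([0,\infty),[0,\infty)\times\R)$.

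It then remains to prove pathwise uniqueness for \eqref{environnement}--\eqref{example}, which combined with weak existence yields via Yamada--Watanabe a unique strong solution and uniqueness in law, i.e.\ \textbf{(H3)}. Pathwise uniqueness of the L\"evy SDE for $Y$ is classical. For $Z$ I would argue by localisation: given two solutions $Z,Z'$ built from the same Brownian motions, Poisson measures and $Y$, set $\tau_n=\inf\{t\ge0:\,Z_t\vee Z'_t\ge n\}$, so that conservativeness gives $\tau_n\uparrow\infty$ a.s. On $[0,\tau_n]$ all the data of \eqref{example} restricted to the compact $[0,n]$ fall within the scope of Theorem 3.2 in \cite{li} (see also \cite{FL}): incorporating the compensators, the drift is Lipschitz on $[0,n]$ because $g$ is locally Lipschitz; the diffusion part $\sigma_\D\sqrt z\,dB^\D_s+\sigma_\e z\,dB^\e_s$ rewrites as $\sqrt{\sigma_\D^2 z+\sigma_\e^2 z^2}\,dW_s$ for a Brownian motion $W$ and has the Yamada H\"older-$\tfrac12$ modulus on $[0,n]$; the branching jump terms involve the non-decreasing map $z\mapsto z+\un_{\theta\le z}h_\D(r)$ and satisfy $\int_{(0,\infty)^2}(\un_{\theta\le z}-\un_{\theta\le z'})^2h_\D^2(r)\,d\theta\,\nu_\D(dr)=|z-z'|\int_{(0,\infty)}h_\D^2(r)\,\nu_\D(dr)$, together with the analogous control of the finite-activity big-jump part; and the environment-driven term $\int_0^t Z_{s-}\,dY_s$ makes $Z$ jump by $Z_{s-}\Delta Y_s\ge-Z_{s-}$, keeping $Z\ge0$, while $\int_{(-1,\infty)}\big((z-z')h_\e(w)\big)^2\nu_\e(dw)\le C|z-z'|^2$ controls its compensated small-jump piece. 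Hence $Z=Z'$ on $[0,\tau_n]$ for every $n$, and letting $n\to\infty$ gives $Z=Z'$ on $[0,\infty)$. Therefore \textbf{(H3)} holds, and Theorem \ref{main} applied to $(X^N_{[v_N.]})_N$ together with the continuous mapping step above concludes.

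The delicate point is the pathwise-uniqueness step, namely placing carefully, inside the hypotheses of \cite{li}, the three jump mechanisms of \eqref{example} --- the compensated small jumps and the finite-activity uncompensated big jumps of the L\"evy environment, driven through the coefficient $z\mapsto z(1+w)$ whose positivity rests on $w>-1$, and the branching jumps governed by the indicator kernel $\un_{\theta\le z}$ --- and then combining the resulting local uniqueness with the conservativeness hypothesis through the localising sequence $(\tau_n)_n$. By contrast with Theorem \ref{CSBPexplosv}, no concave (Osgood) modulus of continuity for the drift is needed here: since the process never reaches $+\infty$, the $x\log x$ behaviour near $X^1=0$ never arises and on each compact the drift of \eqref{example} is genuinely Lipschitz, so Assumption \textbf{A3} can be dispensed with.
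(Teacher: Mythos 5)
Your proposal is correct and follows essentially the same route as the paper: the paper also obtains \textbf{(H3)} by combining weak existence from Theorem \ref{idcsbp} with pathwise uniqueness of \eqref{example} before explosion (which it gets either from \cite{PP} or by ``adapting the proof of Theorem \ref{CSBPexplosv}''), invokes Yamada--Watanabe, applies Theorem \ref{main} to the compactified process $X^N$, and then uses $T_{exp}=+\infty$ to transfer the convergence from $[0,1]$ to $[0,\infty)$ via $-\log$. Your explicit localisation with the stopping times $\tau_n$ and Theorem 3.2 of \cite{li} on compacts is exactly the ``adapting'' route spelled out in detail, and your observation that conservativeness makes Assumption \textbf{A3} unnecessary matches the paper's reasoning.
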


\me 
  Theorem \ref{cascon} allows us to obtain various scaling limits to diffusions with jumps  due either to the environment or to demographic stochasticity. The conditions for tightness and identification are  very general. The conservativeness 
can be obtained  by different methods as  moment estimates or comparison   with a conservative CSBP or conservative CSBP in random environment when 
the process is  competitive or  with bounded cooperation.

\begin{proof}
Using that $T_{exp}=+\infty$ a.s., one can check that pathwise uniqueness holds for $\eqref{example}$. It can be achieved by  using the pathwise uniqueness for $Z$ obtained in  \cite{PP} before $T_{exp}$
or  by adapting the proof of  Theorem \ref{CSBPexplosv}. 
We recall from Theorem \ref{idcsbp} that weak existence also holds for \eqref{eds2} under  {\bf A1} and  {\bf A2}, so that both strong existence and weak uniqueness hold.\\
Then {\bf (H3)} is fulfilled and we can apply Theorem \ref{main} to $X^N$ and get the weak convergence of $\big(\exp(-Z^{N}_{[v_N.]}/N),  S^N_{[v_N.]}\big)$ to $X$ in $\mathbb D([0,\infty),[0,1]\times \R)$. Since  $T_{exp}=+\infty$, the weak convergence of $(Z^{N}_{[v_N.]}/N,  S^N_{[v_N.]})$ in $\mathbb D([0,\infty), [0,\infty)\times \R)$ and the pathwise uniqueness of $(Z,Y)$ follow, which ends up the proof.
\end{proof}

\me {\bf Application to logistic Feller diffusion  in a Brownian environment.} The next  example illustrates the result. We consider a reproduction law which takes into account logistic competition and small fluctuations of the environment. 
\begin{cor}
Assume that  $(E^N)_{N}$ are centered random variables such that $(\sqrt{N}E^N)_{N}$ is uniformly bounded  
 and has variance $\sigma_{\e}^2$. We define
$L^N\in \{0,1,2\}$   for $N$ large enough, $n\in  \mathbb N$ and $e\in (-1,\infty)$ by
\be
\label{logB}
\mathbb P(L^N(n,e)=0)={1\over 2}(\sigma_{\D}^2-e+g_N(n/N)), \quad  \mathbb P(L^N(n,e)=2)={1\over 2}(\sigma_{\D}^2 +e-g_N(n/N)),
\ee
where    $\sigma_{\D}\in (0,\sqrt{2})$,
$g_N(z)=\alpha_{\D}/N +c(z/N)\wedge (1/\sqrt{N})$ for $z\geq 0$ and $c\geq 0$ and $\alpha_{\D} \in \R$.\\
Then  $(Z_t^N/N : t\in [0,\infty))$ converges in law in $\mathbb D([0,\infty), \R\times [0,\infty))$ to the unique strong solution $Z$  of
$$Z_t=Z_0+\alpha_{\D}\int_0^t Z_sds -c\int_0^tZ_s^2ds +\sigma_{\e}\int_0^t Z_sdB_s^{\e}+\sigma_{\D}\int_0^t\sqrt{Z_s}dB_s^{\D},$$
where $B^{\e}$ and $B^{\D}$ are two independent Brownian motions.
\end{cor}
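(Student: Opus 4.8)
The plan is to deduce the statement from Theorem~\ref{cascon}: it suffices to verify Assumptions~\textbf{A1} and~\textbf{A2} with $v_N=N$ and that every solution of the limiting equation~\eqref{example} is conservative, after which Theorem~\ref{cascon} yields the weak convergence of $\big(Z^N_{[N\cdot]}/N,\ S^N_{[N\cdot]}\big)$ in $\mathbb D([0,\infty),[0,\infty)\times\R)$ together with strong uniqueness; projecting on the population coordinate then gives the corollary. Assumption~\textbf{A1} is immediate: since $(\sqrt N\,E^N)_N$ is uniformly bounded and $E^N$ is centered, for $N$ large $|E^N|$ lies below the level where the truncation $h_\e$ equals the identity, so $N\,\E(h_\e(E^N))=N\,\E(E^N)=0=:\alpha_\e$, $N\,\E(h_\e^2(E^N))=\E\big((\sqrt N\,E^N)^2\big)=\sigma_\e^2=:\beta_\e$, and $N\,\E(f(E^N))=0$ eventually for every $f$ vanishing near $0$; hence $\nu_\e=0$, $\sigma_\e$ is the Gaussian coefficient, and the limiting environment in~\eqref{environnement} is the Brownian motion $Y_t=\sigma_\e B^\e_t$, for which $\gamma^\e_\theta=-\tfrac12\sigma_\e^2\theta^2$.

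The heart of the argument is Assumption~\textbf{A2}. With $n=[Nz]$, the fact that $L^N(n,e)-1\in\{-1,0,1\}$ together with~\eqref{logB} gives the elementary one–step identity $\E\big(e^{-\frac jN(L^N(n,e)-1)}\big)=\sigma_\D^2\cosh(j/N)+(1-\sigma_\D^2)+\big(g_N(n/N)-e\big)\sinh(j/N)$, hence
\[
\E\!\left(e^{-\frac jN(L^N(n,e)-1)}\right)=1+\frac jN\big(g_N(n/N)-e\big)+\frac{\sigma_\D^2 j^2}{2N^2}+O(N^{-3}),
\]
uniformly for $|e|\le C/\sqrt N$ and in $n$. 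Raising this to the power $[Nz]$, expanding the exponential, and then taking the expectation over $E^N$ using $\E(E^N)=0$ and $\E((E^N)^2)=\sigma_\e^2/N$, the prefactor $v_N=N$ in~\eqref{cond-A} turns the surviving $O(1/N)$ terms into a finite limit $A^N_{j,\ell}(z)\to A_{j,\ell}(z)$, an explicit polynomial in $z$ whose coefficients one reads off by matching with~\eqref{A2exemple}: this identifies $\nu_\D=0$, the quadratic branching coefficient $\sigma_\D$, the affine interaction function $g$ (which carries the logistic term in $c$ and satisfies~\eqref{hypG}), and the linear drift, while the $\sigma_\e^2$–terms reproduce $\gamma^\e_{jz+\ell}-\gamma^\e_\ell$ since $\gamma^\e_\theta=-\tfrac12\sigma_\e^2\theta^2$. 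The cutoff $\wedge\,1/\sqrt N$ in $g_N$ only alters $g_N(n/N)$ when $cn/N^2>1/\sqrt N$, i.e. for $z$ of order at least $\sqrt N$, a range annihilated by the weight $e^{-kz}$ in~\eqref{A2exemple}; since all limits are explicit polynomials, the convergence is uniform on compact $z$–sets and the exponential weight then delivers the uniform convergence on $[0,\infty)$ required by~\textbf{A2}. By Remark~\ref{remarques}(iii) it is enough to argue for $z\in\mathbb N/N$.

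It remains to check conservativeness, so that Theorem~\ref{cascon} applies and~\textbf{A3} is not needed. Since $\nu_\D=\nu_\e=0$, equation~\eqref{example} has no jumps and reduces to the SDE stated in the corollary, driven by the two independent Brownian motions $B^\e$ and $B^\D$; dropping the non-positive competition term $-cZ_t^2\,dt$, a standard comparison argument dominates $Z$ from above by the Feller diffusion in a Brownian environment solving $d\bar Z_t=\alpha_\D\bar Z_t\,dt+\sigma_\e\bar Z_t\,dB^\e_t+\sigma_\D\sqrt{\bar Z_t}\,dB^\D_t$. This process has linear branching mechanism $\psi(\lambda)=-\alpha_\D\lambda+\tfrac12\sigma_\D^2\lambda^2$, which satisfies Grey's condition $\int_{0+}d\lambda/|\psi(\lambda)|=+\infty$, and the Brownian environment preserves non-explosion (see~\cite{ZL} and the references in Section~\ref{CSBPLEI}); hence $T_{exp}=+\infty$ a.s. Theorem~\ref{cascon} then gives the weak convergence of $\big(Z^N_{[N\cdot]}/N,\ S^N_{[N\cdot]}\big)$ to the unique strong solution of~\eqref{environnement}--\eqref{example}, and specialising the coefficients identified above in~\eqref{example} yields exactly the announced equation for the population coordinate.

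The step I expect to be the main obstacle is the verification of~\textbf{A2}: one has to push the expansion of $\E\big(e^{-\frac jN(L^N-1)}\big)^{[Nz]}$ two orders beyond the constant while controlling the $O(N^{-3/2})$ remainder produced by the $\sqrt N$–scale of the environment, so that after multiplication by $v_N=N$ and integration against the law of $E^N$ the error is $o(1)$ \emph{uniformly} in $z\ge 0$ under the exponential weight, including the regime where the $\wedge\,1/\sqrt N$ truncation in $g_N$ is active. Verifying~\textbf{A1} and the conservativeness is comparatively routine, the latter by the domination argument above.
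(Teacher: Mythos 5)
Your proposal is correct and follows essentially the same route as the paper: verify {\bf A1} trivially (so $\alpha_\e=0$, $\nu_\e=0$, $\beta_\e=\sigma_\e^2$), verify {\bf A2} by expanding $\E\big(e^{-\frac{j}{N}(L^N-1)}\big)$ to order $N^{-2}$, raising to the power $[Nz]$, integrating against the law of $E^N$ and handling the large-$z$ regime (where the $\wedge\,1/\sqrt N$ cutoff is active) through the weight $e^{-kz}$ and a split at $z\sim\sqrt N$, and finally obtain conservativeness by comparison with the $c=0$ Feller diffusion in Brownian environment before invoking Theorem \ref{cascon}. The only cosmetic difference is that you justify non-explosion of the dominating process via Grey's condition and \cite{ZL}, whereas the paper cites the coupling with \cite{BH}; this does not change the argument.
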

\begin{proof}
 Assumption ${\bf A1}$ holds with $v_N=N$, $\alpha_{\e}=0$, $\nu_{\e}=0$ and $\beta_{\e}=\sigma_{\e}^2$. Let us now prove that
 {\bf A2} holds. \\
 First, from  \eqref{logB}, we get
\ben
\E\left(e^{- \frac{j}{N}(L^N(n,e)-1)}\right)&=& 1-\frac{j}{N}(e-g_N(n/N))+\frac{j^2}{2N^2}\sigma_{\D}^2+o(1/N^2),
\een
where $o(1/N^2)$ is uniform with respect to $z$ and $e$.
Then, for any $z\in \mathbb N/N$,
\ben
P_j^N(z,e)&=&\E\left(e^{- \frac{j}{N}(L^N(Nz,e)-1)}\right)^{Nz}-1
\ = \ e^{Nz\left(-\frac{j}{N}(e+g_N(z))+\frac{j^2}{2N^2}\sigma_{\D}^2+o(1/N^2)\right)}-1\\
&=& -jz(e-g_N(z))+\frac{j^2}{2}z^2e^2+ \frac{j^2z}{2N}\sigma_{\D}^2+o(e^{jz}/N)
\een
by considering the cases $z\leq \sqrt{N}$ and  $z\geq \sqrt{N}$. We obtain that
for any $1\leq j\leq k$ and $\ell\geq 0$,
\ben
e^{-kz}N\E\big(P^N_{j}(z,E^N) \, e^{-\ell E^N}\big)&=&e^{-kz} 
\bigg( \left(jzNg_N(z)+ \frac{j^2z}{2}\sigma_{\D}^2 \right)\mathbb E\left(e^{-\ell E^N}\right)\\
&&  \qquad -jzN \mathbb E\left(E^Ne^{-\ell E^N}\right) + \frac{j^2}{2}z^2N \mathbb E\left((E^N)^2e^{-\ell E^N}\right)  \bigg)+o(1).
\een
Finally, $\sqrt{N}E^N$ is centered, bounded with variance $1$, so $\mathbb E\left(e^{-\ell E^N}\right)\rightarrow 1$ and
$N\E(f(E^N))\rightarrow \sigma_{\e}^2f''(0)/2$ for $f\in C^{b,2}_0$ when $N$ tends to infinity. In particular,  
$$N \mathbb E(E^Ne^{-\ell E^N}) \rightarrow -\ell \sigma_{\e}^2, \qquad N \mathbb E((E^N)^2e^{-\ell E^N}) \rightarrow \sigma_{\e}^2.$$
Writing $g(z)=cz$ and using that  $\gamma_j^{\D}=j\alpha_{\D}-\frac{j^2}{2}\sigma_{\D}^2$ and $\gamma_v^{\e}= \sigma_{\e}^2v^2/2$, we get 
$$
\sup_{z\in \mathbb N/N} e^{-kz}\bigg\vert  \mathcal C_{j,\ell}^N(z)+ \gamma_{j z+\ell}^{\e}-\gamma_{\ell}^{\e}-j z\, g(z)+ \gamma_j^{\D} z\bigg\vert \stackrel{N\rightarrow \infty}{\longrightarrow}  0. 
$$
since $\gamma_{j z+\ell}^{\e}-\gamma_{\ell}^{\e}= \sigma_{\e}^2(zj\ell +z^2j^2/2)$.
We recall from Remark 4.2(iii) that this uniform convergence then holds for $z\geq 0$
and {\bf A2}  is satisfied.\\
Finally,  a coupling with the Feller diffusion in Brownian environment ($c=0$, studied in \cite{BH}) allows us to prove that  the process $Z$ is conservative. The result is then an application of Theorem \ref{cascon}. \end{proof}

\section{Perspectives and multidimensional population models}
\label{Perspect}
The general results of Section \ref{gene} have been applied in the two previous sections to  Wright-Fisher processes in a L\'evy environment and Galton-Watson processes with interaction in a L\'evy environment  with jumps larger than $-1$. These generalizations of historical  population models
 were our original motivation for this work. The results of Section \ref{gene}  can actually be applied in  other interesting contexts. We mention here some hints in these directions and works in progress. \\

 First, we could consider environments which are non independent and identically distributed or not restricted to $(-1,\infty)$.\\
 This restriction to $(-1,\infty)$  allowed   to consider a functional space generated by the functions $\exp(-k.)$ $(k\geq 0)$ which are bounded on $(-1,\infty)$. To extend the results to  random walks converging to L\'evy processes with a jump measure
 $\nu$ on $\R$ such that $\int_{\R} (1\wedge w^2)\nu_{\e}(dw)<\infty$, one could consider the functional space of compactly supported functions
  $$\mathcal H=\{ (x,w)\rightarrow e^{-kx}f(w) : k\geq 1, f\in C^{\infty}_c(\R)\}\cup \{ (x,w)\rightarrow f(w) : f\in C^{\infty}_c(\R), f(0)=0\}$$
  for studying Wright Fisher in a L\'evy environment and
 $$\mathcal H=\{  (u,w)\rightarrow u^kf(w) : k\geq 1, f\in C^{\infty}_c(\R)\}\cup \{  (u,w)\rightarrow f(w) :  f\in C^{\infty}_c(\R), f(0)=0\}$$
 for studying branching processes with interaction in random environment. Indeed these spaces satisfy {\bf (H1.1,2)}. This would require to  check that {\bf (H1.3)} holds.\\
Such functional spaces could also help to study cases when the environment $E^N_k$ depends on $S^N_k$ and $S^N$ converges to a diffusion with jumps.
  \\

Second,  as explained in the introduction, we are more generally interested  in $k$-type  population models, where the population at generation $n$ is described by a vector $$Z^N_n=(Z^{1,N}_n, Z^{2,N}_n,\ldots,   Z^{k,N}_n),$$
where  $Z_n^{i,N}$ counts the number of individuals of type $i$ in generation $n$. The following processes  have attracted lots of attention in population dynamics framework :
$$Z^{i,N}_{n+1}=\sum_{\alpha=1}^k\sum_{j=1}^{F_N^{(\alpha)}(Z^N_n)} L^{N,\alpha}_{i,j,n}(Z^N_n).$$
Such processes allow to model competition, prey-predators interactions, sexual reproduction, mutations ....
Some examples  have been well studied,  as multitype branching processes, controlled branching processes or bisexual Galton-Watson processes, see e.g. respectively  \cite{Mode},   \cite{GMD} and \cite{Alssurvey}.\\
One way to obtain the scaling limits is to consider the compactified proces
$$X^N=\left(\exp(-Z^{1,N}_n), \exp(-Z^{2,N}_n),\ldots,  \exp(- Z^{k,N}_n)\right)$$
and to use the functional space
$$\mathcal H=\big\{ (u_1, \ldots, u_k) \rightarrow u^{i_1}\times u^{i_k} : (i_1, \ldots, i_k) \in \mathbb N^k\setminus (0,\ldots 0)\big\}.$$
Indeed $\mathcal H$ satisfies Assumption {\bf (H1.1,2)} and the exponential transformation combined with this functional space may allow to exploit the independence structure of the model as for extended branching processes in Section \ref{CSBPLEI}. Some work will then be  required to check that Assumption {\bf (H1.3)} holds. Moreover uniqueness can be delicate.  In a work in progress, we consider bisexual Galton-Watson processes and their scaling limits to bisexual CSBPs under general conditions. It is also worth noticing that in the scaling limits, the nonlinearity or the environment can  impact the diffusion or jump terms, and not only the drift as for BPILE considered in Section \ref{CSBPLEI}. One could also prove limits to CSBP with L\'evy environment, where the jump measure associated with the demographical stochasticity (large jumps coming from the offsprings of one single individual, at a rate proportional to the number of individuals)
is impacted by the environment, see \cite{BS}, \cite{Li} for an example. 
 
Note also that we observe that one may want to go beyond the boundedness assumptions on the characteristics ${\cal G}^N$. This seems to be a challenging question but the approach developed here may be extendable. Indeed,
we obtain the boundedness assumptions in Section \ref{CSBPLEI} by a compactification of the state space using the function $z\rightarrow \exp(-z)$, which allows to consider explosive processes. 

The last point to mention is that our criteria concern  semimartingales in general. The Markov setting allows us to simplify the form of the characteristics ${\cal G}^N$ and to reduce the problem to analytical approximations,  nevertheless we could try to work with non  Markovian processes with similar techniques.

\section{Appendix}

\subsection{General construction of a discrete   random variable satisfying   {\bf A2}}
\label{exemplee}
We first consider the case $\sigma_{\D}=0$ and assume 
$E^N \in (-1+1/\sqrt{N}, \infty)$ for simplicity. We also introduce $g_N$ which converges to $g$ 
and such that \be
\label{cvv}
e^{-z}z g(z)\stackrel{z\rightarrow \infty}{\longrightarrow } 0, \qquad \sup_{z\geq 0} e^{-z}z\vert g_N(z)-g(z)\vert\stackrel{N\rightarrow \infty}{\longrightarrow } 0, \qquad   \sup_{z\geq 0} \frac{\vert g_N(z)\vert}{N^{1/3}}<\infty.
\ee
One can take for instance $g_N(.)=g(.)\wedge N^{1/3}$.
Let us define
$$m_N(n,e)=1+g_N(n/N)/N+\alpha_{\D}/N+e$$
and
 observe that $m_N(n,E^N)$ is a.s. positive for $N$ large enough. We  consider the reproduction random variable 
$A^N(n,e)\in \{[m_N(n,e)],[m_N(n,e)]+1\}$ defined by $\mathbb E(A^N(n,e))=m_N(n,e)$, i.e.
$$\mathbb P(A^N(n,e)=[m_N(n,e)])=p_N(n,e), \quad \mathbb P(A^N(n,e)=[m_N(n,e)]+1)=1-p_N(n,e), $$
with $p_N(n,e)=[m_N(n,e)]+1 -m_N(n,e)$.
For the large reproductions events, we also introduce $\Sigma^N \in \mathbb N$ independent of $(A^N(n,e) : n\geq 0, e\in (-1,\infty) )$ such that
$$
  \lim_{N\to \infty }  N^2 \,\E(h_{\D}(\Sigma^N
))= 0; \quad 
  \lim_{N\to \infty }  N^2 \, \E(h^2_{\D}(\Sigma^N
))=0; \quad
  \lim_{N\to \infty }    N^2 \, \E(f(\Sigma^N
))  =  \int_{0}^{\infty} f(v) \nu_{\D}(dv)$$
for $f$ continuous bounded and 
vanishing in a neighborhood of $0$.
The reproduction random variable $L^N$ is then defined by
$$L^N(n,e)=A^N(n,e)+N\Sigma^N $$
for $n\in \mathbb N$ and $e>-1$ and writing $\kappa_j^N=-\log(1-\E(f_j(\Sigma^N))$, we have for $z\in \mathbb N/N$,
\ben
&&\mathbb E\left(e^{- \frac{j}{N}(L^N(Nz,e)-1)}\right)\\
&&\qquad =\mathbb E\left(e^{- {j}{\Sigma^N}}\right) \mathbb E\left(e^{- \frac{j}{N}(A^N(Nz,e)-1)}\right)\\
&&\qquad =e^{-\kappa_j^N-je/N} \left( p(Nz,e)e^{-j([m_N(n,e)]-1-e)/N}+(1-p(Nz,e))e^{-j([m_N(n,e)]-e)/N}\right)\\
&&\qquad =e^{-\kappa_j^N-je/N} \left(1-\frac{j}{N}\left(m_N(Nz,e)-e-1\right)+\frac{\phi_N(z,e)}{N^2}\right)
\een
where $\phi_N$ is  bounded. By Taylor expansion, we obtain
\be
\big\vert \phi_N(z,e) \big\vert &\leq & c.\bigg(\left([m_N(Nz,e)]-e-g_N(z)/N-\alpha/N)(1-2([m_N(Nz,e)]-e)\right)\nonumber \\
&&\qquad \qquad \qquad  \qquad\qquad +([m_N(Nz,e)]-e)^2\bigg). \label{inegphiN}
\ee
Moreover $m_N(Nz,e)-e-1=\mathcal O(N^{-2/3})$ uniformly for $z,e$ and we obtain
\ben
P^N_j(z,e)&=&e^{-zN\kappa_j^N-jez} \left(1-\frac{j}{N}\left(m_N(Nz,e)-e-1\right)+\frac{\phi_N(z,e)}{N^2}\right)^{Nz}-1\\
&=&e^{-zN\kappa_j^N -jez -jz\left(m_N(Nz,e)-e-1\right)+z\psi_N(z,e)}-1\\
&=&e^{-zN\kappa_j^N-zj(g_N(z)+\alpha_{\D})/N}.e^{-jze-z\psi_N(z,e)}-1,
\een
where $N\psi_N(z,e)$ is continuous bounded and $N\vert \psi_N(z,e)\vert\leq c(1/N^{4/3}+\vert  \phi_N(z,e) \vert)$ and $c$ is a constant which may change from line to line.
Thus
$$
\mathbb E\big(P^N_{j}(z,E^N) \, e^{-\ell E^N}\big)=A_1^N(z)+A_2^N(z) + A_1^N(z)A_2^N(z)+A_3^N,$$
where
$$A_1^N(z)=e^{-zN\kappa_j^N-zj(g_N(z)+\alpha_{\D})/N}-1, \quad A_2^N(z)=\E\left(e^{-(jz+\ell)E^N-z\psi_N(z,E^N)}\right)-1$$
and  $A_3^N=\E(f_{\ell}(E^N))$. Assumption ${\bf A1}$ ensures that 
$v_NA_3^N$ converges to $\gamma_{\ell}^{\e}$ when $N$ tends to infinity (see \eqref{conEnv}  for details).  To conclude and prove \eqref{A2exemple}, we prove and combine the asymptotic results stated below.

\begin{lem} For any $j\geq 1$,
\ben
(i) &&\sup_{z\geq 0} e^{-jz}\big\vert NA_1^N(z)+z(\gamma_j^{\D}+\alpha_{\D} +g(z))\big\vert \stackrel{N\rightarrow \infty}{\longrightarrow}0\\
(ii) && \sup_{z\geq 0} e^{-jz}\big\vert NA_2^N(z)+\gamma_{jz+\ell}^{\e} \big\vert \stackrel{N\rightarrow \infty}{\longrightarrow} 0\\
(iii) && \sup_{z\geq 0} e^{-jz}\big\vert NA_1^N(z)A_2^N(z)\big\vert \stackrel{N\rightarrow \infty}{\longrightarrow} 0
\een
\end{lem}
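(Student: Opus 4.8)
The plan is to prove the three displays one after another; in each case the strategy is the same, namely a first-order expansion isolating the leading term, followed by a control of the remainder that is \emph{uniform} in $z\ge0$ thanks to the weight $e^{-jz}$ and to the growth bounds already at our disposal: \eqref{cvv} for $g_N$, \eqref{inegphiN} for $\phi_N$ (hence for $\psi_N$), Assumption {\bf A1} and Lemma \ref{CV-EN} for the $E^N$-expectations, and the two scaling restrictions built into the construction, $\sup_z|g_N(z)|=O(N^{1/3})$ and $E^N>-1+1/\sqrt N$. As a preliminary I would first check that $N^2\kappa_j^N$ converges to a finite constant: writing $\kappa_j^N=-\log(1-\E(f_j(\Sigma^N)))=\E(f_j(\Sigma^N))+O(\E(f_j(\Sigma^N))^2)$ and $f_j(r)=jh_{\D}(r)-\tfrac{j^2}2h_{\D}^2(r)+\rho_j(r)$ with $\rho_j$ continuous, bounded and $o(r^2)$, the three hypotheses on $\Sigma^N$ give this, the last one being applied to $\rho_j$ after a continuous cut-off near $0$ whose error is dominated by $N^2\E(h_{\D}^2(\Sigma^N))\to0$. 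In the same way, \eqref{inegphiN} together with $m_N(Nz,e)-e-1=O(N^{-2/3})$ gives $\sup_{z,e}|\phi_N(z,e)|=O(N^{-2/3})$, hence $\eta_N:=\sup_{z,e}|\psi_N(z,e)|=O(N^{-5/3})$ and $N\eta_N\to0$.

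For (i), write $A_1^N(z)=e^{-\lambda_N(z)}-1$ with $\lambda_N(z)=zN\kappa_j^N+zj(g_N(z)+\alpha_{\D})/N$, so $N\lambda_N(z)=zN^2\kappa_j^N+zjg_N(z)+zj\alpha_{\D}$, and use $|e^{-\lambda}-1+\lambda|\le\tfrac{\lambda^2}2e^{|\lambda|}$. On the one hand $e^{-jz}N\lambda_N(z)$ converges uniformly on $[0,\infty)$: the summands $zN^2\kappa_j^N$ and $zj\alpha_{\D}$ are controlled by $\sup_z ze^{-jz}<\infty$ and the convergence of $N^2\kappa_j^N$, and $e^{-jz}zj|g_N(z)-g(z)|\le e^{-z}z|g_N(z)-g(z)|\to0$ by \eqref{cvv} (here $j\ge1$). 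On the other hand, with $|\lambda_N(z)|\le c_Nz$, $c_N\to0$, one has $N\lambda_N(z)^2=\tfrac1N(N\lambda_N(z))^2\le CN^{-1}z^2(1+N^{1/3})^2$ by $g_N=O(N^{1/3})$, so $e^{-jz}N\lambda_N(z)^2e^{|\lambda_N(z)|}\le CN^{-1/3}$ for $N$ large since $\sup_z z^2e^{-(j-c_N)z}<\infty$. Combining the two proves (i).

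For (ii), I would write $A_2^N(z)=-\E(1-e^{-(jz+\ell)E^N-z\psi_N(z,E^N)})$ and decompose $1-e^{-(jz+\ell)E^N-z\psi_N}=f_{jz+\ell}(E^N)+e^{-(jz+\ell)E^N}(1-e^{-z\psi_N(z,E^N)})$. The first piece gives $-N\E(f_{jz+\ell}(E^N))$, and I claim $e^{-jz}N\E(f_{jz+\ell}(E^N))\to e^{-jz}\gamma^{\e}_{jz+\ell}$ uniformly in $z\ge0$; this is the proof of Lemma \ref{CV-EN} carried out with the weight kept throughout: decompose $f_{jz+\ell}=(jz+\ell)h_{\e}-\tfrac12(jz+\ell)^2h_{\e}^2+\kappa_{jz+\ell}$, use Assumption {\bf A1} for the first two terms (the polynomials in $z$ absorbed by $e^{-jz}$) and an Ascoli argument for $N\E(\kappa_{jz+\ell}(E^N))$, the mass of $E^N$ close to $-1$ being handled by $E^N>-1+1/\sqrt N$ and $\nu_{\e}(-1,-1+\varepsilon)\to0$. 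The second piece is a true remainder: $|1-e^{-z\psi_N(z,E^N)}|\le z|\psi_N(z,E^N)|\,e^{z\eta_N}$ and $e^{-jz}e^{-(jz+\ell)E^N}\le e^{\ell}$ because $E^N>-1$, so its $e^{-jz}$-weighted contribution is $\le e^{\ell}\,z\,N\eta_N\,e^{z\eta_N}$, which, since $N\eta_N\to0$, goes to $0$ uniformly on any bounded range of $z$, the super-exponential smallness of $e^{-jz}$ taking care of large $z$. This yields (ii).

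Finally, (iii) is obtained by combining (i) and (ii) with a splitting of the range of $z$ at, say, $z=\delta N^{2/3}$. From (i) one has $|A_1^N(z)|\le1$ everywhere and $|A_1^N(z)|\le|\lambda_N(z)|\,e^{|\lambda_N(z)|}=O(zN^{-2/3})$ on $\{z\le\delta N^{2/3}\}$ (using $g_N=O(N^{1/3})$), while from (ii) $M_2:=\sup_{N,z}e^{-jz}|NA_2^N(z)|<\infty$, hence $e^{-jz}|A_2^N(z)|\le M_2/N$. On $\{z\le\delta N^{2/3}\}$, $e^{-jz}|NA_1^N(z)A_2^N(z)|=|NA_1^N(z)|\cdot e^{-jz}|A_2^N(z)|\le C\delta N\cdot M_2/N=C\delta M_2$, as small as wanted; on $\{z>\delta N^{2/3}\}$, $e^{-jz}\le e^{-j\delta N^{2/3}}$ is super-exponentially small in $N$ and dominates the at-most-exponential bounds $|NA_1^N(z)|\le N$ and $|A_2^N(z)|\le Ce^{jz+\ell}e^{z\eta_N}$ (again from $E^N>-1$ and the smallness of $\psi_N$). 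Letting $N\to\infty$ and then $\delta\to0$ gives (iii). I expect the main obstacle to be exactly this uniform control for an \emph{unbounded} parameter: in (ii) the functions $w\mapsto f_{jz+\ell}(w)$ are not uniformly bounded on $(-1,\infty)$, so Lemma \ref{CV-EN} cannot be quoted as it stands and its Ascoli step has to be redone with the weight in place and with the restriction $E^N>-1+1/\sqrt N$ used to tame the negative environment; and the large-$z$ balance in (iii) is precisely the point where the two scaling choices $g_N=O(N^{1/3})$ and $E^N>-1+1/\sqrt N$ of the construction are genuinely needed.
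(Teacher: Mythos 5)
Your part (i) is essentially sound (a global Taylor bound with the weight $e^{-jz}$, instead of the paper's split of the range at $z=N^{2/3}$), but the proof of (ii) rests on a false estimate, and (iii) inherits the problem. You claim that \eqref{inegphiN} together with $m_N(Nz,e)-e-1=O(N^{-2/3})$ gives $\sup_{z,e}|\phi_N(z,e)|=O(N^{-2/3})$, hence $\eta_N:=\sup_{z,e}|\psi_N(z,e)|=O(N^{-5/3})$ and $N\eta_N\to 0$. This is not true: $\phi_N$ is the second--order term in the expansion of $\E\big(e^{-\frac jN (A^N(Nz,e)-1-e)}\big)$, and up to $O(N^{-1})$ it equals $\frac{j^2}{2}\big(\Var(A^N(Nz,e))+(m_N-1-e)^2\big)=\frac{j^2}{2}\,p_N(Nz,e)(1-p_N(Nz,e))+O(N^{-1})$, which for instance for $e=1/2$ is about $j^2/8$ uniformly in $N$. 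The bound \eqref{inegphiN} only shows that $\phi_N$ is bounded and becomes small as $e\to 0$; accordingly the paper only has $N|\psi_N(z,e)|\le c\,(N^{-4/3}+|\phi_N(z,e)|)$, i.e. $N\sup_{z,e}|\psi_N|=O(1)$, not $o(1)$. Hence your remainder bound in (ii), $e^{\ell}\,z\,N\eta_N\,e^{z\eta_N}$, does not tend to $0$. (Even if $N\eta_N\to0$ held, the estimate would not be uniform in $z$: the weight $e^{-jz}$ has already been spent in $e^{-jz}e^{-(jz+\ell)E^N}\le e^{\ell}$, so nothing is left to control the factor $z$; retaining decay via $E^N>-1+1/\sqrt N$ only buys a factor $\sqrt N$, which would require $N^{3/2}\eta_N\to0$.) The correct mechanism, as in the paper, is probabilistic rather than a deterministic sup bound: on $z\in[0,A]$ one uses that $\phi_N$ is bounded, that $\phi_N(z,e)\to0$ as $e\to0$ uniformly in $z$, and that $E^N\to0$ in probability, so $\sup_{z\le A}\E|\phi_N(z,E^N)|\to0$; for large $z$ the term $R_N(z,e)=e^{-(jz+\ell)e}\big(e^{-z\psi_N(z,e)}-1\big)$ must be treated separately, e.g. via $e^{-jz}N|R_N(z,e)|\un_{e\ge-1+\varepsilon}\le N e^{-(\varepsilon-1/\sqrt N)jz}\,|e^{cz/N}-1|$ combined with $\nu_{\e}(-1,-1+\varepsilon)\to0$.

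This gap propagates to (iii): there you invoke $M_2=\sup_{N,z}e^{-jz}|NA_2^N(z)|<\infty$ and the bound $|A_2^N(z)|\le Ce^{jz+\ell}e^{z\eta_N}$, both of which rest on the $\eta_N$ claim and on (ii); moreover the sentence that $e^{-jz}\le e^{-j\delta N^{2/3}}$ ``dominates'' $|A_2^N|\le Ce^{jz+\ell}$ is not literally correct, since $e^{-jz}e^{jz+\ell}=e^{\ell}$ leaves no smallness at all — the margin $E^N>-1+1/\sqrt N$ must be used quantitatively (the paper does this by splitting the weight as $e^{-jz}=e^{-jz/(2\sqrt N)}e^{-jz(1-1/(2\sqrt N))}$ and using a variant of Lemma \ref{chiant}); you name this ingredient at the end, but it is not actually deployed in the estimate. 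Finally, the uniform-in-$z$ convergence $e^{-jz}N\E(f_{jz+\ell}(E^N))\to e^{-jz}\gamma^{\e}_{jz+\ell}$ is only asserted through an Ascoli-type sketch; in the paper it is obtained on compacts from convexity in $z$ and Dini's theorem, and for large $z$ from Lemma \ref{chiant} together with the boundedness of $N\E\big((1-e^{-E^N})^2\big)$ — this part still has to be written out before (ii), and hence (iii), can be considered proved.
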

\begin{proof}
(i)  First,  by Taylor expansion and using   that $g_N(z)/N^{1/3}$ is bounded, there exists $c>0$ such that  for any  $z\leq N^{2/3}$,
\ben
&&e^{-jz}\big\vert Ne^{-zN\kappa_j^N-zj(g_N(z)+\alpha_{\D})/N} +z(\gamma_j^{\D}+\alpha_{\D} +g(z)) \big\vert   \\
&&\qquad \qquad \leq c.e^{-jz}z\left(\vert N^2\kappa_j^{N}-\gamma_j^{\D}\vert+\vert g_N(z)-g(z)\vert+N^{-1/3}\right).
\een
The right hand side  goes to $0$ uniformly as $N\rightarrow \infty$. Second,
$$ e^{-jz}z(\gamma_j^{\D}+\alpha_{\D} +g(z)) \stackrel{z\rightarrow \infty}{\longrightarrow} 0, \quad \sup_{z\geq N^{2/3}, N\geq 1} e^{-jz} \big\vert N  e^{-zN\kappa_j^N-zj(g_N(z)+\alpha_{\D})/N} -1 \big\vert \stackrel{A\rightarrow \infty}{\longrightarrow} 0,$$
since for  $z\geq N^{2/3}$,
$$e^{-jz}\big\vert Ne^{-zN\kappa_j^N-zj(g_N(z)+\alpha_{\D})/N} -1 \big\vert \leq e^{-jz}N(e^{zc/N^{2/3}}+1)\leq Ne^{-N^{1/3}.(1-c/N^{2/3})}+Ne^{-N^{1/3}},$$
which goes to $0$. This proves $(i)$. 

Let us turn to $(ii)$. We first prove the uniform convergence on compact sets using convexity and simple convergence.
Indeed, recalling that  $\vert N\psi_N(z,e) \vert \leq c(1/N^{4/3}+\vert  \phi_N(z,e) \vert)$,
\ben
\big\vert A_2^N(z)+\E\left(f_{jz+\ell}(E^N)\right) \big\vert \leq \frac{c}{N}\left(1/N^{4/3}+ \E( \vert  \phi_N(z,E^N) \vert ) \right)
\een
for $z\in [0,A]$. By  {\bf A1}, $E^N$ goes in probability to $0$  and $\phi_N$ is bounded and $\phi_N(z,e)\rightarrow 0 $  as $e\rightarrow 0$ uniformly with respect to $z\in [0,A]$ from \eqref{inegphiN}. It turns out  that $\sup_{z\in [0,A]} \E( \vert  \phi_N(z,E^N) \vert )\rightarrow 0$ and
$$\sup_{z\in [0,A]} \big\vert NA_2^N(z)+N\E\left(f_{jz+\ell}(E^N)\right) \big\vert  \stackrel{N\rightarrow \infty}{\longrightarrow} 0.$$
Moreover, for any $z\geq 0$, $N\,\E\left(f_{jz+\ell}(E^N)\right)\rightarrow \gamma_{jz+\ell}^{\e}$ by Assumption {\bf A1} (see  again \eqref{conEnv}  for details) and the convergence is uniform on $[0,A]$ by convexity  of
$z\rightarrow N\E\left(f_{jz+\ell}(E^N)\right)$ and by continuity of $z\rightarrow \gamma_{jz+\ell}^{\e}$ (third Dini's theorem). 
It proves $(ii)$ on compacts sets. Let us now prove that  $\sup_{z\geq A,N\geq 1} \exp(-jz)\big\vert NA_2^N(z)\vert \rightarrow 0$ as $A\rightarrow \infty$.\\
Let us fix $\varepsilon>0$ and
\be
\label{deccompp}
A_2^N(z)=B_{\varepsilon}^N(z)+C_{\varepsilon}^N(z)
\ee
where 
$$B_{\varepsilon}^N(z)= \E\left(1_{E^N\geq -1+\varepsilon} e^{-(jz+\ell)E^N-z\psi_N(z,E^N)}\right)-1.$$
Recalling that $E^N\geq -1+1/\sqrt{N}$ and $N\psi_N$ bounded, we have 
$$
C_{\varepsilon}^N(z)= \E\left(1_{E^N<-1+\varepsilon} e^{-(jz+\ell)E^N-z\psi_N(z,E^N)}\right)
\leq   \mathbb P(E^N<-1+\varepsilon) e^{-(jz+\ell)(1-1/\sqrt{N}+c/N)}.$$
Thus,  the last part of Assumption {\bf A1} ensures that
$$ \lim_{\varepsilon\rightarrow 0} \sup_{N\geq c^2, z\geq 0} e^{-jz}NC_{\varepsilon}^N(z)=\lim_{\varepsilon\rightarrow0} \sup_{N\geq 1}  N\mathbb P(E^N<-1+\varepsilon) = \lim_{\varepsilon\rightarrow 0} 
\nu_{\e}(-1,-1+\varepsilon)=0.$$
Writing 
$$g_x(y)=f_{x}(y)-xh_E(y)= 1-e^{-xy}-xh_E(y), \quad R_N(z,e)=e^{-(jz+\ell)e}\left(e^{-z\psi_N(z,e)}-1\right),$$
we have 
\ben
B_{\varepsilon}^N(z)&=&\mathbb P(E^N<-1+\varepsilon)-(jz+\ell)\E\left(h_{\e}(E^N)1_{E^N \geq -1+\varepsilon}\right)-\E(g_{jz+\ell}(E^N))1_{E^N \geq -1+\varepsilon})\\
&&+\E\left(R_N(z,E^N)1_{E^N \geq -1+\varepsilon}\right).
\een
First, we recall that $\sup_N\mathbb P(E^N<-1+\varepsilon)\rightarrow 0$
as $\varepsilon\rightarrow 0$ and
 $N\E\left(h_{\e}(E^N)1_{E^N \geq -1+\varepsilon}\right)$ is bounded (actually convergent by Assumption {\bf A1}). Second, we prove that
$$\sup_{z\geq A, N\geq 1}N  e^{-jz}\E\left(\vert g_{jz+\ell}(E^N)\vert 1_{E^N\geq -1+\varepsilon}\right)\stackrel{A\rightarrow\infty}{\longrightarrow} 0$$
using that (see forthcoming Lemma \ref{chiant} for details)
$$\sup_{y\geq  -1+\varepsilon, \ y \ne 0} \frac{e^{-jz}}{(1-e^{-y})^2} \vert g_x(y) \vert  \stackrel{x\rightarrow\infty}{\longrightarrow} 0$$
and that $N\E((1-\exp(-E^N))^2)$ is bounded from {\bf A1}. Finally 
$$e^{-jz}N\vert R_N(z,e)1_{e\geq -1+\varepsilon} \vert \leq N\exp(-(\varepsilon -1/\sqrt{N})jz).\big\vert \exp(cz/N)-1\big\vert$$
ensures that $$\sup_{z\geq A,N} N\E\left(R_N(z,E^N)1_{E^N \geq -1+\varepsilon}\right)\stackrel{A\rightarrow\infty}{\longrightarrow} 0,$$ 
using again $\vert \exp(cz/N)-1\vert\leq c' z/N$ for $z\leq N$, while the  right hand is bounded by $z\exp(-\varepsilon jz/2)$ for $z\geq N$ and $N\geq 16/\varepsilon^2$ . Combing these estimates in  \eqref{deccompp} yields \\ 
$\sup_{z\geq A,N\geq 1} \exp(-jz)\big\vert NA_2^N(z)\vert \rightarrow 0$ as $A\rightarrow \infty$ and ends the proof of $(ii)$ by recalling that $\exp(-jz)\gamma^{\e}_{jz+\ell}\rightarrow 0$ as $z\rightarrow \infty$. \\

We finally prove $(iii)$. First,  
$$\sup_{z\geq 0} e^{-jz/2\sqrt N }\vert A_1^N(z) \vert \stackrel{A\rightarrow\infty}{\longrightarrow} 0$$ using
that $\exp(-jz/2\sqrt N )\vert A_1^N(z)\vert \leq c\exp(-jz/2\sqrt N)z/N^{2/3} $ for $z\leq N$ (and then one may use for $N$ large that 
 $z/N^{2/3}$ is small for  $z\leq N^{7/12}$  and that  $\exp(-z/2\sqrt N)$ is small for $N^{7/12}\leq z\leq N$) and 
 $\exp(-jz/2\sqrt N )\vert A_1^N(z)\vert\leq  \exp(-jz(1/2\sqrt N-c/N^{2/3}))$ for $z\geq N$. Second
 $$\sup_{N\geq 1,z\in [0,\infty)} e^{-jz(1-1/2\sqrt N)}N \E(A_2^N)<\infty,$$
 since
 $$ \E(A_2^N)\leq \mathbb P(E^N\leq -1/2)e^{(jz+\ell)(1-1/\sqrt{N})+zc/N}+ \E(A_2^N1_{E^N>-1/2}) $$
and $N\mathbb P(E^N\leq -1/2)$ is bounded from {\bf A1} and $N\E(A_2^N1_{E^N>-1/2})$ is bounded, following the point $(ii)$ and using the following slight modification of Lemma \ref{chiant}
$$\sup_{y> -1/2, \ y \ne 0, \ N\geq N_0} \frac{e^{-x(1-2/\sqrt{N})}}{(1-e^{-y})^2} \vert g_x(y) \vert  \stackrel{x\rightarrow\infty}{\longrightarrow} 0,$$
where $N_0$ is chosen such that $1-2/\sqrt{N_0}>1/2$.
\end{proof}

\subsection{Taylor expansion for a Galton-Watson process with cooperation}
\label{GWcoop}
Recalling \eqref{expcoop} and \eqref{defgam},
\ben
C_{j}^N(z)&=&v_N\left(\left(1-\gamma_j^{N,D}/Nv_N\right)^{Nz}\left( 1 -\frac{j}{N}\frac{g(z)\wedge v_N}{v_N} +\mathcal O\left(\frac{g(z)\wedge v_N}{N^2v_N}\right)\right)^{Nz}-1\right)\\
&=&v_N\left(e^{-z\gamma_j^{N,D}/v_N+\mathcal O(z/Nv_N^2)-jz(1\wedge (g(z)/v_N))(1+\mathcal O(1/N))}-1\right)
\een
for any $z\in \mathbb N/N$.
For $z$ such that $z+g(z)z\leq v_N$, we have $z/v_N\leq 1$ and $1\wedge (g(z)/v_N)=g(z)/v_N$ for $z\geq 1$. We make a Taylor expansion and get
$$e^{-jz}\big\vert C_{j}^N(z)+ \, z\gamma_j^{N,D}+ jzg(z)\big\vert\leq e^{-jz}c \left(\frac{1}{N}+\frac{zg(z)}{N} \right)
  $$
for some constant $c>0$. We obtain 
$$\sup_{z+g(z)z\leq v_N} e^{-jz}\big\vert  C_{j}^N(z)+ \, z\gamma_j^{\D}+ jzg(z)\big\vert \stackrel{N\rightarrow \infty}{\longrightarrow} 0.$$
To conclude, we observe that  $\min\{ z : z+g(z)z\geq v_N\}\rightarrow \infty$ as $N\rightarrow \infty$. Then   $\sup_{z+g(z)z\geq v_N} e^{-jz}\big\vert z\gamma_j^{N,D}+ jzg(z) \big\vert\rightarrow \infty$. Let us now prove that
$$\sup_{z+g(z)z\geq v_N} e^{-jz}v_N\big\vert  C_{j}^N(z)\big\vert \stackrel{N\rightarrow \infty}{\longrightarrow} 0.$$
Indeed, $g\geq 0$ and either $z\geq v_N/2$ and
$$  e^{-jz}v_N\big\vert  C_{j}^N(z) \vert \leq e^{-jz}v_Ne^{zc/v_N}\leq 2ze^{-zj/2}$$
for $N$ such that $j-c/v_N\geq j/2$ or $z\leq v_N/2$  and $v_N\leq 2zg(z)$ and there exists $c>0$ such that
$$  e^{-jz}v_N\big\vert  C_{j}^N(z) \vert \leq   e^{-jz} 2zg(z)\, e^{c}.$$
Recalling that $\min\{ z; z+g(z)z\geq v_N\}\rightarrow \infty$ as $N\rightarrow \infty$ and that  $zg(z)\exp(-z)\rightarrow 0$ as $z\rightarrow \infty$, we obtain  the desired result.

\subsection{Some technical results}
\label{technique}
\begin{lem} \label{chiant} For $x > 0$, let us consider
$$g_x(y)=1-e^{-xy}-xh_E(y).$$ 
We have 
$$\sup_{y> -1+\varepsilon, \ y \ne 0} \frac{e^{-x}}{(1-e^{-y})^2} \vert g_x(y) \vert  \stackrel{x\rightarrow\infty}{\longrightarrow} 0.$$
\end{lem}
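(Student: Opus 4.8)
The plan is to bound $|g_x(y)|$ separately according to the size of $y$, exploiting that $g_x(y) = 1 - e^{-xy} - x h_E(y)$ and that $h_E$ is a truncation function, so $h_E(y) = y$ on a neighborhood $(-\eta,\eta)$ of $0$ for some $\eta > 0$, while $h_E$ is continuous and bounded on all of $(-1,\infty)$. First I would fix $\varepsilon \in (0,1)$ and work on $\{y > -1+\varepsilon,\ y \ne 0\}$, keeping in mind that on this set $1 - e^{-y}$ is bounded away from $0$ whenever $|y|$ is bounded away from $0$, so the denominator $(1-e^{-y})^2$ only causes trouble near $y = 0$, where however $1 - e^{-y} \sim y$.

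For $|y| \ge \eta$ (and $y > -1+\varepsilon$): here $(1-e^{-y})^2$ is bounded below by a positive constant $c(\varepsilon,\eta)$, and $|g_x(y)| \le 1 + |e^{-xy}| + x\|h_E\|_\infty$. The only danger is $y < 0$, where $e^{-xy} = e^{x|y|}$ blows up; but then the prefactor $e^{-x}$ must beat it. Since $-1+\varepsilon \le y < 0$ forces $|y| \le 1-\varepsilon < 1$, we get $e^{-x} e^{-xy} = e^{-x(1+y)} = e^{-x(1-|y|)} \le e^{-x\varepsilon} \to 0$, and likewise $e^{-x} \cdot x\|h_E\|_\infty \to 0$ and $e^{-x}\to 0$; for $y \ge \eta > 0$ the term $e^{-xy} \le 1$ and everything is dominated by $e^{-x}(1 + x\|h_E\|_\infty) \to 0$. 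So $\sup_{|y|\ge\eta,\, y>-1+\varepsilon} e^{-x}|g_x(y)|/(1-e^{-y})^2 \to 0$.

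For $0 < |y| < \eta$: here $h_E(y) = y$, so $g_x(y) = 1 - e^{-xy} - xy$. I would use the elementary bound $|1 - e^{-t} - t| \le \tfrac12 t^2 e^{|t|}$ valid for all real $t$, with $t = xy$, giving $|g_x(y)| \le \tfrac12 x^2 y^2 e^{x|y|}$. Combined with $(1-e^{-y})^2 \ge c_0 y^2$ for $|y| < \eta$ (some $c_0 > 0$), we get
\[
\frac{e^{-x}\,|g_x(y)|}{(1-e^{-y})^2} \le \frac{1}{2c_0}\, x^2 e^{-x} e^{x|y|} \le \frac{1}{2c_0}\, x^2 e^{-x(1-\eta)},
\]
and choosing $\eta < 1$ (shrinking $\eta$ if necessary, which is harmless since it only restricts this regime further and enlarges the previous one) makes the right-hand side $\to 0$ as $x \to \infty$, uniformly in $y$. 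Combining the two regimes yields the claim. The main obstacle — really the only subtle point — is making sure the exponential prefactor $e^{-x}$ genuinely dominates the possible growth $e^{x|y|}$ coming from negative $y$, which is exactly where the hypothesis $y > -1+\varepsilon$ (forcing $|y| \le 1-\varepsilon$) is used, both in the $|y|\ge\eta$ range and, via the choice $\eta<1$, in the near-zero range.
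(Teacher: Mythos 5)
Your proof is correct and follows essentially the same route as the paper: split at the neighbourhood where $h_{\e}(y)=y$, use boundedness of $h_{\e}$ and of $1/(1-e^{-y})$ away from $0$ together with the constraint $y>-1+\varepsilon$ to let $e^{-x}$ absorb $e^{-xy}$, and use a Taylor-type bound near $0$. The only (harmless) difference is that near $0$ you use the uniform estimate $|1-e^{-t}-t|\le \tfrac12 t^2 e^{|t|}$ with $\eta<1$, whereas the paper treats the cases $|xy|\le 1$ and $|xy|\ge 1$ separately.
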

\begin{proof}
Let $\mathcal V_0$ be an open finite interval containing $0$ such that
$h_{\e}(y)=y$ for $y\in \mathcal V_0$. There exists $C>0$ such that for any $y \not\in \mathcal V_0$, 
$$\frac{\vert g_x(y)\vert }{(1-e^{-y})^2} \leq C\,(1+x+e^{x(1-\varepsilon)})$$
since $h_{\e}$ and $1/(1-\exp(-y))$ are  bounded. The result follows on  the complementary set of $\mathcal V_0$.
Let us now consider $y \in \mathcal V_0$.  Assuming $\vert xy \vert \leq 1$, we get $\vert g_x(y)\vert \leq C\, x^2y^2$
and we conclude using that $y/(1-\exp(-y))$ is bounded on $(-1,\infty)$.\\
If  $y \in \mathcal V_0$ and  $\vert xy \vert \geq 1$, we have 
$$\frac{\vert g_x(y)\vert }{(1-e^{-y})^2}\leq C\left(\frac{\vert 1-e^{-xy}\vert }{y^2}+ \frac{x}{y}\right)\leq Cx^{2}\left(1+e^{x(1-\varepsilon)}\right),$$
which ends the proof. 
\end{proof}

\noindent Let us now prove  the forthcoming inequality \eqref{coriace}. 
\begin{lem}
\label{lemcoriace}
Let $r_1(x)=-x\log(x)$. Then for any $x, x' \in [0,1]$, 
\be
\label{coriace} \vert x\log(x)- x'\log(x')\vert\leq K \big(\vert x - x'\vert +  r_1(\vert x -x '\vert)\big)\ee
for some constant $K>0$.
\end{lem}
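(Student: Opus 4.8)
The plan is to reduce by symmetry to $0 \le x' \le x \le 1$, set $\delta := x - x' \in [0,1]$ (the case $\delta = 0$ being trivial), and split into two regimes according to whether $x'$ is large or small compared with the increment $\delta$. Throughout, write $\phi(t) := t\log t$, so that on $[0,1]$ we have $\phi = -r_1$, and $\phi \in C^1((0,1])$ with $\phi'(t) = 1 + \log t$, hence $|\phi'(t)| \le 1 + |\log t|$; also $r_1(y) = y\,|\log y|$ for $y \in (0,1]$.

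If $x' \ge \delta$, then both endpoints stay away from $0$ and the mean value theorem applies on $[x',x]$: there is $\xi \in (x',x) \subseteq [\delta,1]$ with $|\phi(x) - \phi(x')| = |\phi'(\xi)|\,\delta \le (1 + |\log\xi|)\,\delta \le (1 + |\log\delta|)\,\delta = \delta + r_1(\delta)$, using $\delta \le \xi \le 1$ (so $|\log\xi| \le |\log\delta|$) and the identity $r_1(\delta) = \delta\,|\log\delta|$. This already yields the assertion with $K = 1$ in this regime; note that here automatically $\delta \le 1/2$, so there is no issue with $|\log\delta|$.

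The genuinely delicate case is $x' < \delta$, where $\phi'$ blows up near $0$ and the mean value argument is unavailable. Here $x = x' + \delta < 2\delta$, so both $x, x' \in [0,2\delta)$, and I would bound $|\phi(x) - \phi(x')| \le r_1(x) + r_1(x')$ using that $r_1 \ge 0$, is non-decreasing on $[0,1/e]$, and is bounded by $1/e$ on $[0,1]$. If $2\delta \le 1/e$, monotonicity gives $r_1(x), r_1(x') \le r_1(2\delta) = -2\delta\log 2 - 2\delta\log\delta \le 2\,r_1(\delta)$, hence $|\phi(x) - \phi(x')| \le 4\,r_1(\delta)$. If $2\delta > 1/e$, then $\delta$ is bounded below: when $1/(2e) < \delta \le 1/2$ the quantity $r_1(\delta)$ is bounded below by the positive constant $c_0 := \min_{[1/(2e),\,1/2]} r_1$, and when $\delta > 1/2$ it is $\delta$ itself that is bounded below; in either subcase the crude bound $|\phi(x) - \phi(x')| \le 2/e$ is absorbed into $K(\delta + r_1(\delta))$ once $K$ is large enough. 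Taking $K$ to be the largest of the finitely many constants that appear ($1$, $4$, $2/(ec_0)$, $4/e$) completes the argument.

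I expect no serious obstacle: the only thing that requires attention is that $t \mapsto t\log t$ is not Lipschitz at $0$ — which is precisely why the statement carries the extra modulus term $r_1(|x-x'|)$ rather than being a plain Lipschitz estimate — and the corresponding technical point is the bookkeeping when both $x$ and $x'$ are closer to $0$ than to each other, together with the harmless compactness estimate near the maximum $t = 1/e$ of $r_1$.
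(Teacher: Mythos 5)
Your proof is correct. The first regime is handled exactly as in the paper: when $\min(x,x')\ge |x-x'|$ both of you apply the mean value theorem to $\phi(t)=t\log t$ and bound $|\phi'(\xi)|\le 1+|\log|x-x'||$, which is precisely the term $|x-x'|+r_1(|x-x'|)$. The difference lies in the delicate regime $x'<\delta:=x-x'$ (so $x<2\delta$). The paper disposes of it in one algebraic step, writing $x\log x - x'\log x' = x\log(x/x') + (x-x')\log x'$, bounding $|x\log(x/x')|\le Cx'\le 2C\delta$ via the boundedness of $\alpha\mapsto\alpha\log\alpha$ on $[0,1]$ and $|\log x'|\le|\log\delta|$, which directly gives $2C\delta+\delta|\log\delta|$ with no further case analysis. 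You instead use the crude bound $|\phi(x)-\phi(x')|\le r_1(x)+r_1(x')$, then exploit the monotonicity of $r_1$ on $[0,1/e]$ to get $r_1(x),r_1(x')\le r_1(2\delta)\le 2r_1(\delta)$ when $2\delta\le 1/e$, and absorb the remaining range $2\delta>1/e$ by a compactness/lower-bound argument on $\delta+r_1(\delta)$. Both arguments are sound and elementary; the paper's identity yields an explicit, uniform constant in a single line, whereas your version trades that for slightly more bookkeeping (the extra split at $2\delta=1/e$ and the constant $c_0=\min_{[1/(2e),1/2]}r_1$) but uses only positivity, monotonicity and boundedness of $r_1$, and makes transparent why the modulus $r_1(|x-x'|)$, rather than a Lipschitz bound, is the right correction near $0$.
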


\begin{proof}
Let us first assume that $\min(x,x')\geq |x - x'|$. In this case, it is immediate that
$$ |x \log x - x' \log x'| \leq |x - x'| (1 + \log(|x - x'|))$$
by the mean value theorem.
We now assume that $0\leq x \leq |x - x'| \leq  x'$, which implies that $x'\leq 2 |x - x'|$.  We have 
\ben |x \log x - x' \log x'| &\leq& |x\log(x/x') +  (x - x') \log(x')|\\
&\leq & 
  |x\log(x/x')| + |\log(|x - x'|)|\, |x - x'| \\
& \leq&  x' - x + |\log(|x - x'|)|\, |x - x'|,
\een
using that $x/x'\in [0,1]$ and that the function $\alpha \in  [0,1]\rightarrow \alpha \log \alpha$ is bounded by some constant $C$.
We obtain that $ |x \log x - x' \log x'| \leq 2C |x-x'| + |\log(|x - x'|)|\, |x - x'|$, which ends the proof.
 \end{proof}

\begin{lem} \label{bornefonction} With notation \eqref{expg}
and \eqref{autre}, for any $x_1,\widetilde{x}_1 \in [0,1]$,
$$g_1(x_1,\widetilde{x}_1) \leq L\vert x_1-\widetilde{x_1} \vert$$
and for any $u \in [-1/2,1]$,
$$(g_2(x_1,u)-g_2(\widetilde{x}_1,u))^2 \leq  C\vert x_1-\widetilde{x_1} \vert   u^2.$$
\end{lem}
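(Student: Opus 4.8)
The plan is to prove the two inequalities separately; both reduce to elementary estimates on powers of reals in $[0,1]$ together with the classical bounds $-t\log t\le 1/e$ on $[0,1]$ and $\log z\le z-1$ for $z>0$.

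For the first inequality I would assume, without loss of generality (the case where $x_1$ or $\widetilde x_1$ vanishes being covered by the null extension), that $0<\widetilde x_1\le x_1\le 1$. Then $\min(-\log x_1,-\log\widetilde x_1)=-\log x_1$, $\min(x_1,\widetilde x_1)^2=\widetilde x_1^{\,2}$ and $|\log x_1-\log\widetilde x_1|=\log(x_1/\widetilde x_1)$, so that $g_1(x_1,\widetilde x_1)=(-\log x_1)(x_1-\widetilde x_1)^2+\widetilde x_1^{\,2}\log(x_1/\widetilde x_1)$. For the first summand, $x_1-\widetilde x_1\le x_1$ gives $(x_1-\widetilde x_1)^2\le x_1(x_1-\widetilde x_1)$, hence $(-\log x_1)(x_1-\widetilde x_1)^2\le(-x_1\log x_1)(x_1-\widetilde x_1)\le \tfrac1e(x_1-\widetilde x_1)$. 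For the second summand, $\log(x_1/\widetilde x_1)\le x_1/\widetilde x_1-1$ yields $\widetilde x_1^{\,2}\log(x_1/\widetilde x_1)\le\widetilde x_1(x_1-\widetilde x_1)\le x_1-\widetilde x_1$. Adding, $g_1(x_1,\widetilde x_1)\le(1+1/e)\,|x_1-\widetilde x_1|$, i.e. the claim with $L=1+1/e$.

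For the second inequality I write $g_2(x_1,u)=x_1\bigl(e^{u\log x_1}-1\bigr)=x_1^{\,u+1}-x_1$ and aim at $|g_2(y,u)-g_2(\widetilde x_1,u)|\le 6|u|\sqrt{|y-\widetilde x_1|}$, which gives the claim with $C=36$ after squaring. The key auxiliary estimate is the pointwise bound $|t^u-1|\le 2|u|\,t^{-1/2}$ valid for $t\in(0,1]$ and $u\in[-1/2,1]$. To prove it I would set $s=-\log t\ge0$ and establish $|e^{-us}-1|\le 2|u|e^{s/2}$: for $u\in(0,1]$ the quotient $u\mapsto(1-e^{-us})/u$ is non-increasing (the numerator of its derivative equals $e^{-us}(us+1)-1\le 0$), hence bounded by its limit $s\le2e^{s/2}$ as $u\to0^+$; for $u\in[-1/2,0)$ the quotient $|u|\mapsto(e^{|u|s}-1)/|u|=\int_0^s e^{|u|\sigma}\,d\sigma$ is non-decreasing, hence bounded by its value $2(e^{s/2}-1)\le2e^{s/2}$ at $|u|=1/2$. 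Granting this, $\partial_t g_2(t,u)=(u+1)t^u-1=u\,t^u+(t^u-1)$, and since $t^u\le t^{-1/2}$ for $u\in[-1/2,1]$ and $t\le1$, one gets $|\partial_t g_2(t,u)|\le|u|t^{-1/2}+2|u|t^{-1/2}=3|u|t^{-1/2}$. As this is integrable at $0$ and $g_2(\cdot,u)$ extends continuously to $t=0$ with value $0$, integration yields
$$|g_2(y,u)-g_2(\widetilde x_1,u)|\le 3|u|\Bigl|\int_{\widetilde x_1}^{y}t^{-1/2}\,dt\Bigr|=6|u|\,\bigl|\sqrt y-\sqrt{\widetilde x_1}\bigr|\le 6|u|\sqrt{|y-\widetilde x_1|},$$
the last step being $|\sqrt a-\sqrt b|\le\sqrt{|a-b|}$.

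The only delicate point is the pointwise bound $|t^u-1|\le 2|u|t^{-1/2}$: a crude mean value estimate produces a spurious factor $-\log t$, which is unbounded as $t\to0$, and it is the monotonicity of the difference quotients above — which genuinely uses that $u$ ranges over the bounded interval $[-1/2,1]$ — that removes it. Once this and the resulting bound $|\partial_t g_2(t,u)|\le 3|u|t^{-1/2}$ are in place, no case distinction on the relative sizes of $x_1$ and $\widetilde x_1$ is needed and the rest is routine.
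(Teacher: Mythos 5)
Your proof is correct, and while the first inequality is handled in essentially the paper's spirit (boundedness of $-x\log x$ for the logarithmic factor, an elementary estimate — you use $\log z\le z-1$ where the paper invokes the mean value theorem — for the second term), your treatment of the second inequality follows a genuinely different route. The paper first writes $(g_2(x_1,u)-g_2(\widetilde{x}_1,u))^2\le \vert g_2^2(x_1,u)-g_2^2(\widetilde{x}_1,u)\vert$ (which implicitly uses that $g_2(x_1,u)$ and $g_2(\widetilde{x}_1,u)$ have the same sign for fixed $u$) and then applies the mean value theorem to $y\mapsto g_2^2(y,u)$, whose derivative $2y(e^{u\log y}-1)^2+2uy\,e^{u\log y}(e^{u\log y}-1)$ is bounded by a constant times $u^2$ uniformly in $y$, the extra factor of $y$ absorbing the logarithmic singularity via $\vert e^{u\log y}-1\vert\le \vert u\vert\,\vert\log y\vert$ (and $y\log^2 y$, $y^{1/2}\vert\log y\vert$ bounded). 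You instead work with $g_2$ itself: the pointwise bound $\vert t^u-1\vert\le 2\vert u\vert t^{-1/2}$, obtained by the monotone difference-quotient argument, yields the singular but integrable derivative bound $\vert\partial_t g_2(t,u)\vert\le 3\vert u\vert t^{-1/2}$, which you integrate and combine with $\vert\sqrt a-\sqrt b\vert\le\sqrt{\vert a-b\vert}$. What the paper's route buys is brevity — a one-line Lipschitz estimate on the square; what yours buys is explicit constants ($L=1+1/e$, $C=36$), no need for the same-sign observation, and a careful treatment of $u\in[-1/2,0)$, where the paper's stated inequality $\vert e^{u\log x_1}-1\vert\le \vert\log x_1\vert u$ only applies directly to $u\ge 0$ and needs the boundedness of $\vert u\vert$ to be repaired. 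Both arguments are sound.
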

\begin{proof}
For the first inequality, one can use that $-x\log x$ is bounded for the first term in \eqref{expg}
and the mean value theorem for the second one.\\
Concerning the second inequality, we use
$$(g_2(x_1,u)-g_2(\widetilde{x}_1,u))^2\leq \big\vert g_2(x_1,u)^2-g_2(\widetilde{x}_1,u)^2\big\vert \leq \sup \vert (g_2^2(.,u))' \vert \vert Z_s\vert $$
and 
$$(g_2^2(.,u))'(x_1) =2x_1(e^{\log(x_1)u}-1)^2+ux_1e^{\log(x_1)u}2(e^{\log(x_1)u}-1).$$
The results then come from the inequality $\vert e^{\log(x_1)u}-1 \vert \leq \vert \log(x_1)\vert u$.
\end{proof}

\subsection{Stone-Weierstrass Theorem on locally compact space}
\label{localSW}
We recall here  
the local version of Stone-Weierstrass Theorem and  assume that the space $X$ is a locally compact Hausdorff space.  \\
Let $C_{0}(X,\mathbb{R})$ the space of real-valued continuous  functions  on $X$ which vanish at infinity, i.e.  given $\varepsilon>0$,  there is a compact subset $K$ such that $\|f(x)\|<\varepsilon$ whenever the point $x$ lies outside $K$. In other words, the set $\{x, \|f(x)\|\ge \varepsilon\}$ is compact. \\
 Let us consider a subalgebra $A$ of $C_{0}(X,\mathbb{R})$. 
Then $A$ is dense in $C_{0}(X,\mathbb{R})$ for  the topology of uniform convergence if and only if it separates points and vanishes nowhere.

\section*{Acknowledgments}

This work was partially funded by the Chair "Mod\'elisation Math\'ematique et Biodiversit\'e" of VEOLIA-Ecole Polytechni\-que-MnHn-FX
and by the  ANR ABIM  16-CE40-0001.


\begin{thebibliography}{20}

 \bibitem{Alssurvey} G. Alsmeyer.  \newblock Bisexual Galton Watson processes : a survey.  \newblock {\em Available via https://www.uni-muenster.de/Stochastik/alsmeyer/bisex(survey).pdf}.


\bibitem{BaP} M. Ba and E. Pardoux.
\newblock  Branching processes with interaction and a generalized Ray-Knight theorem. 
\newblock \emph{Ann. Inst. Henri Poincar\'e Probab. Stat}. 51 (2015), no.4, 1290--1313. 
  
\bibitem{BPS} V. Bansaye, J. C. Pardo Millan and C. Smadi. On the extinction of continuous state branching processes
with catastrophes.  \emph{Electron. J. Probab.} 18 (2013),  no.106.

\bibitem{BS}
V. Bansaye and F. Simatos.
\newblock On the scaling limits of {G}alton-{W}atson processes in varying
  environment.
\newblock \emph{Electron. J. Probab.} 20 (2014), no 75.

\bibitem{BK}
J. Bertoin and I. Kortchemsky.
\newblock Self-similar scaling limits  of Markov chains on the positive integers.
\newblock {\em  Ann. Appl. Probab.}  26 (2016), no.4, 2556--2595.


\bibitem{BH}
C. Boeinghoff and M. Hutzenthaler.
\newblock Branching diffusions in random environment.
\newblock {\em Markov Process. Related Fields}, 18 (2012), no.2, 269--310.

\bibitem{Bor}
K.~Borovkov.
\newblock A note on diffusion-type approximation to branching processes in
  random environments.
\newblock  {\em  Teor. Veroyatnost. i Primenen} 47 (2002), no.1, 183--188; translation in {\em Theory Probab. Appl.} 47 (2003), no.1, 132--138.

\bibitem{CLU}
M. E. Caballero, A. Lambert  and G. Uribe Bravo.
\newblock Proof(s) of the Lamperti representation of continuous-state branching processes 
{\em Probab. Surveys} 6 (2009), 62--89.

\bibitem{CPU}
M. E. Caballero, J.L. P\'erez  and G. Uribe Bravo.
\newblock A Lamperti-type representation of continuous-state branching processes with immigration
{\em  Ann. Probab.}  41 (2013), no. 3A, 1585--1627.


\bibitem{DL}  D. A. Dawson and Z. Li. \newblock Stochastic equations, flows and measure-valued processes.
\newblock {\em  Ann. Probab.} 40 (2012), no. 2, 813--857.

\bibitem{PD} I. Dram\'e and E. Pardoux. 
\newblock Approximation of a generalized CSBP with interaction.
\newblock {\em  Electron. Commun. Probab.}, (2018). 

\bibitem{EK}  S. N. Ethier and T. G. Kurtz. \newblock {\em Markov processes. Characterization and convergence.} 
\newblock Wiley Series in Probability and Mathematical Statistics: Probability and Mathematical Statistics, New York, 1986. 

\bibitem{Foucart} C. Foucart. Continuous-state branching processes with competition: Duality and Reflection at Infinity. \newblock {\em Available via }
https://arxiv.org/abs/1711.06827, 2017.

\bibitem{FL}  Z. Fu and Z. Li. \newblock Stochastic equations of non-negative processes with jumps.
\newblock
 {\em Stochastic Process. Appl.}, 120 (2010), no.3, 306--330.

 \bibitem{GMD}  M. Gonz\'alez, M.  Molina M. and I. Del Puerto.
 \newblock
 On L2-convergence of controlled branching processes with random control function.
\newblock {\em Bernoulli}, 11 (2005), no.1, 37--46.

\bibitem{Grimvall}
A. Grimvall.
\newblock On the convergence of sequences of branching processes.
\newblock {\em Ann. Probab.}  2 (1974), 1027--1045.

\bibitem{ZL} H. He, Z. Li, W. Xu. \newblock Continuous-state branching processes in Levy random environments 
\newblock {\em J. Theor. Probab.} (2018), p. 1--23. 

\bibitem{IK}
N. Ikeda and S. Watanabe.
\newblock \emph{Stochastic differential equations and diffusion processes. 2nd ed}.
\newblock North-Holland, 1989.

\bibitem{Jacod}
J. Jacod 	and A.S. Shiryaev.
\newblock \emph{Limit theorems for stochastic processes}.
\newblock 2nd Edition. Springer 2002.

\bibitem{Kosenkova}
T.I. Kosenkova.
\newblock Weak convergence of a series scheme of Markov chains to the solution of a L\'evy driven SDE. 
\newblock {\em Theory Stoch. Process.} 18 (2012), no.1, 86--100.

\bibitem{Kurtz78}
T.~G. Kurtz.
\newblock Diffusion approximations for branching processes.
\newblock In {\em Branching processes ({C}onf., {S}aint {H}ippolyte, {Q}ue.,
  1976)}, vol.~5 of {\em Adv. Probab. Related Topics}, p. 269--292.
  Dekker, New York, 1978.
 
 \bibitem{Andreasal}
A. Kyprianou, S. W. Pagett, T.  Rogers and J. Schweinsberg. A phase transition in excursions from infinity of the "fast'' fragmentation-coalescence process. \emph{Ann. Probab.} 45 (2017), no. 6A, 3829--3849.

 
 \bibitem{lam05}
A. Lambert.
\newblock The branching process with logistic growth. 
\newblock {\em Ann. Appl. Probab.}, 15  (2005), no.2, 150--1535.
 

\bibitem{Lamp}
J. Lamperti.
\newblock Continuous state branching processes.
\newblock {\em Bull. Amer. Math. Soc.}, 73 (1967), 382--386.

\bibitem{Lamp2}
J. Lamperti.
\newblock The limit of a sequence of branching processes.
\newblock {\em Z. Wahrscheinlichkeitstheorie und Verw. Gebiete}, 7 (1967), 271--288.
  
  \bibitem{Lampstable}
  J. W. Lamperti. \newblock Semi-stable stochastic processes.
  \newblock {\em  Trans. Amer. Math. Soc.} 104 (1962), 62--78.
  
  \bibitem{PW} V. Le, E. Pardoux and A. Wakolbinger. \newblock
Trees under attack: a Ray-Knight representation of Feller's branching diffusion with logistic growth.
\newblock{\emph{Probab. Theory Relat. Fields}},155 (2013), 583--619.

  \bibitem{li}
  Z. Li and F. Pu.
  \newblock Strong solutions of jump-type stochastic equations.
  \newblock \emph{Electronic Communications in Probability} 17 (2012), no.3, 1--13.

\bibitem{Li} P.-S. Li. A continuous-state nonlinear branching process.
\newblock { \em Available Arxiv  : https://arxiv.org/abs/1609.09593}

\bibitem{Mode} 
C. J. Mode. \newblock {\em Multitype branching processes. Theory and applications. }
\newblock Modern
Analytic and Computational Methods in Science and Mathematics, No. 34. American Elsevier Publishing Co., Inc., New York, 1971.

\bibitem{PP} S.  Palau and J-C. Pardo Millan. \newblock Branching processes in a L\'evy random environment. \newblock { \em  Acta Mathematica Applicandae,} 153 (2018), no.1,  55--79.

\bibitem{Pardoux}
E. Pardoux. \emph{Probabilistic models of population evolution. Scaling limits, genealogies and interactions. }  Mathematical Biosciences Institute Lecture Series. Stochastics in Biological Systems, 1.6. Springer, MBI Mathematical Biosciences Institute, Ohio State University, Columbus, OH, 2016.

\bibitem{Rosen} G. Rosenkranz. \newblock 
Diffusion approximation of controlled branching processes with random environments. 
\newblock {\em Stoch. Anal. Appl.} 3 (1985), 363--377. 
\end{thebibliography}
\end{document}